\documentclass[twoside,11pt]{article}

%

%
%
%
\usepackage{amsmath, mathtools}
\usepackage{smile}
\usepackage[linesnumbered,ruled,vlined]{algorithm2e}
\usepackage{tikz}
\usepackage{bbm}
\usetikzlibrary{shapes}
\usetikzlibrary{plotmarks}
\usepackage{xcolor}
\usepackage{capt-of}
\usepackage{fullpage}
\usepackage[colorlinks=true,
            linkcolor=blue,
            urlcolor=blue,
            citecolor=blue]{hyperref}

\usepackage[colorinlistoftodos]{todonotes}

\newcommand{\diam}{\operatorname{diam}}


\SetKwInput{KwInput}{Input}                
\SetKwInput{KwOutput}{Output}              
\SetKwRepeat{KwRepeat}{repeat}{until}

\def\T{{ ^\mathrm{\scriptscriptstyle T} }}



\setlength{\marginparwidth}{2cm}



\begin{document}

\begin{center}
{\LARGE On the minimax rate of the Gaussian sequence model under bounded convex constraints}

{\large
\begin{center}
Matey Neykov
\end{center}}

{Department of Statistics \& Data Science\\ Carnegie Mellon University\\Pittsburgh, PA 15213\\[2ex]\texttt{mneykov@stat.cmu.edu}}
\end{center}

\begin{abstract} 
We determine the exact minimax rate of a Gaussian sequence model under bounded convex constraints, purely in terms of the local geometry of the given constraint set $K$. Our main result shows that the minimax risk (up to constant factors) under the squared $\ell_2$ loss is given by $\varepsilon^{*2} \wedge \diam(K)^2$ with
\begin{align*}
    \varepsilon^* = \sup \bigg\{\varepsilon : \frac{\varepsilon^2}{\sigma^2} \leq \log M^{\operatorname{loc}}(\varepsilon)\bigg\},
\end{align*}
where $\log M^{\operatorname{loc}}(\varepsilon)$ denotes the local entropy of the set $K$, and $\sigma^2$ is the variance of the noise. We utilize our abstract result to re-derive known minimax rates for some special sets $K$ such as hyperrectangles, ellipses, and more generally quadratically convex orthosymmetric sets. Finally, we extend our results to the unbounded case with known $\sigma^2$ to show that the minimax rate in that case is $\varepsilon^{*2}$.
\end{abstract}

\section{Introduction}

This paper focuses on the Gaussian sequence model $Y_i = \mu_i + \xi_i$ with $n$ observations (i.e., $i \in \{1,\ldots,n\}$), where $\xi_i \sim N(0,\sigma^2)$ are independent and identically distributed (i.i.d.), and the vector $\mu \in \mathbb{R}^n$ belongs to a known bounded convex set $K$. In particular we would like to determine the minimax rate for this problem. In detail, we would like to quantify (up to proportionality constants) the rate of the following expression, also known as the minimax risk:
\begin{align}\label{minimax:risk}
    \inf_{\hat \nu} \sup_{\mu \in K} \mathbb{E} \|\hat \nu(Y) - \mu\|^2,
\end{align}
where the infimum is taken with respect to all measurable functions (estimators) of the data, and we use the shorthand $\|\cdot\|$ for the Euclidean norm. The minimax risk may appear to be overly pessimistic to some, but everyone will agree that it represents an important measure of the difficulty of the problem. The main contribution of this work is establishing matching (up to constants) upper and lower bounds for the risk \eqref{minimax:risk} for any bounded convex set  $K$. In particular we would like to single out the upper bound as the main contribution, as the lower bound is a simple consequence of Fano's inequality. In order to establish the upper bound, we demonstrate that there exists a universal scheme which attains the minimax rate for any bounded convex set $K$. The existence of such a general scheme should not be a priori obvious, nonetheless we show it does exist. In order to do that we rely on techniques first proposed by \cite{lecam1973convergence, birge1983approximation}. That being said, while our result may be expected from these works, it is important to note that it cannot be directly derived by using any previously known results. In their work, \cite{lecam1973convergence, birge1983approximation} metrize the probability space using the squared Hellinger distance, and their loss function between the estimate and the true parameter is also based on the squared Hellinger distance. For two multivariate Gaussians $N(\nu_1, \sigma^2 \mathbb{I})$ and $N(\nu_2, \sigma^2 \mathbb{I})$ the squared Hellinger distance is given by $1 - \exp\bigg(-\frac{\|\nu_1 - \nu_2\|^2}{8\sigma^2}\bigg)$ \citep{pardo2018statistical}. This is markedly distinct from the Euclidean norm of the mean difference $\|\nu_1 - \nu_2\|$ which is what we use to metrize the problem, and results in a more natural loss function for the Gaussian sequence model. In particular, the squared Hellinger distance behaves like $\frac{\|\nu_1 - \nu_2\|^2}{8 \sigma^2}$ when $\|\nu_1 - \nu_2\|$ is ``small'', but is of constant order when $\|\nu_1 - \nu_2\|$ is ``large''. This difference renders it impossible to use directly previously known results. In addition, the estimators used by \cite{lecam1973convergence, birge1983approximation} are rather involved, and use pairwise testing on Hellinger balls. In contrast the estimator we propose in this work, does not involve such complicated pairwise tests; it does however, involve delicate constructions of packing sets. We would like to be upfront in that in this work we do not propose a fully satisfactory resolution of this problem for any bounded convex set $K$, as our general algorithm, although very simple to state presents substantial implementational challenges, and is not computationally tractable. We further extend our result to the unbounded case with known variance of the noise.

The constrained Gaussian sequence model setting has numerous applications. For instance, in the special case when the set $K$ is an ellipse, \cite{wei2020gauss} show two examples --- one of constrained ridge regression with fixed design, and one of nonparametric regression with reproducing kernels which can both be viewed through the Gaussian sequence model perspective. In addition, functional regression with shape-constraints, such as isotonic regression or convex regression can often be viewed through the sequence model lens  \citep[see, e.g.][and references therein]{bellec2018sharp, guntuboyina2018nonparametric}. In the latter literature often times a preferred estimator is the constrained least squares estimator (LSE), which is known to be minimax optimal in some settings. Additional examples of how the Gaussian sequence model encompasses different models are given in \cite{chatterjee2014new}, where the author illustrates how both constrained LASSO with fixed design and isotonic regression can be thought of as sequence models under convex constraints. He also shows that unfortunately the LSE is not minimax optimal in general, as there exist convex sets where the gap between the minimax rate and the performance of the LSE can be as large as $\sqrt{n}$ (on the squared risk scale when $\sigma = 1$). This counterexample naturally leads \cite{chatterjee2014new} to ask the question ``as to whether there is a general estimator that is guaranteed to be minimax up to a universal constant''. Hence the need arises to find other estimators which always enjoy minimaxity.



\subsection{Related Literature}

There is a tremendous amount of work on the Gaussian sequence model. Here we will only scratch the surface. The interested reader can consult with books on the sequence model and nonparametric statistics such as \cite{johnstone2011gaussian, nemirovski1998lectures, tsybakov2009introduction}.

In one of the most classical results, \cite{pinsker1980optimal} showed the precise linear minimax rate when the set $K$ is an ellipse, and in fact he showed that a linear estimate achieves the minimax rate when $\sigma \rightarrow 0$. Pinsker's results are valid in a framework more general than the one we consider in this paper as he looked at ellipses in the $\ell_2$ space, whereas we consider only subsets of $\mathbb{R}^n$. When $n = 1$ any bounded convex set is an interval and in that sense the works of \cite{casella1981estimating, bickel1981minimax, ibragimov1985nonparametric} are very relevant. We will later see when we consider the example of hyperrectangles that we are able to recover their result up to constant factors. In a classic work, \cite{donoho1990minimax} consider almost the exact same problem as we consider here (with $\ell_2$ instead of $\mathbb{R}^n$) and work out a variety of special cases for $K$ --- such as hyperrectangles, ellipses, and orthosymmetric quadratically convex sets. They show that a linear projection estimator (also known as the truncated series estimator) is minimax optimal up to constants in all of these examples. We will re-derive all of their results (up to constants) in the Examples section to follow. \cite{javanmard2012minimax} derive the minimax rate for symmetric convex polytopes up to logarithmic factors using the truncated series estimator. \cite{javanmard2012minimax} also point out in their introduction, that ``it is still largely unkown how to compute the minimax risk for an arbitrary convex body''. \cite{zhang2013nearly} obtains the minimax rate up to a logarithmic factor for $\ell_q$ balls for $q \leq 1$, by using an estimator which is a mixture of LSE and a linear projection estimator. \cite{chen2017note} extend results of \cite{chatterjee2014new} to show that the LSE and other regularized estimators are admissible up to universal constants in the same setting that we consider. We will see later on that our estimator, although of different nature than the aforementioned ones, also has this property due to the fact that it is minimax up to constant factors. In a recent paper, \cite{ermakov2020minimax} shows that the linear minimax risk in the sequence model in $\ell_2$ can be explicitly quantified for certain convex sets of the form $K = \{x = \{x_i\}_{i = 1}^{\infty} : \sup_k a_k^{-1} \sum_{j = k}^{\infty} x_j^2 \leq P_0\}$ with $a_k > 0$ being a decreasing sequence. Moreover, \cite{ermakov2020minimax} shows that the asymptotic minimax risk when $a_k = k^{-2\alpha}$ can be precisely quantified as well.\\
\indent Aside from the aforementioned works which focus on the Gaussian sequence model, we would like to discuss the celebrated paper of \cite{yang1999information} which is also highly relevant (yet does not consider the sequence model per se). \cite{yang1999information} based their work on the premise that local entropy is hard to calculate in general, yet it had been shown that it leads to optimal rates of convergence by \cite{lecam1973convergence, birge1983approximation} in certain problems metrized with the squared Hellinger distance as we alluded to previously. Therefore \cite{yang1999information} proposed to study the global entropy instead, which is often easier to handle. We must agree, that local entropy (see Definition \ref{local:entropy:def}) is a challenging quantity to work with, nevertheless, as our result shows it is precisely what is needed to calculate in order to determine the minimax rate for a general convex set $K$. This is also easy to explain intuitively at this point of the paper even without going into the mathematical details. Consider, e.g., the case where the set $K$ is unbounded, e.g., $K$ is a subspace (which corresponds to the linear regression setting). The global entropy of such a set is not even defined (as one cannot pack an unbounded set), yet its local entropy is well defined and calculable. 
We would also further comment that for some sets $K$ it is sufficient to calculate the global entropy as it is of the same order as the local entropy. In fact, \cite{yang1999information} offer a result (see Lemma 3 in Section 7 therein), which connects the local and global entropies. Sometimes, the order of the two quantities coincides, in which case one may resort to calculating the global entropy of $K$ instead. See also Subsection \ref{yang:barron:section:exmaple} where we illustrate this by considering the example of an $\ell_1$ ball. 

\subsection{Organization}

The paper is structured as follows. We present our main results on bounded convex sets $K$ in Section \ref{main:results:section}. Section \ref{examples:section} is dedicated to some examples. Section \ref{adaptivity:sec} argues that the estimator defined in Section \ref{main:results:section} is adaptive to the true point, and it also is admissible up to a universal constant. Section \ref{unbounded:sets:section} extends our main results from the bounded case to the unbounded case with known $\sigma^2$. A brief discussion is given in Section \ref{discussion:section}.

\subsection{Notation}

We outline some commonly used notation here. We use $\vee$ and $\wedge$ for $\max$ and $\min$ of two numbers respectively. Throughout the paper $\|\cdot\|$ denotes the Euclidean norm. Constants may change values from line to line. For an integer $m\in \mathbb{N}$ we use the shorthand $[m] = \{1,\ldots,m\}$. We use $B(\theta,r)$ to denote a closed Euclidean ball centered at the point $\theta$ with radius $r$. We use $\lesssim$ and $\gtrsim$ to mean $\leq$ and $\geq$ up to absolute constant factors, and for two sequences $a_n$ and $b_n$ we write $a_n \asymp b_n$ if both $a_n \lesssim b_n$ and $a_n \gtrsim b_n$ hold. Throughout the paper we use $\log$ to denote the natural logarithm. 


\section{Main Results} \label{main:results:section}

Here we focus on the following problem. We observe $n$ observations (i.e., $i \in [n]$) $Y_i = \mu_i + \xi_i$, where $\mu \in K$, for $K$ being a bounded convex set and $\xi_i \sim N(0,\sigma^2)$ are i.i.d. random variables. We begin with showing a lower bound.  

\subsection{Lower Bound}

In this subsection we present our main lower bound. It is a simple consequence of Fano's inequality, which we state below for the convenience of the reader. Throughout this section and the rest of the paper $c > 0$ is some sufficiently large absolute constant.

\begin{lemma}[Fano's inequality] Let $\mu^1, \ldots, \mu^m$ be a collection of $\varepsilon$-separated points in the parameter space in Euclidean norm. Suppose $J$ is uniformly distributed over the index set $[m]$, and $(Y | J = j) = \mu^j + \xi$ for $\xi \sim N(0, \mathbb{I}\sigma^2)$. Then 
\begin{align*}
    \inf_{\hat \nu} \sup_{\mu} \mathbb{E} \|\hat \nu(Y) - \mu\|^2 \geq \frac{\varepsilon^2}{4}\bigg(1 - \frac{I(Y; J) + \log 2}{\log m}\bigg).
\end{align*}

\end{lemma}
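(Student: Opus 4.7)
My plan is to follow the standard minimax-to-testing reduction, and then invoke the information-theoretic Fano inequality in its classical form.

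First, I would lower bound the minimax risk by the Bayes risk under the uniform prior on the $\varepsilon$-separated points:
\begin{align*}
   \inf_{\hat\nu} \sup_{\mu \in K} \mathbb{E}\|\hat\nu(Y) - \mu\|^2
   \;\geq\; \inf_{\hat\nu} \frac{1}{m}\sum_{j=1}^m \mathbb{E}\|\hat\nu(Y) - \mu^j\|^2
   \;=\; \inf_{\hat\nu} \mathbb{E}\|\hat\nu(Y) - \mu^J\|^2,
\end{align*}
where $J$ is uniform on $[m]$. Next, given any estimator $\hat\nu$, define the minimum-distance decoder $\hat J = \arg\min_{j \in [m]} \|\hat\nu(Y) - \mu^j\|$ (breaking ties arbitrarily). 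Since the $\mu^j$ are $\varepsilon$-separated, the triangle inequality shows that whenever $\hat J \neq J$, one must have $\|\hat\nu(Y) - \mu^J\| \geq \varepsilon/2$; indeed, otherwise $\|\mu^{\hat J} - \mu^J\| \leq \|\mu^{\hat J} - \hat\nu(Y)\| + \|\hat\nu(Y) - \mu^J\| < \varepsilon$, contradicting the separation. Therefore
\begin{align*}
    \|\hat\nu(Y) - \mu^J\|^2 \;\geq\; \frac{\varepsilon^2}{4}\,\mathbf{1}\{\hat J \neq J\},
\end{align*}
and taking expectations yields $\mathbb{E}\|\hat\nu(Y) - \mu^J\|^2 \geq (\varepsilon^2/4)\,\mathbb{P}(\hat J \neq J)$.

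The second step is to bound $\mathbb{P}(\hat J \neq J)$ from below by the classical Fano inequality applied to the Markov chain $J \to Y \to \hat J$. Since $\hat J$ takes values in $[m]$, Fano's inequality gives
\begin{align*}
   H(J \mid \hat J) \;\leq\; \log 2 + \mathbb{P}(\hat J \neq J)\log m,
\end{align*}
while $H(J \mid \hat J) = H(J) - I(J;\hat J) \geq \log m - I(Y; J)$ by the uniformity of $J$ and the data-processing inequality. Rearranging produces
\begin{align*}
   \mathbb{P}(\hat J \neq J) \;\geq\; 1 - \frac{I(Y;J) + \log 2}{\log m},
\end{align*}
and combining this with the previous display gives the stated bound.

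There is no real obstacle here; the only mild care needed is the triangle-inequality argument that turns $\varepsilon$-separation into the quadratic $\varepsilon^2/4$ lower bound for the testing-error loss, and ensuring that the data-processing step is applied to the correct Markov chain. The role of the Gaussian noise is entirely hidden inside the information quantity $I(Y;J)$, which the lemma intentionally leaves unspecified so that it may later be bounded in terms of KL divergences between $N(\mu^i,\sigma^2\mathbb{I})$ and $N(\mu^j,\sigma^2\mathbb{I})$ when applied to the packing construction.
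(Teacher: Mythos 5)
Your proof is correct and is exactly the standard reduction (minimax to Bayes risk, Bayes risk to testing error via the minimum-distance decoder and the $\varepsilon/2$ triangle-inequality argument, then the information-theoretic Fano inequality with data processing) that the paper relies on when it states this lemma without proof, citing \cite{wainwright2019high}. No issues.
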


In the above $I(Y;J)$ is the mutual information between $Y$ and $J$, and can be upper bounded by $\frac{1}{m} \sum_{j} D_{KL}(\mathbb{P}_{\mu^j} || \mathbb{P}_{\nu}) = \frac{1}{m} \sum_{j} \frac{\|\mu^j - \nu\|^2}{2\sigma^2} \leq \max_j \frac{\|\mu^j - \nu\|^2}{2\sigma^2}$ for any $\nu \in \RR^n$ (see (15.52) \cite{wainwright2019high} e.g.). We will now define local packing entropy.

\begin{definition}[Local Entropy]\label{local:entropy:def} Let $\theta \in K$ be a point. Consider the set $B(\theta, \varepsilon) \cap K$. Let $M(\varepsilon/c, B(\theta, \varepsilon) \cap K)$ denote the largest cardinality of an $\varepsilon/c$ packing set \citep[see Defintion 5.4][e.g., for a definition of a packing set]{wainwright2019high} in $B(\theta, \varepsilon) \cap K$. Let \begin{align*}
    M^{\operatorname{loc}}(\varepsilon) = \sup_{\theta \in K} M(\varepsilon/c, B(\theta, \varepsilon) \cap K).
\end{align*} We refer to $\log M^{\operatorname{loc}}(\varepsilon)$ as local entropy of $K$. Sometimes we will use $M^{\operatorname{loc}}_K(\varepsilon)$ if we the set $K$ is not clear from the context.
\end{definition}

\begin{lemma} We have
\begin{align*}
     \inf_{\hat \nu} \sup_{\mu} \mathbb{E} \|\hat \nu(Y) - \mu\|^2 \geq \frac{\varepsilon^2}{8 c^2},
\end{align*}
for any $\varepsilon$ satisfying $\log M^{\operatorname{loc}}(\varepsilon) > 4(\varepsilon^2/(2\sigma^2) \vee \log 2)$, where $c$ is the constant from the Definition \ref{local:entropy:def} which is fixed to some large enough value.
\end{lemma}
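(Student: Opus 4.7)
The plan is to apply Fano's inequality to a \emph{local} packing rather than a global one, which is precisely what the statement of the lemma is designed for. First, I would invoke Definition \ref{local:entropy:def} to choose a point $\theta \in K$ together with an $\varepsilon/c$-separated set $\mu^1,\ldots,\mu^m \subseteq B(\theta,\varepsilon) \cap K$ of cardinality $m = M^{\operatorname{loc}}(\varepsilon)$. Since this supremum is over integer cardinalities it is attained whenever finite, and if it is infinite the conclusion is immediate by taking $m$ arbitrarily large. The key structural feature is that all $\mu^j$ lie within a common ball of radius $\varepsilon$, while still being well separated at scale $\varepsilon/c$.

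Next, I would instantiate Fano's inequality with $\mu^1,\ldots,\mu^m$ as the parameters and with $J$ uniform on $[m]$. To bound the mutual information I would use the displayed bound stated just after Fano's inequality in the excerpt, namely
\begin{align*}
    I(Y;J) \leq \max_j \frac{\|\mu^j - \nu\|^2}{2\sigma^2}
\end{align*}
for any $\nu$, and choose $\nu = \theta$. Because the packing is \emph{local}, every $\mu^j$ satisfies $\|\mu^j - \theta\| \leq \varepsilon$, and hence $I(Y;J) \leq \varepsilon^2/(2\sigma^2)$. This is the step that would fail for a purely global packing argument, where the relevant KL bound would scale with $\diam(K)^2/\sigma^2$ instead.

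Finally, plugging the separation $\varepsilon/c$ and the above mutual information bound into Fano's inequality yields
\begin{align*}
    \inf_{\hat \nu} \sup_{\mu} \mathbb{E} \|\hat \nu(Y) - \mu\|^2 \geq \frac{\varepsilon^2}{4c^2}\bigg(1 - \frac{\varepsilon^2/(2\sigma^2) + \log 2}{\log M^{\operatorname{loc}}(\varepsilon)}\bigg).
\end{align*}
The hypothesis $\log M^{\operatorname{loc}}(\varepsilon) > 4(\varepsilon^2/(2\sigma^2) \vee \log 2)$ gives $\log M^{\operatorname{loc}}(\varepsilon) > 2(\varepsilon^2/(2\sigma^2) + \log 2)$, so the parenthesized factor is at least $1/2$, producing the claimed lower bound $\varepsilon^2/(8c^2)$. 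There is no real obstacle in this proof; it is essentially a bookkeeping exercise assembling the Fano bound, the Gaussian KL formula, and the local packing definition. The only thing to be mildly careful about is the attainment of the supremum defining $M^{\operatorname{loc}}(\varepsilon)$, which, as noted, causes no difficulty.
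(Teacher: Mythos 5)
Your proposal is correct and follows essentially the same route as the paper: a local $\varepsilon/c$-packing inside $B(\theta,\varepsilon)\cap K$, the mutual information bounded by $\varepsilon^2/(2\sigma^2)$ via the KL divergence to the center $\theta$, and Fano's inequality with the observation that the stated hypothesis implies $\log M^{\operatorname{loc}}(\varepsilon) > 2(\varepsilon^2/(2\sigma^2)+\log 2)$. The paper's proof is merely a terser version of the same argument, so there is nothing to add.
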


\begin{proof}
For a given $\varepsilon$ we can build an $\varepsilon/c$-local packing of cardinality  $M^{\operatorname{loc}}(\varepsilon)$,  around some point of $K$. If such a point does not exist, we can take a sequence of points which achieve this in the limit, which is good enough for our argument to follow. Suppose that $\log M^{\operatorname{loc}}(\varepsilon) > 2(\varepsilon^2/(2\sigma^2) + \log 2)$. From Fano's inequality it immediately follows that the minimax risk is at least $\frac{\varepsilon^2}{8c^2}$. The above is implied when $\log M^{\operatorname{loc}}(\varepsilon) > 4(\varepsilon^2/(2\sigma^2) \vee \log 2)$.
\end{proof}

\subsection{Upper Bound}\label{upper:bound:section}

In this subsection we focus on the upper bound. Let $d = \operatorname{diam}(K)$. We propose the estimator described in Algorithm \ref{test}, where $2 (C + 1) = c$ is the constant from the definition of local entropy which is assumed to be sufficiently large. The reader will notice that our algorithm contains an infinite loop. This means that our estimator can only be achieved in theory. The good news is that if one knows a lower bound on $\sigma$ (including cases when one knows $\sigma$ exactly), one need not run the procedure ad infinitum. In that case the number of iterations can be determined through a concentration result to follow. We give an updated algorithm with finitely many iterations and additional details of this in Appendix \ref{appendix:A}.

In order to ease the reader into Algorithm \ref{test}, we also outline in plain English how the first few iterations work. For simplicity we will describe the algorithm as if the packing sets are selected during the estimation process, but they should be constructed prior to seeing the data. At first we select an arbitrary point $\nu^* \in K$. Then we consider the ball $B(\nu^*, d) \cap K = K$, and we take a maximal packing set at $\frac{d}{2(C+1)} = \frac{d}{c}$ distance. Let $M_1$ denote the corresponding maximal packing set. Reassign $\nu^*$ to be the closest point to $Y$ from the set $M_1$ in Euclidean distance, i.e., let $\nu^* = \argmin_{\nu \in M_1} \|Y - \nu\|$. Consider the set $B(\nu^*, d/2)\cap K$ and its maximal packing set at a distance $\frac{d}{4(C+1)} = \frac{d}{2c}$ and call it $M_2$. Once again reassign $\nu^* = \argmin_{\nu \in M_2} \|Y - \nu\|$. For the next step consider the set $B(\nu^*, d/4)\cap K$ and its maximal packing at a distance $\frac{d}{8(C+1)} = \frac{d}{4c}$ and call it $M_3$. Reassign $\nu^* = \argmin_{\nu \in M_3} \|Y - \nu\|$. Figure \ref{fig:diagram} illustrates these three steps. Continue the process and output the limiting point.

Before we proceed, we pause to observe a quick fact about the packing sets that are introduced in Algorithm \ref{test}. It is simple to see that if one takes the union of all points from the packing sets on all levels, these points form a countable dense subset of $\overline K$ which is the closure of $K$, and hence any point in $\overline K$ is potentially achievable in the limit. This means that if $K$ is not closed our estimator may not be proper (i.e., it may output points outside of $K$, but the estimator will always be a limiting point of points in $K$ at worst). Furthermore, as we will see later (see the proof of Lemma \ref{compact:set:estimators}) if the point $Y \in \overline K$, Algorithm \ref{test} will always output the point $Y$. The latter is clearly a desirable property, since when $\sigma = 0$, one needs to pick the observed point to achieve minimaxity, and our estimator is not given knowledge of $\sigma$.

\begin{figure}[ht!]
    \centering
\tikzset{every picture/.style={line width=0.75pt}} 

\begin{tikzpicture}[x=0.65pt,y=0.65pt,yscale=-1,xscale=1]

\draw  [color={rgb, 255:red, 0; green, 0; blue, 0 }  ,draw opacity=1 ][fill={rgb, 255:red, 0; green, 0; blue, 0 }  ,fill opacity=1 ] (245.52,289.76) .. controls (245.52,288.66) and (246.42,287.76) .. (247.52,287.76) .. controls (248.62,287.76) and (249.52,288.66) .. (249.52,289.76) .. controls (249.52,290.87) and (248.62,291.76) .. (247.52,291.76) .. controls (246.42,291.76) and (245.52,290.87) .. (245.52,289.76) -- cycle ;
\draw  [color={rgb, 255:red, 208; green, 2; blue, 27 }  ,draw opacity=1 ] (127.1,274.41) .. controls (115.26,260.26) and (134.51,224.67) .. (170.08,194.9) .. controls (205.66,165.13) and (244.09,152.47) .. (255.93,166.61) .. controls (267.76,180.76) and (248.52,216.35) .. (212.94,246.12) .. controls (177.37,275.89) and (138.93,288.55) .. (127.1,274.41) -- cycle ;
\draw  [color={rgb, 255:red, 0; green, 0; blue, 124 }  ,draw opacity=1 ][dash pattern={on 5.63pt off 4.5pt}][line width=1.5]  (19.38,222.32) .. controls (19.38,128.26) and (95.63,52) .. (189.7,52) .. controls (283.77,52) and (360.02,128.26) .. (360.02,222.32) .. controls (360.02,316.39) and (283.77,392.64) .. (189.7,392.64) .. controls (95.63,392.64) and (19.38,316.39) .. (19.38,222.32) -- cycle ;
\draw [color={rgb, 255:red, 0; green, 0; blue, 124 }  ,draw opacity=1 ][line width=1.5]  [dash pattern={on 1.69pt off 2.76pt}]  (158.5,55) -- (191.51,220.51) ;
\draw  [color={rgb, 255:red, 0; green, 0; blue, 124 }  ,draw opacity=1 ][fill={rgb, 255:red, 0; green, 0; blue, 124 }  ,fill opacity=1 ] (189.51,220.51) .. controls (189.51,219.41) and (190.41,218.51) .. (191.51,218.51) .. controls (192.62,218.51) and (193.51,219.41) .. (193.51,220.51) .. controls (193.51,221.61) and (192.62,222.51) .. (191.51,222.51) .. controls (190.41,222.51) and (189.51,221.61) .. (189.51,220.51) -- cycle ;
\draw  [color={rgb, 255:red, 208; green, 2; blue, 27 }  ,draw opacity=1 ][fill={rgb, 255:red, 208; green, 2; blue, 27 }  ,fill opacity=1 ] (224.52,215.76) .. controls (224.52,214.66) and (225.42,213.76) .. (226.52,213.76) .. controls (227.62,213.76) and (228.52,214.66) .. (228.52,215.76) .. controls (228.52,216.87) and (227.62,217.76) .. (226.52,217.76) .. controls (225.42,217.76) and (224.52,216.87) .. (224.52,215.76) -- cycle ;
\draw  [color={rgb, 255:red, 208; green, 2; blue, 27 }  ,draw opacity=1 ][fill={rgb, 255:red, 208; green, 2; blue, 27 }  ,fill opacity=1 ] (241.5,189) .. controls (241.5,187.9) and (242.4,187) .. (243.5,187) .. controls (244.6,187) and (245.5,187.9) .. (245.5,189) .. controls (245.5,190.1) and (244.6,191) .. (243.5,191) .. controls (242.4,191) and (241.5,190.1) .. (241.5,189) -- cycle ;
\draw [color={rgb, 255:red, 208; green, 2; blue, 27 }  ,draw opacity=1 ][line width=1.5]  [dash pattern={on 1.69pt off 2.76pt}]  (164.52,220.76) -- (152.5,245) ;
\draw  [color={rgb, 255:red, 208; green, 2; blue, 27 }  ,draw opacity=1 ][fill={rgb, 255:red, 208; green, 2; blue, 27 }  ,fill opacity=1 ] (212.52,189.76) .. controls (212.52,188.66) and (213.42,187.76) .. (214.52,187.76) .. controls (215.62,187.76) and (216.52,188.66) .. (216.52,189.76) .. controls (216.52,190.87) and (215.62,191.76) .. (214.52,191.76) .. controls (213.42,191.76) and (212.52,190.87) .. (212.52,189.76) -- cycle ;
\draw  [color={rgb, 255:red, 208; green, 2; blue, 27 }  ,draw opacity=1 ][fill={rgb, 255:red, 208; green, 2; blue, 27 }  ,fill opacity=1 ] (186.52,204.76) .. controls (186.52,203.66) and (187.42,202.76) .. (188.52,202.76) .. controls (189.62,202.76) and (190.52,203.66) .. (190.52,204.76) .. controls (190.52,205.87) and (189.62,206.76) .. (188.52,206.76) .. controls (187.42,206.76) and (186.52,205.87) .. (186.52,204.76) -- cycle ;
\draw  [color={rgb, 255:red, 208; green, 2; blue, 27 }  ,draw opacity=1 ][fill={rgb, 255:red, 208; green, 2; blue, 27 }  ,fill opacity=1 ] (162.52,218.76) .. controls (162.52,217.66) and (163.42,216.76) .. (164.52,216.76) .. controls (165.62,216.76) and (166.52,217.66) .. (166.52,218.76) .. controls (166.52,219.87) and (165.62,220.76) .. (164.52,220.76) .. controls (163.42,220.76) and (162.52,219.87) .. (162.52,218.76) -- cycle ;
\draw  [color={rgb, 255:red, 208; green, 2; blue, 27 }  ,draw opacity=1 ][fill={rgb, 255:red, 208; green, 2; blue, 27 }  ,fill opacity=1 ] (151.52,244.76) .. controls (151.52,243.66) and (152.42,242.76) .. (153.52,242.76) .. controls (154.62,242.76) and (155.52,243.66) .. (155.52,244.76) .. controls (155.52,245.87) and (154.62,246.76) .. (153.52,246.76) .. controls (152.42,246.76) and (151.52,245.87) .. (151.52,244.76) -- cycle ;
\draw  [color={rgb, 255:red, 208; green, 2; blue, 27 }  ,draw opacity=1 ][fill={rgb, 255:red, 208; green, 2; blue, 27 }  ,fill opacity=1 ] (197.52,236.76) .. controls (197.52,235.66) and (198.42,234.76) .. (199.52,234.76) .. controls (200.62,234.76) and (201.52,235.66) .. (201.52,236.76) .. controls (201.52,237.87) and (200.62,238.76) .. (199.52,238.76) .. controls (198.42,238.76) and (197.52,237.87) .. (197.52,236.76) -- cycle ;
\draw  [color={rgb, 255:red, 208; green, 2; blue, 27 }  ,draw opacity=1 ][fill={rgb, 255:red, 208; green, 2; blue, 27 }  ,fill opacity=1 ] (139.52,271.76) .. controls (139.52,270.66) and (140.42,269.76) .. (141.52,269.76) .. controls (142.62,269.76) and (143.52,270.66) .. (143.52,271.76) .. controls (143.52,272.87) and (142.62,273.76) .. (141.52,273.76) .. controls (140.42,273.76) and (139.52,272.87) .. (139.52,271.76) -- cycle ;
\draw  [color={rgb, 255:red, 208; green, 2; blue, 27 }  ,draw opacity=1 ] (456.1,158.41) .. controls (444.26,144.26) and (463.51,108.67) .. (499.08,78.9) .. controls (534.66,49.13) and (573.09,36.47) .. (584.93,50.61) .. controls (596.76,64.76) and (577.52,100.35) .. (541.94,130.12) .. controls (506.37,159.89) and (467.93,172.55) .. (456.1,158.41) -- cycle ;
\draw  [color={rgb, 255:red, 0; green, 0; blue, 124 }  ,draw opacity=1 ][dash pattern={on 5.63pt off 4.5pt}][line width=1.5]  (442.54,122.37) .. controls (442.54,75.77) and (480.32,38) .. (526.91,38) .. controls (573.51,38) and (611.28,75.77) .. (611.28,122.37) .. controls (611.28,168.97) and (573.51,206.74) .. (526.91,206.74) .. controls (480.32,206.74) and (442.54,168.97) .. (442.54,122.37) -- cycle ;
\draw [color={rgb, 255:red, 0; green, 0; blue, 124 }  ,draw opacity=1 ][line width=1.5]  [dash pattern={on 1.69pt off 2.76pt}]  (510.5,40) -- (528.52,120.76) ;
\draw  [color={rgb, 255:red, 208; green, 2; blue, 27 }  ,draw opacity=1 ][fill={rgb, 255:red, 208; green, 2; blue, 27 }  ,fill opacity=1 ] (562.52,67.76) .. controls (562.52,66.66) and (563.42,65.76) .. (564.52,65.76) .. controls (565.62,65.76) and (566.52,66.66) .. (566.52,67.76) .. controls (566.52,68.87) and (565.62,69.76) .. (564.52,69.76) .. controls (563.42,69.76) and (562.52,68.87) .. (562.52,67.76) -- cycle ;
\draw  [color={rgb, 255:red, 208; green, 2; blue, 27 }  ,draw opacity=1 ][fill={rgb, 255:red, 208; green, 2; blue, 27 }  ,fill opacity=1 ] (554.5,77) .. controls (554.5,75.9) and (555.4,75) .. (556.5,75) .. controls (557.6,75) and (558.5,75.9) .. (558.5,77) .. controls (558.5,78.1) and (557.6,79) .. (556.5,79) .. controls (555.4,79) and (554.5,78.1) .. (554.5,77) -- cycle ;
\draw [color={rgb, 255:red, 208; green, 2; blue, 27 }  ,draw opacity=1 ][line width=1.5]  [dash pattern={on 1.69pt off 2.76pt}]  (493.52,104.76) -- (487.51,120.88) ;
\draw  [color={rgb, 255:red, 0; green, 0; blue, 0 }  ,draw opacity=1 ][fill={rgb, 255:red, 0; green, 0; blue, 0 }  ,fill opacity=1 ] (574.52,172.76) .. controls (574.52,171.66) and (575.42,170.76) .. (576.52,170.76) .. controls (577.62,170.76) and (578.52,171.66) .. (578.52,172.76) .. controls (578.52,173.87) and (577.62,174.76) .. (576.52,174.76) .. controls (575.42,174.76) and (574.52,173.87) .. (574.52,172.76) -- cycle ;
\draw  [color={rgb, 255:red, 208; green, 2; blue, 27 }  ,draw opacity=1 ][fill={rgb, 255:red, 208; green, 2; blue, 27 }  ,fill opacity=1 ] (534.52,71.76) .. controls (534.52,70.66) and (535.42,69.76) .. (536.52,69.76) .. controls (537.62,69.76) and (538.52,70.66) .. (538.52,71.76) .. controls (538.52,72.87) and (537.62,73.76) .. (536.52,73.76) .. controls (535.42,73.76) and (534.52,72.87) .. (534.52,71.76) -- cycle ;
\draw  [color={rgb, 255:red, 208; green, 2; blue, 27 }  ,draw opacity=1 ][fill={rgb, 255:red, 208; green, 2; blue, 27 }  ,fill opacity=1 ] (506.52,90.76) .. controls (506.52,89.66) and (507.42,88.76) .. (508.52,88.76) .. controls (509.62,88.76) and (510.52,89.66) .. (510.52,90.76) .. controls (510.52,91.87) and (509.62,92.76) .. (508.52,92.76) .. controls (507.42,92.76) and (506.52,91.87) .. (506.52,90.76) -- cycle ;
\draw  [color={rgb, 255:red, 208; green, 2; blue, 27 }  ,draw opacity=1 ][fill={rgb, 255:red, 208; green, 2; blue, 27 }  ,fill opacity=1 ] (491.52,102.76) .. controls (491.52,101.66) and (492.42,100.76) .. (493.52,100.76) .. controls (494.62,100.76) and (495.52,101.66) .. (495.52,102.76) .. controls (495.52,103.87) and (494.62,104.76) .. (493.52,104.76) .. controls (492.42,104.76) and (491.52,103.87) .. (491.52,102.76) -- cycle ;
\draw  [color={rgb, 255:red, 208; green, 2; blue, 27 }  ,draw opacity=1 ][fill={rgb, 255:red, 208; green, 2; blue, 27 }  ,fill opacity=1 ] (485.51,118.88) .. controls (485.51,117.78) and (486.41,116.88) .. (487.51,116.88) .. controls (488.61,116.88) and (489.51,117.78) .. (489.51,118.88) .. controls (489.51,119.99) and (488.61,120.88) .. (487.51,120.88) .. controls (486.41,120.88) and (485.51,119.99) .. (485.51,118.88) -- cycle ;
\draw  [color={rgb, 255:red, 0; green, 0; blue, 124 }  ,draw opacity=1 ][fill={rgb, 255:red, 0; green, 0; blue, 124 }  ,fill opacity=1 ] (526.52,120.76) .. controls (526.52,119.66) and (527.42,118.76) .. (528.52,118.76) .. controls (529.62,118.76) and (530.52,119.66) .. (530.52,120.76) .. controls (530.52,121.87) and (529.62,122.76) .. (528.52,122.76) .. controls (527.42,122.76) and (526.52,121.87) .. (526.52,120.76) -- cycle ;
\draw  [color={rgb, 255:red, 208; green, 2; blue, 27 }  ,draw opacity=1 ][fill={rgb, 255:red, 208; green, 2; blue, 27 }  ,fill opacity=1 ] (468.52,155.76) .. controls (468.52,154.66) and (469.42,153.76) .. (470.52,153.76) .. controls (471.62,153.76) and (472.52,154.66) .. (472.52,155.76) .. controls (472.52,156.87) and (471.62,157.76) .. (470.52,157.76) .. controls (469.42,157.76) and (468.52,156.87) .. (468.52,155.76) -- cycle ;
\draw  [color={rgb, 255:red, 208; green, 2; blue, 27 }  ,draw opacity=1 ][fill={rgb, 255:red, 208; green, 2; blue, 27 }  ,fill opacity=1 ] (550.5,62) .. controls (550.5,60.9) and (551.4,60) .. (552.5,60) .. controls (553.6,60) and (554.5,60.9) .. (554.5,62) .. controls (554.5,63.1) and (553.6,64) .. (552.5,64) .. controls (551.4,64) and (550.5,63.1) .. (550.5,62) -- cycle ;
\draw  [color={rgb, 255:red, 208; green, 2; blue, 27 }  ,draw opacity=1 ][fill={rgb, 255:red, 208; green, 2; blue, 27 }  ,fill opacity=1 ] (564.52,78.76) .. controls (564.52,77.66) and (565.42,76.76) .. (566.52,76.76) .. controls (567.62,76.76) and (568.52,77.66) .. (568.52,78.76) .. controls (568.52,79.87) and (567.62,80.76) .. (566.52,80.76) .. controls (565.42,80.76) and (564.52,79.87) .. (564.52,78.76) -- cycle ;
\draw  [color={rgb, 255:red, 208; green, 2; blue, 27 }  ,draw opacity=1 ][fill={rgb, 255:red, 208; green, 2; blue, 27 }  ,fill opacity=1 ] (554.52,92.76) .. controls (554.52,91.66) and (555.42,90.76) .. (556.52,90.76) .. controls (557.62,90.76) and (558.52,91.66) .. (558.52,92.76) .. controls (558.52,93.87) and (557.62,94.76) .. (556.52,94.76) .. controls (555.42,94.76) and (554.52,93.87) .. (554.52,92.76) -- cycle ;
\draw  [color={rgb, 255:red, 208; green, 2; blue, 27 }  ,draw opacity=1 ][fill={rgb, 255:red, 208; green, 2; blue, 27 }  ,fill opacity=1 ] (545.52,111.76) .. controls (545.52,110.66) and (546.42,109.76) .. (547.52,109.76) .. controls (548.62,109.76) and (549.52,110.66) .. (549.52,111.76) .. controls (549.52,112.87) and (548.62,113.76) .. (547.52,113.76) .. controls (546.42,113.76) and (545.52,112.87) .. (545.52,111.76) -- cycle ;
\draw  [color={rgb, 255:red, 208; green, 2; blue, 27 }  ,draw opacity=1 ][fill={rgb, 255:red, 208; green, 2; blue, 27 }  ,fill opacity=1 ] (505.52,103.76) .. controls (505.52,102.66) and (506.42,101.76) .. (507.52,101.76) .. controls (508.62,101.76) and (509.52,102.66) .. (509.52,103.76) .. controls (509.52,104.87) and (508.62,105.76) .. (507.52,105.76) .. controls (506.42,105.76) and (505.52,104.87) .. (505.52,103.76) -- cycle ;
\draw  [color={rgb, 255:red, 208; green, 2; blue, 27 }  ,draw opacity=1 ][fill={rgb, 255:red, 208; green, 2; blue, 27 }  ,fill opacity=1 ] (504.52,125.76) .. controls (504.52,124.66) and (505.42,123.76) .. (506.52,123.76) .. controls (507.62,123.76) and (508.52,124.66) .. (508.52,125.76) .. controls (508.52,126.87) and (507.62,127.76) .. (506.52,127.76) .. controls (505.42,127.76) and (504.52,126.87) .. (504.52,125.76) -- cycle ;
\draw  [color={rgb, 255:red, 208; green, 2; blue, 27 }  ,draw opacity=1 ][fill={rgb, 255:red, 208; green, 2; blue, 27 }  ,fill opacity=1 ] (519.52,122.76) .. controls (519.52,121.66) and (520.42,120.76) .. (521.52,120.76) .. controls (522.62,120.76) and (523.52,121.66) .. (523.52,122.76) .. controls (523.52,123.87) and (522.62,124.76) .. (521.52,124.76) .. controls (520.42,124.76) and (519.52,123.87) .. (519.52,122.76) -- cycle ;
\draw  [color={rgb, 255:red, 208; green, 2; blue, 27 }  ,draw opacity=1 ][fill={rgb, 255:red, 208; green, 2; blue, 27 }  ,fill opacity=1 ] (470.52,133.76) .. controls (470.52,132.66) and (471.42,131.76) .. (472.52,131.76) .. controls (473.62,131.76) and (474.52,132.66) .. (474.52,133.76) .. controls (474.52,134.87) and (473.62,135.76) .. (472.52,135.76) .. controls (471.42,135.76) and (470.52,134.87) .. (470.52,133.76) -- cycle ;
\draw  [color={rgb, 255:red, 208; green, 2; blue, 27 }  ,draw opacity=1 ][fill={rgb, 255:red, 208; green, 2; blue, 27 }  ,fill opacity=1 ] (489.52,136.76) .. controls (489.52,135.66) and (490.42,134.76) .. (491.52,134.76) .. controls (492.62,134.76) and (493.52,135.66) .. (493.52,136.76) .. controls (493.52,137.87) and (492.62,138.76) .. (491.52,138.76) .. controls (490.42,138.76) and (489.52,137.87) .. (489.52,136.76) -- cycle ;
\draw  [color={rgb, 255:red, 208; green, 2; blue, 27 }  ,draw opacity=1 ][fill={rgb, 255:red, 208; green, 2; blue, 27 }  ,fill opacity=1 ] (509.52,139.76) .. controls (509.52,138.66) and (510.42,137.76) .. (511.52,137.76) .. controls (512.62,137.76) and (513.52,138.66) .. (513.52,139.76) .. controls (513.52,140.87) and (512.62,141.76) .. (511.52,141.76) .. controls (510.42,141.76) and (509.52,140.87) .. (509.52,139.76) -- cycle ;
\draw  [color={rgb, 255:red, 208; green, 2; blue, 27 }  ,draw opacity=1 ][fill={rgb, 255:red, 208; green, 2; blue, 27 }  ,fill opacity=1 ] (519.52,71.76) .. controls (519.52,70.66) and (520.42,69.76) .. (521.52,69.76) .. controls (522.62,69.76) and (523.52,70.66) .. (523.52,71.76) .. controls (523.52,72.87) and (522.62,73.76) .. (521.52,73.76) .. controls (520.42,73.76) and (519.52,72.87) .. (519.52,71.76) -- cycle ;
\draw  [color={rgb, 255:red, 208; green, 2; blue, 27 }  ,draw opacity=1 ][fill={rgb, 255:red, 208; green, 2; blue, 27 }  ,fill opacity=1 ] (485.52,149.76) .. controls (485.52,148.66) and (486.42,147.76) .. (487.52,147.76) .. controls (488.62,147.76) and (489.52,148.66) .. (489.52,149.76) .. controls (489.52,150.87) and (488.62,151.76) .. (487.52,151.76) .. controls (486.42,151.76) and (485.52,150.87) .. (485.52,149.76) -- cycle ;
\draw  [color={rgb, 255:red, 208; green, 2; blue, 27 }  ,draw opacity=1 ][fill={rgb, 255:red, 208; green, 2; blue, 27 }  ,fill opacity=1 ] (473.52,114.76) .. controls (473.52,113.66) and (474.42,112.76) .. (475.52,112.76) .. controls (476.62,112.76) and (477.52,113.66) .. (477.52,114.76) .. controls (477.52,115.87) and (476.62,116.76) .. (475.52,116.76) .. controls (474.42,116.76) and (473.52,115.87) .. (473.52,114.76) -- cycle ;
\draw  [color={rgb, 255:red, 208; green, 2; blue, 27 }  ,draw opacity=1 ][fill={rgb, 255:red, 208; green, 2; blue, 27 }  ,fill opacity=1 ] (536.52,98.76) .. controls (536.52,97.66) and (537.42,96.76) .. (538.52,96.76) .. controls (539.62,96.76) and (540.52,97.66) .. (540.52,98.76) .. controls (540.52,99.87) and (539.62,100.76) .. (538.52,100.76) .. controls (537.42,100.76) and (536.52,99.87) .. (536.52,98.76) -- cycle ;
\draw [color={rgb, 255:red, 0; green, 0; blue, 124 }  ,draw opacity=1 ]   (487,76) .. controls (481.15,84.78) and (497.16,88.8) .. (517.91,81.01) ;
\draw [shift={(519.51,80.38)}, rotate = 158.17] [color={rgb, 255:red, 0; green, 0; blue, 124 }  ,draw opacity=1 ][line width=0.75]    (10.93,-3.29) .. controls (6.95,-1.4) and (3.31,-0.3) .. (0,0) .. controls (3.31,0.3) and (6.95,1.4) .. (10.93,3.29)   ;
\draw  [color={rgb, 255:red, 208; green, 2; blue, 27 }  ,draw opacity=1 ][fill={rgb, 255:red, 208; green, 2; blue, 27 }  ,fill opacity=1 ] (529.52,131.76) .. controls (529.52,130.66) and (530.42,129.76) .. (531.52,129.76) .. controls (532.62,129.76) and (533.52,130.66) .. (533.52,131.76) .. controls (533.52,132.87) and (532.62,133.76) .. (531.52,133.76) .. controls (530.42,133.76) and (529.52,132.87) .. (529.52,131.76) -- cycle ;
\draw [color={rgb, 255:red, 208; green, 2; blue, 27 }  ,draw opacity=1 ]   (461.01,100.38) .. controls (455.22,109.07) and (471.3,106.22) .. (488.61,109.61) ;
\draw [shift={(490.5,110)}, rotate = 192.53] [color={rgb, 255:red, 208; green, 2; blue, 27 }  ,draw opacity=1 ][line width=0.75]    (10.93,-3.29) .. controls (6.95,-1.4) and (3.31,-0.3) .. (0,0) .. controls (3.31,0.3) and (6.95,1.4) .. (10.93,3.29)   ;
\draw  [color={rgb, 255:red, 208; green, 2; blue, 27 }  ,draw opacity=1 ] (456.1,381.41) .. controls (444.26,367.26) and (463.51,331.67) .. (499.08,301.9) .. controls (534.66,272.13) and (573.09,259.47) .. (584.93,273.61) .. controls (596.76,287.76) and (577.52,323.35) .. (541.94,353.12) .. controls (506.37,382.89) and (467.93,395.55) .. (456.1,381.41) -- cycle ;
\draw  [color={rgb, 255:red, 0; green, 0; blue, 124 }  ,draw opacity=1 ][dash pattern={on 5.63pt off 4.5pt}][line width=1.5]  (488.24,354.76) .. controls (488.24,330.86) and (507.61,311.48) .. (531.52,311.48) .. controls (555.42,311.48) and (574.8,330.86) .. (574.8,354.76) .. controls (574.8,378.67) and (555.42,398.05) .. (531.52,398.05) .. controls (507.61,398.05) and (488.24,378.67) .. (488.24,354.76) -- cycle ;
\draw [color={rgb, 255:red, 0; green, 0; blue, 124 }  ,draw opacity=1 ][line width=1.5]  [dash pattern={on 1.69pt off 2.76pt}]  (523,314) -- (531.52,354.76) ;
\draw [color={rgb, 255:red, 208; green, 2; blue, 27 }  ,draw opacity=1 ][line width=1.5]  [dash pattern={on 1.69pt off 2.76pt}]  (518.52,356.76) -- (513.52,362.76) ;
\draw  [color={rgb, 255:red, 0; green, 0; blue, 0 }  ,draw opacity=1 ][fill={rgb, 255:red, 0; green, 0; blue, 0 }  ,fill opacity=1 ] (574.52,395.76) .. controls (574.52,394.66) and (575.42,393.76) .. (576.52,393.76) .. controls (577.62,393.76) and (578.52,394.66) .. (578.52,395.76) .. controls (578.52,396.87) and (577.62,397.76) .. (576.52,397.76) .. controls (575.42,397.76) and (574.52,396.87) .. (574.52,395.76) -- cycle ;
\draw  [color={rgb, 255:red, 208; green, 2; blue, 27 }  ,draw opacity=1 ][fill={rgb, 255:red, 208; green, 2; blue, 27 }  ,fill opacity=1 ] (553.52,328.76) .. controls (553.52,327.66) and (554.42,326.76) .. (555.52,326.76) .. controls (556.62,326.76) and (557.52,327.66) .. (557.52,328.76) .. controls (557.52,329.87) and (556.62,330.76) .. (555.52,330.76) .. controls (554.42,330.76) and (553.52,329.87) .. (553.52,328.76) -- cycle ;
\draw  [color={rgb, 255:red, 208; green, 2; blue, 27 }  ,draw opacity=1 ][fill={rgb, 255:red, 208; green, 2; blue, 27 }  ,fill opacity=1 ] (515.52,320.76) .. controls (515.52,319.66) and (516.42,318.76) .. (517.52,318.76) .. controls (518.62,318.76) and (519.52,319.66) .. (519.52,320.76) .. controls (519.52,321.87) and (518.62,322.76) .. (517.52,322.76) .. controls (516.42,322.76) and (515.52,321.87) .. (515.52,320.76) -- cycle ;
\draw  [color={rgb, 255:red, 208; green, 2; blue, 27 }  ,draw opacity=1 ][fill={rgb, 255:red, 208; green, 2; blue, 27 }  ,fill opacity=1 ] (506.52,337.76) .. controls (506.52,336.66) and (507.42,335.76) .. (508.52,335.76) .. controls (509.62,335.76) and (510.52,336.66) .. (510.52,337.76) .. controls (510.52,338.87) and (509.62,339.76) .. (508.52,339.76) .. controls (507.42,339.76) and (506.52,338.87) .. (506.52,337.76) -- cycle ;
\draw  [color={rgb, 255:red, 208; green, 2; blue, 27 }  ,draw opacity=1 ][fill={rgb, 255:red, 208; green, 2; blue, 27 }  ,fill opacity=1 ] (494.51,353.88) .. controls (494.51,352.78) and (495.41,351.88) .. (496.51,351.88) .. controls (497.61,351.88) and (498.51,352.78) .. (498.51,353.88) .. controls (498.51,354.99) and (497.61,355.88) .. (496.51,355.88) .. controls (495.41,355.88) and (494.51,354.99) .. (494.51,353.88) -- cycle ;
\draw  [color={rgb, 255:red, 208; green, 2; blue, 27 }  ,draw opacity=1 ][fill={rgb, 255:red, 208; green, 2; blue, 27 }  ,fill opacity=1 ] (496.52,342.76) .. controls (496.52,341.66) and (497.42,340.76) .. (498.52,340.76) .. controls (499.62,340.76) and (500.52,341.66) .. (500.52,342.76) .. controls (500.52,343.87) and (499.62,344.76) .. (498.52,344.76) .. controls (497.42,344.76) and (496.52,343.87) .. (496.52,342.76) -- cycle ;
\draw  [color={rgb, 255:red, 208; green, 2; blue, 27 }  ,draw opacity=1 ][fill={rgb, 255:red, 208; green, 2; blue, 27 }  ,fill opacity=1 ] (545.52,323.76) .. controls (545.52,322.66) and (546.42,321.76) .. (547.52,321.76) .. controls (548.62,321.76) and (549.52,322.66) .. (549.52,323.76) .. controls (549.52,324.87) and (548.62,325.76) .. (547.52,325.76) .. controls (546.42,325.76) and (545.52,324.87) .. (545.52,323.76) -- cycle ;
\draw  [color={rgb, 255:red, 208; green, 2; blue, 27 }  ,draw opacity=1 ][fill={rgb, 255:red, 208; green, 2; blue, 27 }  ,fill opacity=1 ] (545.52,334.76) .. controls (545.52,333.66) and (546.42,332.76) .. (547.52,332.76) .. controls (548.62,332.76) and (549.52,333.66) .. (549.52,334.76) .. controls (549.52,335.87) and (548.62,336.76) .. (547.52,336.76) .. controls (546.42,336.76) and (545.52,335.87) .. (545.52,334.76) -- cycle ;
\draw  [color={rgb, 255:red, 208; green, 2; blue, 27 }  ,draw opacity=1 ][fill={rgb, 255:red, 208; green, 2; blue, 27 }  ,fill opacity=1 ] (505.52,326.76) .. controls (505.52,325.66) and (506.42,324.76) .. (507.52,324.76) .. controls (508.62,324.76) and (509.52,325.66) .. (509.52,326.76) .. controls (509.52,327.87) and (508.62,328.76) .. (507.52,328.76) .. controls (506.42,328.76) and (505.52,327.87) .. (505.52,326.76) -- cycle ;
\draw  [color={rgb, 255:red, 208; green, 2; blue, 27 }  ,draw opacity=1 ][fill={rgb, 255:red, 208; green, 2; blue, 27 }  ,fill opacity=1 ] (504.52,348.76) .. controls (504.52,347.66) and (505.42,346.76) .. (506.52,346.76) .. controls (507.62,346.76) and (508.52,347.66) .. (508.52,348.76) .. controls (508.52,349.87) and (507.62,350.76) .. (506.52,350.76) .. controls (505.42,350.76) and (504.52,349.87) .. (504.52,348.76) -- cycle ;
\draw  [color={rgb, 255:red, 208; green, 2; blue, 27 }  ,draw opacity=1 ][fill={rgb, 255:red, 208; green, 2; blue, 27 }  ,fill opacity=1 ] (519.52,345.76) .. controls (519.52,344.66) and (520.42,343.76) .. (521.52,343.76) .. controls (522.62,343.76) and (523.52,344.66) .. (523.52,345.76) .. controls (523.52,346.87) and (522.62,347.76) .. (521.52,347.76) .. controls (520.42,347.76) and (519.52,346.87) .. (519.52,345.76) -- cycle ;
\draw  [color={rgb, 255:red, 208; green, 2; blue, 27 }  ,draw opacity=1 ][fill={rgb, 255:red, 208; green, 2; blue, 27 }  ,fill opacity=1 ] (518.52,356.76) .. controls (518.52,355.66) and (519.42,354.76) .. (520.52,354.76) .. controls (521.62,354.76) and (522.52,355.66) .. (522.52,356.76) .. controls (522.52,357.87) and (521.62,358.76) .. (520.52,358.76) .. controls (519.42,358.76) and (518.52,357.87) .. (518.52,356.76) -- cycle ;
\draw  [color={rgb, 255:red, 208; green, 2; blue, 27 }  ,draw opacity=1 ][fill={rgb, 255:red, 208; green, 2; blue, 27 }  ,fill opacity=1 ] (501.52,358.76) .. controls (501.52,357.66) and (502.42,356.76) .. (503.52,356.76) .. controls (504.62,356.76) and (505.52,357.66) .. (505.52,358.76) .. controls (505.52,359.87) and (504.62,360.76) .. (503.52,360.76) .. controls (502.42,360.76) and (501.52,359.87) .. (501.52,358.76) -- cycle ;
\draw  [color={rgb, 255:red, 208; green, 2; blue, 27 }  ,draw opacity=1 ][fill={rgb, 255:red, 208; green, 2; blue, 27 }  ,fill opacity=1 ] (509.52,362.76) .. controls (509.52,361.66) and (510.42,360.76) .. (511.52,360.76) .. controls (512.62,360.76) and (513.52,361.66) .. (513.52,362.76) .. controls (513.52,363.87) and (512.62,364.76) .. (511.52,364.76) .. controls (510.42,364.76) and (509.52,363.87) .. (509.52,362.76) -- cycle ;
\draw  [color={rgb, 255:red, 208; green, 2; blue, 27 }  ,draw opacity=1 ][fill={rgb, 255:red, 208; green, 2; blue, 27 }  ,fill opacity=1 ] (535.52,330.76) .. controls (535.52,329.66) and (536.42,328.76) .. (537.52,328.76) .. controls (538.62,328.76) and (539.52,329.66) .. (539.52,330.76) .. controls (539.52,331.87) and (538.62,332.76) .. (537.52,332.76) .. controls (536.42,332.76) and (535.52,331.87) .. (535.52,330.76) -- cycle ;
\draw  [color={rgb, 255:red, 208; green, 2; blue, 27 }  ,draw opacity=1 ][fill={rgb, 255:red, 208; green, 2; blue, 27 }  ,fill opacity=1 ] (499.52,370.76) .. controls (499.52,369.66) and (500.42,368.76) .. (501.52,368.76) .. controls (502.62,368.76) and (503.52,369.66) .. (503.52,370.76) .. controls (503.52,371.87) and (502.62,372.76) .. (501.52,372.76) .. controls (500.42,372.76) and (499.52,371.87) .. (499.52,370.76) -- cycle ;
\draw  [color={rgb, 255:red, 208; green, 2; blue, 27 }  ,draw opacity=1 ][fill={rgb, 255:red, 208; green, 2; blue, 27 }  ,fill opacity=1 ] (498.52,333.76) .. controls (498.52,332.66) and (499.42,331.76) .. (500.52,331.76) .. controls (501.62,331.76) and (502.52,332.66) .. (502.52,333.76) .. controls (502.52,334.87) and (501.62,335.76) .. (500.52,335.76) .. controls (499.42,335.76) and (498.52,334.87) .. (498.52,333.76) -- cycle ;
\draw  [color={rgb, 255:red, 208; green, 2; blue, 27 }  ,draw opacity=1 ][fill={rgb, 255:red, 208; green, 2; blue, 27 }  ,fill opacity=1 ] (536.52,321.76) .. controls (536.52,320.66) and (537.42,319.76) .. (538.52,319.76) .. controls (539.62,319.76) and (540.52,320.66) .. (540.52,321.76) .. controls (540.52,322.87) and (539.62,323.76) .. (538.52,323.76) .. controls (537.42,323.76) and (536.52,322.87) .. (536.52,321.76) -- cycle ;
\draw [color={rgb, 255:red, 0; green, 0; blue, 124 }  ,draw opacity=1 ]   (488,299) .. controls (482.18,307.73) and (507.41,329.63) .. (526.27,339.15) ;
\draw [shift={(528,340)}, rotate = 205.35] [color={rgb, 255:red, 0; green, 0; blue, 124 }  ,draw opacity=1 ][line width=0.75]    (10.93,-3.29) .. controls (6.95,-1.4) and (3.31,-0.3) .. (0,0) .. controls (3.31,0.3) and (6.95,1.4) .. (10.93,3.29)   ;
\draw  [color={rgb, 255:red, 0; green, 0; blue, 124 }  ,draw opacity=1 ][fill={rgb, 255:red, 0; green, 0; blue, 124 }  ,fill opacity=1 ] (529.52,354.76) .. controls (529.52,353.66) and (530.42,352.76) .. (531.52,352.76) .. controls (532.62,352.76) and (533.52,353.66) .. (533.52,354.76) .. controls (533.52,355.87) and (532.62,356.76) .. (531.52,356.76) .. controls (530.42,356.76) and (529.52,355.87) .. (529.52,354.76) -- cycle ;
\draw [color={rgb, 255:red, 208; green, 2; blue, 27 }  ,draw opacity=1 ]   (461.01,323.38) .. controls (455.19,332.11) and (497.82,351.77) .. (516.83,356.38) ;
\draw [shift={(518.52,356.76)}, rotate = 191.8] [color={rgb, 255:red, 208; green, 2; blue, 27 }  ,draw opacity=1 ][line width=0.75]    (10.93,-3.29) .. controls (6.95,-1.4) and (3.31,-0.3) .. (0,0) .. controls (3.31,0.3) and (6.95,1.4) .. (10.93,3.29)   ;
\draw  [color={rgb, 255:red, 208; green, 2; blue, 27 }  ,draw opacity=1 ][fill={rgb, 255:red, 208; green, 2; blue, 27 }  ,fill opacity=1 ] (527.52,325.76) .. controls (527.52,324.66) and (528.42,323.76) .. (529.52,323.76) .. controls (530.62,323.76) and (531.52,324.66) .. (531.52,325.76) .. controls (531.52,326.87) and (530.62,327.76) .. (529.52,327.76) .. controls (528.42,327.76) and (527.52,326.87) .. (527.52,325.76) -- cycle ;
\draw [color={rgb, 255:red, 65; green, 117; blue, 5 }  ,draw opacity=1 ][line width=1.5]    (362.51,156.51) -- (427.54,145.5) ;
\draw [shift={(430.5,145)}, rotate = 170.39] [color={rgb, 255:red, 65; green, 117; blue, 5 }  ,draw opacity=1 ][line width=1.5]    (14.21,-4.28) .. controls (9.04,-1.82) and (4.3,-0.39) .. (0,0) .. controls (4.3,0.39) and (9.04,1.82) .. (14.21,4.28)   ;
\draw [color={rgb, 255:red, 65; green, 117; blue, 5 }  ,draw opacity=1 ][line width=1.5]    (527,216) -- (527,268) ;
\draw [shift={(527,271)}, rotate = 270] [color={rgb, 255:red, 65; green, 117; blue, 5 }  ,draw opacity=1 ][line width=1.5]    (14.21,-4.28) .. controls (9.04,-1.82) and (4.3,-0.39) .. (0,0) .. controls (4.3,0.39) and (9.04,1.82) .. (14.21,4.28)   ;
\draw [color={rgb, 255:red, 208; green, 2; blue, 27 }  ,draw opacity=1 ]   (120.01,226.38) .. controls (114.19,235.11) and (136.12,233.13) .. (156.61,232.9) ;
\draw [shift={(158.51,232.88)}, rotate = 179.68] [color={rgb, 255:red, 208; green, 2; blue, 27 }  ,draw opacity=1 ][line width=0.75]    (10.93,-3.29) .. controls (6.95,-1.4) and (3.31,-0.3) .. (0,0) .. controls (3.31,0.3) and (6.95,1.4) .. (10.93,3.29)   ;
\draw [color={rgb, 255:red, 0; green, 0; blue, 124 }  ,draw opacity=1 ]   (209.5,137) .. controls (196.2,150.3) and (186.51,144.65) .. (176.58,138.69) ;
\draw [shift={(175.01,137.75)}, rotate = 30.76] [color={rgb, 255:red, 0; green, 0; blue, 124 }  ,draw opacity=1 ][line width=0.75]    (10.93,-3.29) .. controls (6.95,-1.4) and (3.31,-0.3) .. (0,0) .. controls (3.31,0.3) and (6.95,1.4) .. (10.93,3.29)   ;

\draw (195.51,202.91) node [anchor=north west][inner sep=0.75pt]  [color={rgb, 255:red, 0; green, 0; blue, 124 }  ,opacity=1 ]  {$\nu ^{*}$};
\draw (260.51,190.91) node [anchor=north west][inner sep=0.75pt]  [color={rgb, 255:red, 208; green, 2; blue, 27 }  ,opacity=1 ]  {$K$};
\draw (245.51,34.91) node [anchor=north west][inner sep=0.75pt]  [color={rgb, 255:red, 0; green, 0; blue, 124 }  ,opacity=1 ]  {$B\left( \nu ^{*} ,d\right)$};
\draw (109.51,213.91) node [anchor=north west][inner sep=0.75pt]  [font=\scriptsize,color={rgb, 255:red, 208; green, 2; blue, 27 }  ,opacity=1 ]  {$d/c$};
\draw (208.51,119.91) node [anchor=north west][inner sep=0.75pt]  [color={rgb, 255:red, 0; green, 0; blue, 124 }  ,opacity=1 ]  {$d$};
\draw (248.51,279.91) node [anchor=north west][inner sep=0.75pt]  [color={rgb, 255:red, 0; green, 0; blue, 0 }  ,opacity=1 ]  {$\mathrm{Y}$};
\draw (527.51,100.91) node [anchor=north west][inner sep=0.75pt]  [color={rgb, 255:red, 0; green, 0; blue, 124 }  ,opacity=1 ]  {$\nu ^{*}$};
\draw (585.51,77.91) node [anchor=north west][inner sep=0.75pt]  [color={rgb, 255:red, 208; green, 2; blue, 27 }  ,opacity=1 ]  {$K$};
\draw (510.51,9.91) node [anchor=north west][inner sep=0.75pt]  [color={rgb, 255:red, 0; green, 0; blue, 124 }  ,opacity=1 ]  {$B\left( \nu ^{*} ,d/2\right)$};
\draw (453.52,87.16) node [anchor=north west][inner sep=0.75pt]  [font=\scriptsize,color={rgb, 255:red, 208; green, 2; blue, 27 }  ,opacity=1 ]  {$d/2c$};
\draw (476.51,62.91) node [anchor=north west][inner sep=0.75pt]  [font=\scriptsize,color={rgb, 255:red, 0; green, 0; blue, 124 }  ,opacity=1 ]  {$d/2$};
\draw (577.51,162.91) node [anchor=north west][inner sep=0.75pt]  [color={rgb, 255:red, 0; green, 0; blue, 0 }  ,opacity=1 ]  {$\mathrm{Y}$};
\draw (531.51,334.91) node [anchor=north west][inner sep=0.75pt]  [color={rgb, 255:red, 0; green, 0; blue, 124 }  ,opacity=1 ]  {$\nu ^{*}$};
\draw (585.51,300.91) node [anchor=north west][inner sep=0.75pt]  [color={rgb, 255:red, 208; green, 2; blue, 27 }  ,opacity=1 ]  {$K$};
\draw (578.51,343.91) node [anchor=north west][inner sep=0.75pt]  [font=\scriptsize,color={rgb, 255:red, 0; green, 0; blue, 124 }  ,opacity=1 ]  {$B\left( \nu ^{*} ,d/4\right)$};
\draw (453.52,310.16) node [anchor=north west][inner sep=0.75pt]  [font=\scriptsize,color={rgb, 255:red, 208; green, 2; blue, 27 }  ,opacity=1 ]  {$d/4c$};
\draw (476.51,285.91) node [anchor=north west][inner sep=0.75pt]  [font=\scriptsize,color={rgb, 255:red, 0; green, 0; blue, 124 }  ,opacity=1 ]  {$d/4$};
\draw (577.51,385.91) node [anchor=north west][inner sep=0.75pt]  [color={rgb, 255:red, 0; green, 0; blue, 0 }  ,opacity=1 ]  {$\mathrm{Y}$};
\draw (178,407) node [anchor=north west][inner sep=0.75pt]  [font=\large,color={rgb, 255:red, 65; green, 117; blue, 5 }  ,opacity=1 ] [align=left] {Step 1};
\draw (363,118) node [anchor=north west][inner sep=0.75pt]  [font=\large,color={rgb, 255:red, 65; green, 117; blue, 5 }  ,opacity=1 ] [align=left] {Step 2};
\draw (536,228) node [anchor=north west][inner sep=0.75pt]  [font=\large,color={rgb, 255:red, 65; green, 117; blue, 5 }  ,opacity=1 ] [align=left] {Step 3};

\end{tikzpicture}
    \caption{Diagram of the first three iterations of Algorithm \ref{test}.}
    \label{fig:diagram}
\end{figure}
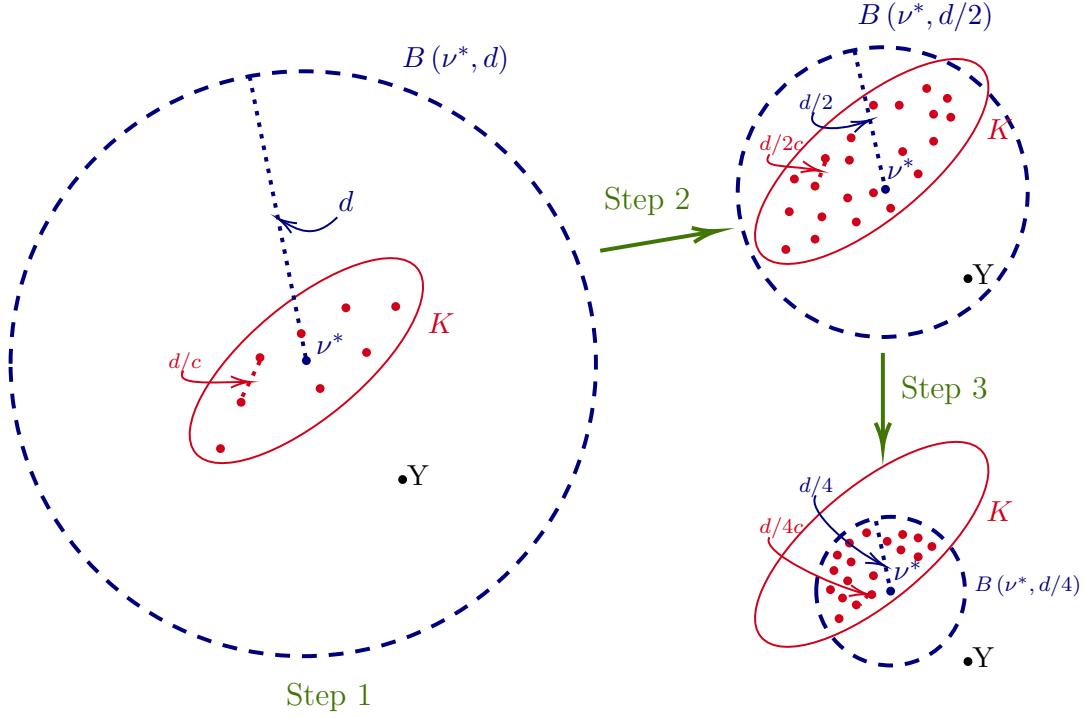



\begin{algorithm}
\SetKwComment{Comment}{/* }{ */}
\caption{Upper Bound Algorithm}\label{test}
\KwInput{A point $\nu^* \in K$}
$k \gets 1$\;
$\Upsilon \gets [\nu^*]$ \Comment*[r]{This array is needed solely in the proof and is not used by the estimator}
\While{TRUE}{
    Take a $\frac{d}{2^{k}(C+1)}$ maximal\footnotemark[1] packing set $M_k$ of the set $B\big(\nu^*, \frac{d}{2^{k-1}}\big) \cap K$ \Comment*[r]{The packing sets should be constructed prior to seeing the data}
    $\nu^* \gets \argmin_{\nu \in M_k} \|Y - \nu\| $ \Comment*[r]{Break ties by taking the point with the least lexicographic ordering}
    $\Upsilon$.append$(\nu^*)$\;
    $k \gets k + 1$\;
}
\Return{$\nu^*$}  \Comment*[r]{Observe that by definition $\Upsilon$ forms a Cauchy sequence\footnotemark[2], so $\nu^*$ can be understood as the limiting point of that sequence.}
\end{algorithm}
\footnotetext[1]{Here the maximality of the packing set is not really important; what is important is that the packing set is a covering. This can be ``constructed algorithmically'' by greedily taking points one by one and carving balls centered at those points.}

\footnotetext[2]{Take any two points $\Upsilon_{m}$ and  $\Upsilon_{m'}$ for $m' > m$. Then $\|\Upsilon_{m} - \Upsilon_{m'}\|\leq \sum_{i = m}^{m'-1} \|\Upsilon_i - \Upsilon_{i+1}\| \leq \sum_{i = m}^{m'-1} d/2^{i-1} \leq d/2^{m-2}$, so we have a Cauchy sequence.}

Before we proceed any further we will argue that the so defined estimator $\nu^* = \nu^*(Y)$ is a measurable function of the data. We have

\begin{theorem}\label{measurability:theorem}
The function $\nu^* : \RR^n \mapsto \RR^n$ is measurable (with respect to the Borel $\sigma$-field). As a consequence we have that $\nu^*(Y)$ is a random variable.
\end{theorem}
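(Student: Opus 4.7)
The plan is to show that the limit $\nu^*(Y)$ is the pointwise limit of the finite-iteration truncations $\nu^*_k(Y)$ (the value of $\nu^*$ after the $k$-th update in Algorithm \ref{test}), and to prove by induction on $k$ that each $\nu^*_k$ is Borel measurable. The convergence $\nu^*_k \to \nu^*$ pointwise has already been established in the footnote (the sequence $\Upsilon$ is Cauchy deterministically for every $Y$), so once the truncations are measurable, the conclusion follows immediately from the standard fact that pointwise limits of measurable functions into $\mathbb{R}^n$ are measurable.

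To set up the induction, I would first observe that the entire nested family of packing sets is a deterministic object that does not depend on $Y$: it organizes into a tree whose root is the starting point $\nu^* \in K$, and whose children of a node $\ell$ (a possible value of $\nu^*_{k-1}$) are the finitely many points of the packing set $M_k(\ell)$ of $B(\ell, d/2^{k-1}) \cap K$ at scale $d/(2^k(C+1))$. Because $K$ is bounded, each $M_k(\ell)$ is finite, so after $k$ steps there are at most finitely many possible values of $\nu^*_k$. This reduces the measurability question to a finite case-analysis at each level.

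The base case $\nu^*_0 \equiv \nu^* \in K$ is trivial. For the inductive step, assume $\nu^*_{k-1}$ is Borel measurable and enumerate its finite range $\{\ell_1,\ldots,\ell_N\}$. The preimages $A_i := \{Y \in \mathbb{R}^n : \nu^*_{k-1}(Y) = \ell_i\}$ form a finite measurable partition of $\mathbb{R}^n$. On each $A_i$ the estimator is updated to $\nu^*_k(Y) = \arg\min_{\nu \in M_k(\ell_i)} \|Y - \nu\|$, with ties broken lexicographically. Since $M_k(\ell_i) = \{\nu_1,\ldots,\nu_m\}$ is a finite set, the event $\{\nu^*_k = \nu_j\} \cap A_i$ can be written explicitly as the intersection of finitely many half-space-type sets $\{Y : \|Y - \nu_j\| < \|Y - \nu_{j'}\|\}$ for $j' \prec j$ in the tie-breaking order, $\{Y : \|Y - \nu_j\| \leq \|Y - \nu_{j'}\|\}$ for $j' \succ j$, and $A_i$, each of which is Borel. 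Therefore $\nu^*_k$ is a finite sum $\sum_{i,j} \nu_j \mathbbm{1}\{\nu^*_k = \nu_j,\, Y \in A_i\}$ of measurable indicators times constants, hence Borel measurable.

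Passing to the limit, I would then invoke the componentwise version of the measurability of pointwise limits: for each coordinate $t \in [n]$, the map $Y \mapsto (\nu^*(Y))_t = \lim_k (\nu^*_k(Y))_t$ is the pointwise limit of real-valued Borel functions, hence Borel. This gives measurability of $\nu^*$ as a map into $\mathbb{R}^n$, and in particular $\nu^*(Y)$ is a well-defined random vector. I do not anticipate any serious obstacle beyond the bookkeeping of the tree of packing sets; the only subtlety is making sure the lexicographic tie-breaking is treated carefully so that the argmin is a genuine (measurable) function rather than a multi-valued correspondence, which is handled by the half-space description above.
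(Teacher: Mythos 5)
Your proposal is correct and follows essentially the same route as the paper: you prove measurability of each finite-step iterate by noting that the pre-built tree of packing sets is finite at every level and that the argmin with a consistent tie-break partitions $\RR^n$ into Borel (polytopal/half-space) pieces, and you then pass to the pointwise limit of measurable functions. The paper's Voronoi-tessellation description and its explicit $\inf\sup$ treatment of the limit are just more detailed renderings of the same two steps.
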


\begin{proof}
First we observe that for each $j$: $\Upsilon_j: \RR^n \mapsto \RR^n$ are measurable (here we denote by $\Upsilon_j$ the elements of the array $\Upsilon$ which is defined in Algorithm \ref{test}). In order to see this, we need to realize that one can (and should) construct the packing sets before one sees the data $Y$. This will form an infinite tree of packing sets rooted at the initial point $\Upsilon_1$. Each packing set splits $\mathbb{R}^n$ into polytopes (some of which may be unbounded) where each point in the packing set is the closest to any point in its corresponding polytope (this is the Voronoi tessellation in Euclidean norm). On the boundaries of these polytopes more than one point can be the closest point --- in that case in order to consistently assign a single point always take the point with the least lexicographic order (i.e. it has the smallest 1st coordinate of all points, and the smallest 2nd coordinate of all points with equally small first coordinate and so forth). 

Consider the event that $\Upsilon_j(y)$ belongs to a certain packing set, say, $M$ (i.e. the point $y$ is closest to all ancestor nodes of $M$ which essentially means that $y$ belongs to some intersection of polytopes (which is again a polytope call it $Q$)). For a point $m \in M$ we have that $\{y : \Upsilon_j(y) = m\} = (y \in P) \cap \{y : \Upsilon_j(y) \in M\} =  (y \in P) \cap (y \in Q) = (y \in P\cap Q)$, where $P$ is the polytope from the Voronoi tessalation given by $M$, of the point $m$. Since (convex) polytopes are comprised of finitely many linear inequalities they are Borel sets and hence the event $(\Upsilon_j(y) = m)$ is measurable. Repeating this argument for any point on the same width of the tree on which the point $m$ lies (i.e. on depth $j$ of the tree), shows that $\Upsilon_j$ is a measurable function and $\Upsilon_j(Y)$ is a discrete random variable. 

Next, we have $\nu^*(y) = \lim_{j} \Upsilon_j(y)$, where we know the limit exists since as we mentioned $\Upsilon_j(y)$ form a Cauchy sequence (hence a converging sequence) by definition. It suffices to check whether $\{y : \nu^*(y) \in B\}$ is a Borel set for any closed box $B$ (i.e., $B$ is a hyperrectangle parallel to the coordinate axes). Since 
\begin{align*}
    \{y : \nu^*(y) \in B\} = \bigcap_{j = 1}^n \{y : B_j^L \leq \nu^{j*}(y) \leq B_j^U\},
\end{align*}
where $\nu^{j*}$ denotes the $j$-th coordinate of $\nu^{*}$, and $ B_j^L$ and $ B_j^U$ are the upper and lower bounds of the box $B$ for the $j$-th coordinate, it suffices to show that the sets $\{y : B_j^L \leq \lim_i \Upsilon_i^{j}(y) \leq B_j^U\}$ are measurable. Note that since the sequence is converging 
\begin{align*}
    \lim_i \Upsilon_i^{j}(y) = \inf_{i \geq 1} \sup_{k \geq i} \Upsilon_k^{j}(y).
\end{align*}
Next
\begin{align*}
    \MoveEqLeft \{y : B_j^L \leq \lim_i \Upsilon_i^{j}(y) \leq B_j^U\} \\
    & = \{y: \inf_{i \geq 1} \sup_{k \geq i} \Upsilon_k^{j}(y) \leq B_j^U\}\bigcap\{y: B_j^L \leq \inf_{i \geq 1} \sup_{k \geq i} \Upsilon_k^{j}(y)\} \\
    & = \bigcap_{l \geq 1}\bigcup_{i \geq 1} \bigcap_{k \geq i} \{y : \Upsilon_k^{j}(y) \leq B_j^U + l^{-1}\} \bigcap \bigcap_{i \geq 1} \bigcup_{k \geq i} \{y : B_j^L \leq \Upsilon_k^{j}(y)\}.
\end{align*}
Finally note that the events $\{y : B_j^L \leq \Upsilon_k^{j}(y)\}$ and $\{y : \Upsilon_k^{j}(y) \leq B_j^U + l^{-1}\}$ are measurable since as we showed $\Upsilon_k$ are measurable, and the sets $\mathbb{R} \times \ldots (-\infty, B_j^U + l^{-1}] \times \mathbb{R}$ and $\mathbb{R} \times \ldots [B_j^L, \infty) \times \mathbb{R}$ are Borel sets in $\mathbb{R}^n$. This completes the proof.
\end{proof}

We will now argue that the estimator from Algorithm \ref{test} attains the minimax rate. The ideas we use are strongly inspired by the works of \cite{lecam1973convergence, birge1983approximation}. We start with a simple lemma.

\begin{lemma}\label{important:lemma} Suppose we are testing $H_0: \mu = \nu_1$ vs $H_A: \mu = \nu_2$ for $\|\nu_1 - \nu_2\| \geq C \delta$ for some $C > 2$. Then the test $\psi(Y) = \mathbbm{1}(\|Y - \nu_1\| \geq \|Y - \nu_2\|)$ satisfies
\begin{align*}
    \sup_{\mu: \|\mu- \nu_1\| \leq \delta}\mathbb{P}_{\mu}(\psi = 1) \vee \sup_{\mu: \|\mu- \nu_2\| \leq \delta}\mathbb{P}_{\mu}(\psi = 0) \leq \exp\bigg(-(C - 2)^2  \frac{\delta^2}{8 \sigma^2}\bigg).
\end{align*}
\end{lemma}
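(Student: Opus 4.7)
The plan is to use the standard minimum-distance test analysis. The first step is to rewrite the test in a more convenient form. Since $\|Y-\nu_1\|^2 \ge \|Y-\nu_2\|^2$ is equivalent, after expansion and cancellation of $\|Y\|^2$, to
\begin{align*}
    \bigl\langle Y - \tfrac{\nu_1+\nu_2}{2},\, \nu_2 - \nu_1 \bigr\rangle \;\ge\; 0,
\end{align*}
the test $\psi$ is just a threshold on a single linear functional of $Y$.

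Next, I would specialize to each hypothesis. Under $\mathbb{P}_\mu$ write $Y = \mu + \xi$ with $\xi \sim N(0,\sigma^2 \mathbb{I})$. Then the rejection event becomes
\begin{align*}
    \bigl\langle \xi, \nu_2-\nu_1 \bigr\rangle \;\ge\; \bigl\langle \tfrac{\nu_1+\nu_2}{2} - \mu,\, \nu_2-\nu_1 \bigr\rangle.
\end{align*}
For the type I bound, suppose $\|\mu-\nu_1\|\le\delta$. Writing $\tfrac{\nu_1+\nu_2}{2} - \mu = (\nu_1-\mu) + \tfrac{\nu_2-\nu_1}{2}$, the right-hand side equals $\tfrac12\|\nu_2-\nu_1\|^2 + \langle \nu_1 - \mu,\nu_2-\nu_1\rangle$, which by Cauchy--Schwarz is at least
\begin{align*}
    \tfrac{1}{2}\|\nu_2-\nu_1\|^2 - \delta\|\nu_2-\nu_1\| \;=\; \|\nu_2-\nu_1\|\bigl(\tfrac{\|\nu_2-\nu_1\|}{2} - \delta\bigr) \;\ge\; \tfrac{(C-2)\delta}{2}\|\nu_2-\nu_1\|,
\end{align*}
where the last inequality uses the assumption $\|\nu_1-\nu_2\|\ge C\delta$.

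Finally, I would apply a Gaussian tail bound: $\langle \xi, \nu_2-\nu_1\rangle \sim N(0,\sigma^2\|\nu_2-\nu_1\|^2)$, so
\begin{align*}
    \mathbb{P}_\mu(\psi = 1) \;\le\; \mathbb{P}\bigl(\langle \xi, \nu_2-\nu_1\rangle \ge \tfrac{(C-2)\delta}{2}\|\nu_2-\nu_1\|\bigr) \;\le\; \exp\!\bigl(-(C-2)^2 \delta^2/(8\sigma^2)\bigr).
\end{align*}
The type II bound follows by the symmetric argument, swapping the roles of $\nu_1$ and $\nu_2$. There is no real obstacle here; the only thing to be slightly careful about is the sign and the direction of the Cauchy--Schwarz application, and the condition $C>2$ is exactly what is needed to keep the exponent positive.
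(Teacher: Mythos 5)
Your proposal is correct and follows essentially the same route as the paper: both reduce the event $\{\psi=1\}$ to a one-dimensional Gaussian linear statistic, bound its mean shift using $\|\mu-\nu_1\|\le\delta$ and $\|\nu_1-\nu_2\|\ge C\delta$, and finish with the standard Gaussian tail bound, yielding the identical exponent $(C-2)^2\delta^2/(8\sigma^2)$. The only difference is cosmetic (you center at the midpoint and keep $\delta$ explicit, while the paper writes $\mu=\nu_1+\eta$ and works with $\|\nu_1-\nu_2\|^2$ before converting), so no further comparison is needed.
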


\begin{proof}Observe that
\begin{align*}
    \|Y - \nu_1\|^2 - \|Y - \nu_2\|^2 = 2(\mu + \xi)\T (\nu_2 - \nu_1) + \|\nu_1\|^2 - \|\nu_2\|^2.
\end{align*}
Suppose $\|\mu - \nu_1\| \leq \delta$. Then $\mu = \nu_1 + \eta$, $\|\eta\| \leq \delta$ and hence
\begin{align*}
    \MoveEqLeft 2(\mu + \xi)\T (\nu_2 - \nu_1) + \|\nu_1\|^2 - \|\nu_2\|^2 \\
    & = 2 \nu_1\T (\nu_2 - \nu_1) + 2\xi\T (\nu_2 - \nu_1) + \|\nu_1\|^2 - \|\nu_2\|^2 + 2\eta\T (\nu_2 - \nu_1) \\
    & = -\|\nu_1 - \nu_2\|^2 + 2 \eta\T (\nu_2 - \nu_1) + 2\xi\T (\nu_2 - \nu_1)
\end{align*}
We have $2 \eta\T (\nu_2 - \nu_1) \leq 2 \delta \|\nu_1 - \nu_2\| \leq \frac{2}{C} \|\nu_1 - \nu_2\|^2$. Hence the above is a normal with mean at most $(-1 + \frac{2}{C})\|\nu_1 - \nu_2\|^2 < 0$ (assuming $C > 2$) and variance equal to $4 \sigma^2\|\nu_1 - \nu_2\|^2$. By a standard bound on the normal distribution cdf \citep[see Section 2.2.1]{van1996weak} we have that 
\begin{align*}
    P(N(m, \tau^2) \geq 0) \leq \exp(-m^2/(2\tau^2)), 
\end{align*}
for $m < 0$, therefore the type I error of the test is bounded by 
\begin{align*}
    \exp\bigg(-\bigg(1 - \frac{2}{C}\bigg)^2 \frac{\|\nu_1 -\nu_2\|^2}{8 \sigma^2}\bigg) \leq \exp\bigg(-(C - 2)^2  \frac{\delta^2}{8 \sigma^2}\bigg).
\end{align*} By symmetry the same argument holds true for the type II error, namely when $\|\mu  - \nu_2\| \leq \delta$. 
\end{proof}

\begin{remark}
It is not too hard to see that this Lemma extends to centered sub-Gaussian noise. In other words if one supposes that $\xi$ satisfies $\EE \xi = 0$ and $\sup_{v \in S^{n-1}} \EE \exp(\lambda v\T \xi) \leq \exp(\overline \sigma^2\lambda^2/2)$ (where $S^{n-1}$ denotes the unit sphere in $\RR^n$) for some $\overline \sigma > 0$, the result becomes:
\begin{align*}
    \sup_{\mu: \|\mu- \nu_1\| \leq \delta}\mathbb{P}_{\mu}(\psi = 1) \vee \sup_{\mu: \|\mu- \nu_2\| \leq \delta}\mathbb{P}_{\mu}(\psi = 0) \leq \exp\bigg(-(C - 2)^2  \frac{\delta^2}{8 \overline \sigma^2}\bigg).
\end{align*}
Since Lemma \ref{important:lemma} is the only place which explicitly uses the Gaussian distribution (in the upper bound analysis), this automatically extends our upper bound results in the bounded $K$ case, for any centered sub-Gaussian noise with the change that $\sigma$ has to be substituted with the variance proxy $\overline \sigma$.
\end{remark}

Suppose now, we are given $M$ points $\nu_1,\ldots, \nu_M \in K' \subset K$ such that $\|\nu_i - \nu_j\| \geq \delta$ and $M$ is maximal\footnote[3]{We comment once again, that it is not the maximality that is important; rather it is important for the packing set to also be a covering set.}, i.e., we are given a maximal $\delta$-packing set of $K'$ and it is known that $\mu \in K' \subset K$. 

\begin{lemma}\label{most:importnant:lemma} Under the setting described above, let $i^* = \argmin_i \|Y - \nu_i\|$. We will show that the closest point to $Y$, $\nu_{i^*}$ satisfies
\begin{align*}
    \mathbb{P}(\|\nu_{i^*} - \mu\| > (C + 1)\delta) \leq M \exp(-(C - 2)^2 \delta^2/(8\sigma^2)),
\end{align*}
for any fixed $C > 2$.
\end{lemma}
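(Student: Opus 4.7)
The plan is to leverage the fact that a maximal $\delta$-packing is automatically a $\delta$-covering, combined with Lemma \ref{important:lemma}'s pairwise testing bound. First, since $\{\nu_1, \ldots, \nu_M\}$ is a maximal $\delta$-packing of $K'$, for the point $\mu \in K'$ there must exist some index $j_0$ with $\|\mu - \nu_{j_0}\| \leq \delta$ (otherwise $\mu$ could be added to the packing, contradicting maximality). This anchor point $\nu_{j_0}$ is the key object.

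Next, I would argue by contrapositive-style containment: on the event $\{\|\nu_{i^*} - \mu\| > (C+1)\delta\}$, the triangle inequality forces
\[
\|\nu_{i^*} - \nu_{j_0}\| \geq \|\nu_{i^*} - \mu\| - \|\mu - \nu_{j_0}\| > (C+1)\delta - \delta = C\delta,
\]
and by the defining property of $i^*$ we automatically have $\|Y - \nu_{i^*}\| \leq \|Y - \nu_{j_0}\|$. Hence
\[
\{\|\nu_{i^*} - \mu\| > (C+1)\delta\} \;\subseteq\; \bigcup_{i: \|\nu_i - \nu_{j_0}\| > C\delta} \{\|Y - \nu_i\| \leq \|Y - \nu_{j_0}\|\}.
\]

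For each $i$ appearing in this union, I apply Lemma \ref{important:lemma} to the pair $(\nu_{j_0}, \nu_i)$, which are separated by more than $C\delta$. Since $\|\mu - \nu_{j_0}\| \leq \delta$, the Lemma yields
\[
\mathbb{P}\bigl(\|Y - \nu_i\| \leq \|Y - \nu_{j_0}\|\bigr) \leq \exp\!\bigl(-(C-2)^2 \delta^2/(8\sigma^2)\bigr).
\]
A union bound over the (at most $M$) indices $i$ in the union concludes the proof.

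I do not foresee a real obstacle here; the only subtlety is to recognize and invoke the covering property implied by maximality of the packing, so that a well-placed anchor $\nu_{j_0}$ close to $\mu$ exists. Once that is in place, the inclusion of events and the application of Lemma \ref{important:lemma} are essentially mechanical. Note that $\nu_{j_0}$ and its distance to $\mu$ depend on $\mu$ but not on the data $Y$, which is exactly what is needed to invoke the pairwise test bound uniformly.
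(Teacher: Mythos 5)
Your proposal is correct and follows essentially the same route as the paper: identify an anchor $\nu_{j_0}$ within $\delta$ of $\mu$ via the covering property of the maximal packing, use the triangle inequality to show the bad event forces $i^*$ to land among points more than $C\delta$ from the anchor while beating it in distance to $Y$, and then apply Lemma \ref{important:lemma} with a union bound over at most $M$ indices. The paper merely packages the event inclusion through an auxiliary random variable $T_i$, which is a cosmetic difference.
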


\begin{proof}
Define the intermediate random variable
\begin{align*}
    T_i  = \begin{cases}
         \max_{j \in [M]}  \|\nu_i - \nu_j\|, \mbox{ s.t. } \|Y - \nu_i\| - \|Y - \nu_j\| \geq 0,\|\nu_i - \nu_j\| > C\delta\\
         0, \mbox{if no such $j$ exists},
    \end{cases}
\end{align*}
Without loss of generality assume that $\|\mu - \nu_i\| \leq \delta$ (here note that we have a $\delta$-packing which is also a $\delta$-covering). Next, we have that
\begin{align*}
    \mathbb{P}(\|\nu_{i^*} - \mu\| > \delta + C\delta) & \leq \mathbb{P}(i^* \in \{j: \|\nu_j - \nu_i\| > C\delta\})\\
    & \leq P(T_i > 0),
\end{align*}
where the first inequality follows by the triangle inequality and the second because if $i^* \in \{j: \|\nu_j - \nu_i\| \geq C\delta\}$ we have $T_{i} \geq \|\nu_i - \nu_{i^*}\| > C\delta$. But
\begin{align*}
    \mathbb{P}(T_i > 0) & = \mathbb{P}(\exists j: \|\nu_j - \nu_i\| > C\delta \mbox{ and } \|Y - \nu_i\| - \|Y - \nu_j\| \geq 0)
    \\ & \leq M \exp(-(C-2)^2 \delta^{2}/(8\sigma^2)),
\end{align*}
by Lemma \ref{important:lemma}. This is what we wanted to show.

\end{proof}
Finally we will need the following simple lemma.

\begin{lemma}\label{simple:lemma:monotone}The function $\varepsilon \mapsto M^{\operatorname{loc}}(\varepsilon)$ is monotone non-increasing.
\end{lemma}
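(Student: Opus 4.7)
\medskip

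The plan is to prove monotonicity via a direct scaling argument: from any $\varepsilon_2/c$-packing witnessing $M^{\operatorname{loc}}(\varepsilon_2)$, I will manufacture an $\varepsilon_1/c$-packing of the same cardinality inside $B(\theta,\varepsilon_1)\cap K$ whenever $\varepsilon_1 \le \varepsilon_2$.

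Fix $0 < \varepsilon_1 \le \varepsilon_2$. For any integer $N \le M^{\operatorname{loc}}(\varepsilon_2)$, by definition of the supremum there exists $\theta \in K$ and an $\varepsilon_2/c$-packing $\{\nu_1,\ldots,\nu_N\} \subset B(\theta,\varepsilon_2)\cap K$. Define the scaled points
\begin{align*}
    \nu_i' = \theta + \frac{\varepsilon_1}{\varepsilon_2}(\nu_i - \theta), \qquad i = 1,\ldots,N.
\end{align*}
Since $\nu_i' = (1 - \varepsilon_1/\varepsilon_2)\theta + (\varepsilon_1/\varepsilon_2)\nu_i$ is a convex combination of two points of $K$, the convexity of $K$ yields $\nu_i' \in K$. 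Moreover
\begin{align*}
    \|\nu_i' - \theta\| = \frac{\varepsilon_1}{\varepsilon_2}\|\nu_i - \theta\| \le \varepsilon_1,
\end{align*}
so $\nu_i' \in B(\theta,\varepsilon_1) \cap K$. For the separation,
\begin{align*}
    \|\nu_i' - \nu_j'\| = \frac{\varepsilon_1}{\varepsilon_2}\|\nu_i - \nu_j\| \ge \frac{\varepsilon_1}{\varepsilon_2}\cdot\frac{\varepsilon_2}{c} = \frac{\varepsilon_1}{c},
\end{align*}
so $\{\nu_i'\}_{i=1}^N$ is an $\varepsilon_1/c$-packing of $B(\theta,\varepsilon_1)\cap K$. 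Therefore $M(\varepsilon_1/c, B(\theta,\varepsilon_1)\cap K) \ge N$, which gives $M^{\operatorname{loc}}(\varepsilon_1) \ge N$. Taking $N \uparrow M^{\operatorname{loc}}(\varepsilon_2)$ (or $N$ arbitrarily large when the latter is infinite) yields $M^{\operatorname{loc}}(\varepsilon_1) \ge M^{\operatorname{loc}}(\varepsilon_2)$, as desired.

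There is essentially no obstacle here beyond bookkeeping; the only subtlety is the handling of the supremum (using an approximating sequence of $\theta$'s when the sup is not attained, which is harmless since the cardinalities are integers), and checking that both the containment and the packing conditions scale in compatible ways. The convexity of $K$ is the crucial ingredient that makes $\nu_i'$ remain in $K$ after shrinking toward $\theta$.
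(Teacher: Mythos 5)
Your proof is correct and is essentially the same argument as the paper's: both rest on the observation that the dilation $x \mapsto \theta + (\varepsilon_1/\varepsilon_2)(x-\theta)$ keeps points in $K$ by convexity, shrinks the ball radius and the separation by the same factor, and hence carries an $\varepsilon_2/c$-packing of $B(\theta,\varepsilon_2)\cap K$ to an $\varepsilon_1/c$-packing of $B(\theta,\varepsilon_1)\cap K$ (the paper phrases this as a nesting of the rescaled sets $\frac{1}{\varepsilon}(K-\theta)\cap B(0,1)$, which is the same convexity fact).
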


\begin{remark}
This lemma heavily uses the fact that $K$ is a convex set.
\end{remark}

\begin{proof}
It suffices to show that the function $\varepsilon \mapsto M(\varepsilon/c, B(\theta, \varepsilon)\cap K)$ is non-increasing for any fixed $\theta \in K$. Upon rescaling one realizes that this is equivalent to packing the set $[\frac{1}{\varepsilon}(K - \theta)]\cap B(1)$ at a $1/c$ distance, where $B(1) = B(0,1)$ is the unit ball centered at $0$. Now we will show that if $\varepsilon' < \varepsilon$ we have $[\frac{1}{\varepsilon}(K - \theta)]\cap B(1)\subset [\frac{1}{\varepsilon'}(K - \theta)]\cap B(1)$. Clearly this is implied if we showed that $\frac{1}{\varepsilon}(K - \theta)\subset\frac{1}{\varepsilon'}(K - \theta)$. Take a point $ x \in \frac{1}{\varepsilon}(K - \theta)$.  Hence $ x = (k - \theta)/\varepsilon = 0 (\varepsilon-\varepsilon')/\varepsilon + \varepsilon'/\varepsilon (k - \theta)/\varepsilon'$ for some $k \in K$. Since $0, (k - \theta)/\varepsilon' \in \frac{1}{\varepsilon'}(K - \theta)$ and the set $\frac{1}{\varepsilon'}(K - \theta)$ is convex, this completes the proof.
\end{proof}

Finally we are in a good position to show the main result regarding the estimator of Algorithm \ref{test}.

\begin{theorem} \label{upper:bound:rate}The estimator from Algorithm \ref{test} returns a vector $\nu^*$ which satisfies the following property
\begin{align*}
    \mathbb{E} \|\mu - \nu^*\|^2 \leq \bar C \varepsilon^{*2},
\end{align*}
for some universal constant $\bar C$. Here $\varepsilon^* = \varepsilon_{J^*}$ and $J^*$ is the maximal $J \geq 1$, $J \in \mathbb{N}$, such that $\varepsilon_J := \frac{d (c/2 - 3)}{2^{J -2}c}$ satisfies
\begin{align}\label{upper:bound:suff:cond}
    \frac{\varepsilon_J^2}{\sigma^2} > 16\log M^{\operatorname{loc}}\bigg(\varepsilon_J \frac{c}{(c/2-3)}\bigg) \vee 16\log 2,
\end{align}
or $J^* = 1$ if no such $J$ exists. We remind the reader that $c$ is the constant from the definition of local entropy, which is assumed to be sufficiently large.
\end{theorem}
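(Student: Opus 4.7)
The plan is to analyze Algorithm \ref{test} iteration-by-iteration, union-bound the per-step failure events via Lemma \ref{most:importnant:lemma}, and combine with the deterministic Cauchy bound from footnote 2. Set $\delta_k := d/(2^{k-1}c)$ for the packing distance at iteration $k$. Three identities drive everything: $(C+1)\delta_k = d/2^k$ is the radius of success, $(C-2)\delta_k = \varepsilon_{k+1}$ is the scale appearing in the concentration exponent, and $d/2^{k-1} = \varepsilon_{k+1}\, c/(c/2-3)$ matches the argument of $M^{\operatorname{loc}}$ in \eqref{upper:bound:suff:cond}. By Definition \ref{local:entropy:def}, the packing cardinality then satisfies $M_k \leq M^{\operatorname{loc}}(d/2^{k-1}) = M^{\operatorname{loc}}(\varepsilon_{k+1}\, c/(c/2-3))$.

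Let $A_k$ denote the event $\{\|\Upsilon_{k+1} - \mu\| \leq d/2^k\}$. An induction shows that on $\bigcap_{j<k} A_j$ we have $\mu \in B(\Upsilon_k, d/2^{k-1}) \cap K$, which is exactly the hypothesis of Lemma \ref{most:importnant:lemma}. Applying that lemma with $\delta = \delta_k$ and $C = c/2 - 1$ yields $\mathbb{P}(A_k^c \mid \bigcap_{j<k} A_j) \leq M_k \exp(-\varepsilon_{k+1}^2/(8\sigma^2))$, which under \eqref{upper:bound:suff:cond} at $J = k+1 \leq J^*$ (so that $\log M_k < \varepsilon_{k+1}^2/(16\sigma^2)$) sharpens to $\exp(-\varepsilon_{k+1}^2/(16\sigma^2))$. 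Let $\tau = \min\{k \geq 1 : A_k^c\}$, with $\tau = \infty$ on total success. Combining the bound $\|\Upsilon_k - \mu\| \leq d/2^{k-1}$ (valid on $\bigcap_{j<k} A_j$) with the Cauchy bound $\|\nu^* - \Upsilon_k\| \leq d/2^{k-2}$, the triangle inequality gives $\|\nu^* - \mu\| \leq 3d/2^{k-1}$ on $\{\tau = k\}$, and $\|\nu^* - \mu\| \leq 3d/2^{J^*-1} \asymp \varepsilon_{J^*}$ on $\{\tau \geq J^*\}$.

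Assuming $J^* \geq 2$, this gives
\begin{align*}
\mathbb{E}\|\nu^* - \mu\|^2 \lesssim \varepsilon_{J^*}^2 + \sum_{k=1}^{J^*-1} \varepsilon_{k+1}^2 \exp\bigl(-\varepsilon_{k+1}^2/(16\sigma^2)\bigr),
\end{align*}
where I used $d/2^{k-1} \asymp \varepsilon_{k+1}$. Substituting $y_k := \varepsilon_{k+1}^2/(16\sigma^2) = a \cdot 4^{J^* - k - 1}$ with $a := \varepsilon_{J^*}^2/(16\sigma^2) > \log 2$ (the $16\log 2$ half of \eqref{upper:bound:suff:cond}), the sum becomes $16 \sigma^2 \sum_{\ell = 0}^{J^*-2} a \cdot 4^\ell \exp(-a \cdot 4^\ell)$. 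Because the arguments quadruple and $a > \log 2$, successive terms shrink at least geometrically, so the sum is $\lesssim a e^{-a} \lesssim 1$. Multiplying back gives an $O(\sigma^2)$ contribution, and using $\sigma^2 < \varepsilon_{J^*}^2/(16\log 2)$ once more closes to $O(\varepsilon_{J^*}^2)$. The corner case $J^* = 1$ is immediate: $\|\nu^* - \mu\| \leq d \asymp \varepsilon_1$ since $\nu^*, \mu \in \overline K$.

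The main obstacle I anticipate is not the concentration itself but the bookkeeping: matching the algorithmic scale $d/2^{k-1}$ to the scale $\varepsilon_J$, and showing the failure-contribution sum does not blow up. The resolution leans on the observation that \eqref{upper:bound:suff:cond} at $J = J^*$ simultaneously controls $\log M^{\operatorname{loc}}$ and forces $\sigma^2 \lesssim \varepsilon_{J^*}^2$, which is exactly what is needed to convert the super-geometrically decaying sum into an $O(\varepsilon_{J^*}^2)$ remainder.
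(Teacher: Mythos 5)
Your proof is correct and follows essentially the same route as the paper's: the per-step failure bound from Lemma \ref{most:importnant:lemma} with $(C-2)\delta_k=\varepsilon_{k+1}$, the union bound over the first failing iteration (which is exactly the paper's ``telescoping''), the Cauchy-sequence bound $\|\nu^*-\Upsilon_k\|\le d/2^{k-2}$, monotonicity of $M^{\operatorname{loc}}$ to propagate \eqref{upper:bound:suff:cond} to all $J\le J^*$, and the observation that the $16\log 2$ half of \eqref{upper:bound:suff:cond} forces $\sigma^2\lesssim\varepsilon_{J^*}^2$. The only cosmetic difference is that you assemble the risk by summing directly over the stopping time $\tau$, whereas the paper first converts the per-scale bounds into a uniform tail bound $\mathbb{P}(\|\mu-\nu^*\|\ge 3\kappa x)\le \underline C\exp(-C'x^2/\sigma^2)$ for $x\ge\varepsilon^*$ and then integrates it.
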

\begin{proof}
Combining the results of Lemma \ref{most:importnant:lemma} (with $c = 2(C+1)$ where $c$ is the constant from the definition of local packing entropy) and Lemma \ref{simple:lemma:monotone} we can conclude that for any $2 \leq j \leq J$
\begin{align*}
    \mathbb{P}\bigg(\|\mu - \Upsilon_{j}\| > \frac{d}{2^{j-1}} \bigg| \|\mu - \Upsilon_{j-1}\| \leq \frac{d}{2^{j-2}}, \Upsilon_{j-1}\bigg) & \leq |M_{j-1}| \exp\bigg(-\frac{(C-2)^2d^2}{(2^{2(j-1)}(C+1)^2)8\sigma^2}\bigg)\\
    & \leq  M^{\operatorname{loc}}\bigg(\frac{d}{2^{J - 2}}\bigg)\exp\bigg(-\frac{(C-2)^2d^2}{(2^{2(j-1)}(C+1)^2)8\sigma^2}\bigg).
\end{align*}
where $M_{j-1}$ is the packing sets from Algorithm \ref{test} corresponding to $\Upsilon_{j-1}$. Since the bound does not depend on $\Upsilon_{j-1}$ we can drop it from the conditioning. Telescoping this bound (i.e., using that for $k$ events $\{A_i\}_{i \in [k]}$ such that $\PP(A_i^c) > 0, i \in [k-1]$, it always holds that $\PP(A_k) \leq \PP(A_k | A_{k-1}^c) + \PP(A_{k-1} | A_{k-2}^c) + \ldots + \PP(A_2 | A_1^c) + \PP(A_1)$, which can be proved by induction) we obtain

\begin{align}\label{important:telescoping:bound:which:is:true}
    \mathbb{P}(\|\mu - \Upsilon_{J}\| > \frac{d}{2^{J-1}}) &
    \leq M^{\operatorname{loc}}\bigg(\frac{d}{2^{J - 2}}\bigg) \sum_{j = 1}^{J-1}\exp\bigg(-\frac{(C-2)^2d^2}{(2^{2j}(C+1)^2)8\sigma^2}\bigg)\nonumber\\
    & \leq M^{\operatorname{loc}}\bigg(\frac{d}{2^{J - 2}}\bigg) a (1 + a^{4-1} + a^{16-1} + \ldots)\mathbbm{1}(J > 1)\nonumber\\
    & \leq M^{\operatorname{loc}}\bigg(\frac{d}{2^{J - 2}}\bigg) \frac{a}{1-a} \mathbbm{1}(J > 1),
\end{align}
where for brevity we put \begin{align*}a =  \exp\bigg(\frac{-(C-2)^2d^2}{(2^{2(J - 1)}(C+1)^2)(8\sigma^2)}\bigg),
\end{align*}
and we are assuming that $ a < 1$. So if one sets $\varepsilon_J = \frac{(C-2)d}{2^{J - 1}(C+1)}$, we have that if $\varepsilon_J^2/(8\sigma^2) > 2 \log M^{\operatorname{loc}}\bigg(\varepsilon_J \frac{2 (C+1)}{(C-2)}\bigg)$ and $a = \exp(-\varepsilon_J^2/(8\sigma^2)) < 1/2$, the above probability will be bounded from above by $2\exp(-\varepsilon_J^2/(16\sigma^2))$. Since $2 \log M^{\operatorname{loc}}\bigg(\varepsilon_J \frac{2 (C+1)}{(C-2)}\bigg) < 2 \bigg(\log 2 \vee \log M^{\operatorname{loc}}\bigg(\varepsilon_J \frac{2 (C+1)}{(C-2)}\bigg)\bigg)$ this condition is implied when 
\begin{align}\label{suff:condition:epsJ}
    \frac{\varepsilon_J^2}{\sigma^2} > 16\log M^{\operatorname{loc}}\bigg(\varepsilon_J \frac{2 (C+1)}{(C-2)}\bigg) \vee 16\log 2.
\end{align} 

By the triangle inequality we have that 
\begin{align}\label{mu:upislon:ineq}
    \|\nu^* - \mu\| \leq \|\nu^* - \Upsilon_J\| + \|\Upsilon_J - \mu\| \leq 3\varepsilon_J\frac{C+1}{C-2},
\end{align} with probability at least $1 - 2\exp(-\varepsilon_J^2/(16\sigma^2))$ which holds for all $J$ satisfying \eqref{suff:condition:epsJ}. Here we want to clarify that the last inequality in \eqref{mu:upislon:ineq} follows from the fact that $\|\nu^* - \Upsilon_J\| \leq d/2^{J-2}$, as seen when we verified that $\Upsilon$ forms a Cauchy sequence.
Let $J^*$ be selected as the maximum $J$ such that \eqref{suff:condition:epsJ} holds, or otherwise if such $J$ does not exist $J^* = 1$. Let $\kappa = 3 \frac{C+1}{C-2}$, $\underline{C} = 2$ and $C' = \frac{1}{16}$. We have established that the following bound holds:
\begin{align*}
    \mathbb{P}(\|\mu - \nu^*\| > \kappa \varepsilon_J) \leq \underline C \exp(-C'\varepsilon_J^2/\sigma^2) \mathbbm{1}(J > 1) \leq \underline C \exp(-C'\varepsilon_J^2/\sigma^2) \mathbbm{1}(J^* > 1),
\end{align*}
for all $1 \leq J \leq J^*$, where this bound also holds in the case when $J^* = 1$ by exception. Observe that we can extend this bound to all $J \in \mathbb{Z}$ and $J \leq J^*$, since for $J < 1$ we have $\kappa \varepsilon_J \geq 6 d$ and so 
\begin{align*}
    \mathbb{P}(\|\mu - \nu^*\| > \kappa \varepsilon_J) \leq 0 \leq \underline C \exp(-C'\varepsilon_J^2/\sigma^2) \mathbbm{1}(J^* > 1).
\end{align*}
Now for any $\varepsilon_{J-1} > x \geq \varepsilon_{J}$ for $J \leq J^*$ we have that 
\begin{align*}
    \mathbb{P}(\|\mu - \nu^*\| > 2 \kappa x) & \leq \mathbb{P}(\|\mu - \nu^*\| \geq \kappa \varepsilon_{J-1}) \leq \underline C \exp(-C'\varepsilon_{J-1}^2/\sigma^2) \mathbbm{1}(J^* > 1)\\
    & \leq \underline C \exp(-C'x^2/\sigma^2)\mathbbm{1}(J^* > 1),
\end{align*}
where the last inequality follows due to the fact that the map $x \mapsto \underline C \exp(-C'x^2/\sigma^2)$ is monotonically decreasing for positive reals. We will now integrate the tail bound:
\begin{align}\label{important:prob:bound}
 \mathbb{P}(\|\mu - \nu^*\| \geq 3 \kappa x) \leq \mathbb{P}(\|\mu - \nu^*\| > 2 \kappa x) \leq \underline C \exp(-C'x^2/\sigma^2) \mathbbm{1}(J^* > 1),
\end{align}
which holds true for $x \geq \varepsilon^*$ (for $\varepsilon^* > 0$; if $\varepsilon^* = 0$, that means $\sigma = 0$ in which case we know the algorithm outputs the correct point), where $\varepsilon^* = \varepsilon_{J^*} = \frac{(C-2)d}{(C+1) 2^{J^* - 1}}$, always (since even if $J^* = 1$ by exception, this bound is still valid).

We have
\begin{align*}
    \mathbb{E} \|\mu - \nu^*\|^2 & = \int_{0}^{\infty} 2 x \mathbb{P}(\|\mu - \nu^*\| \geq x) dx \\
    & \leq C''' \varepsilon^{*2} + \int_{3\kappa\varepsilon^*}^{\infty} 2 x \underline C \exp(-C''x^2/\sigma^2) \mathbbm{1}(J^* > 1) dx \\
    & = C''' \varepsilon^{*2} + C^{''''} \sigma^2\exp(-C'''''\varepsilon^{*2}/\sigma^2)\mathbbm{1}(J^* > 1).
\end{align*}
Now $\varepsilon^{*2}/\sigma^2$ is bigger than a constant ($16 \log 2$) otherwise $J^* = 1$. Hence the above is smaller than $\bar C \varepsilon^{*2}$ for some absolute constant $\bar C$.
\end{proof}

We will now formally illustrate that the above estimator achieves the minimax rate. The precise expression of the rate is quantified in the following result:


\begin{theorem}\label{minimax:rate:thm} Define $\varepsilon^*$ as $\sup \{\varepsilon: \varepsilon^2/\sigma^2 \leq \log M^{\operatorname{loc}}(\varepsilon)\}$, where $c$ in the definition of local entropy is a  sufficiently large absolute constant. Then the minimax rate is given by $\varepsilon^{*2} \wedge d^2$ up to absolute constant factors.
\end{theorem}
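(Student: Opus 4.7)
My plan is to combine Theorem~\ref{upper:bound:rate} with the Fano-type lemma that follows Definition~\ref{local:entropy:def}, together with trivial bounds involving $d$, to obtain matching upper and lower bounds of order $\varepsilon^{*2} \wedge d^2$.

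For the upper bound I would take the minimum of two estimators: the constant estimator $\hat\nu \equiv \mu_0 \in K$, which has worst-case risk at most $d^2$, and the estimator from Algorithm~\ref{test}, which by Theorem~\ref{upper:bound:rate} has risk at most $\bar C \varepsilon_{J^*}^2$. The crucial intermediate claim is $\varepsilon_{J^*} \lesssim \varepsilon^*$. Writing $\beta = c/(c/2-3) > 1$, I would verify condition~\eqref{upper:bound:suff:cond} at the continuous level $\varepsilon = 5\varepsilon^*$: since $5\beta\varepsilon^* > \varepsilon^*$, the monotonicity of $M^{\operatorname{loc}}$ (Lemma~\ref{simple:lemma:monotone}) combined with the supremum definition of $\varepsilon^*$ (which ensures $\log M^{\operatorname{loc}}(\varepsilon) < \varepsilon^2/\sigma^2$ for all $\varepsilon > \varepsilon^*$, hence via a right-limit $\log M^{\operatorname{loc}}(\varepsilon') \leq \varepsilon^{*2}/\sigma^2$ for every $\varepsilon' > \varepsilon^*$) yields the sharp bound $\log M^{\operatorname{loc}}(5\beta\varepsilon^*) \leq \varepsilon^{*2}/\sigma^2$. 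Therefore $16\log M^{\operatorname{loc}}(5\beta\varepsilon^*) \leq 16\varepsilon^{*2}/\sigma^2 < 25\varepsilon^{*2}/\sigma^2 = (5\varepsilon^*)^2/\sigma^2$, which is the main clause of~\eqref{upper:bound:suff:cond}. Dyadic rounding (the grid has ratio $2$) produces some $\varepsilon_{J'} \in [5\varepsilon^*, 10\varepsilon^*]$ satisfying the condition, and maximality of $J^*$ then forces $\varepsilon_{J^*} \leq \varepsilon_{J'} \leq 10\varepsilon^*$. The minimum of the two estimator guarantees now yields risk $\lesssim \varepsilon^{*2} \wedge d^2$.

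For the lower bound I apply the Fano-type lemma. When $\varepsilon^* \leq 2d$, take $\varepsilon = \varepsilon^*/2$; monotonicity and the supremum definition give $\log M^{\operatorname{loc}}(\varepsilon^*/2) \geq \log M^{\operatorname{loc}}(\varepsilon^*) \geq \varepsilon^{*2}/\sigma^2 > \varepsilon^{*2}/(2\sigma^2) = 2(\varepsilon^*/2)^2/\sigma^2$, verifying the main Fano clause. When $\varepsilon^* > 2d$, take $\varepsilon = d/2$; the same chain gives $\log M^{\operatorname{loc}}(d/2) \geq \varepsilon^{*2}/\sigma^2 \geq d^2/\sigma^2 > 2(d/2)^2/\sigma^2$. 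Each case yields a Fano lower bound of $\gtrsim \varepsilon^2$, matching $\varepsilon^{*2} \wedge d^2$.

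The main technical obstacle is the factor $16$ in~\eqref{upper:bound:suff:cond}: the naive substitution $\log M^{\operatorname{loc}}(5\beta\varepsilon^*) \leq (5\beta\varepsilon^*)^2/\sigma^2$ is too loose (it introduces an extra factor $25\beta^2$ that does not fit), so one must instead exploit monotonicity to push the argument of $M^{\operatorname{loc}}$ down to $\varepsilon^*$ and only then invoke the supremum definition. The $4\log 2$ and $16\log 2$ clauses in both conditions are handled uniformly by noting that any nondegenerate convex $K$ admits at least $\lfloor c\rfloor$ collinear packing points in $B(\theta,\varepsilon)\cap K$ at spacing $\varepsilon/c$ for every $\varepsilon \leq d$ (by taking convex combinations along a diameter of $K$), so choosing the constant $c$ in Definition~\ref{local:entropy:def} sufficiently large (as explicitly permitted by the statement) forces $\log M^{\operatorname{loc}}(\varepsilon) \geq \log c > 4\log 2$ at all relevant scales, and simultaneously ensures $\varepsilon^{*2}/\sigma^2 \geq \log c$ whenever $d \geq \sigma\sqrt{\log c}$. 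The residual regime $d \lesssim \sigma$ is closed by a standard two-point Le Cam argument paired with the trivial constant estimator, both delivering rates of order $d^2 \asymp \varepsilon^{*2}\wedge d^2$.
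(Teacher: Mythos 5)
Your proposal is correct and follows essentially the same route as the paper's own proof: Fano via the local-entropy lower bound lemma on one side, Theorem~\ref{upper:bound:rate} on the other, with Lemma~\ref{simple:lemma:monotone} and the supremum definition of $\varepsilon^*$ used to transfer the fixed-point condition to the discrete condition \eqref{upper:bound:suff:cond} (your right-limit argument is a slightly cleaner version of the paper's rescaling by $2\sqrt{C}$), and the small-diameter regime $d \lesssim \sigma$ handled separately by the equispaced-points-on-a-diameter construction with $c$ large. The only cosmetic differences are that you close the residual regime with a two-point Le~Cam bound where the paper reuses Fano, and your intermediate step $\log M^{\operatorname{loc}}(\varepsilon^*) \geq \varepsilon^{*2}/\sigma^2$ should strictly be a left-limit statement since the supremum need not be attained --- an abuse the paper's proof commits identically.
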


\begin{proof}
First suppose that $\varepsilon^*$ satisfies $\varepsilon^{*2}/\sigma^2 > 16\log 2$. Then for $\delta^* := \varepsilon^*/4$ we have $\log M^{\operatorname{loc}}(\delta^*) \geq\log M^{\operatorname{loc}}(\varepsilon^*) \geq  \varepsilon^{*2}/(2\sigma^2) + \varepsilon^{*2}/(2\sigma^2) > 8\delta^{*2}/\sigma^2 + 8\log 2$ and so this implies the sufficient condition for the lower bound. 

On the other hand we know that for a constant $C > 1$: 
\begin{align*}
    4 C \varepsilon^{*2}/\sigma^2 \geq C \log M^{\operatorname{loc}}(2\varepsilon^*) \geq C \log M^{\operatorname{loc}}(2\varepsilon^*\sqrt{C}) \geq C \log M^{\operatorname{loc}}\bigg(2\varepsilon^*\sqrt{C}\frac{c}{c/2-3}\bigg),  
\end{align*}
and so setting $\delta = 2 \varepsilon^* \sqrt{C}$ we obtain that 
\begin{align*}
    \delta^2/\sigma^2 \geq C \log M^{\operatorname{loc}}\bigg(\delta\frac{c}{c/2-3}\bigg).
\end{align*}
For $C = 16$ this will satisfy the inequality \eqref{upper:bound:suff:cond} (taking into account that $\varepsilon^{*2}/\sigma^2 > 16\log 2$, which implies $\delta^2/\sigma^2 \geq 64 \log 2 C > 16 \log 2$). Since the map $x \mapsto x^2/\sigma^2 - 16\log M^{\operatorname{loc}}\bigg(x\frac{c}{c/2-3}\bigg) \vee 16\log 2$ is non-decreasing, we have that $\delta \geq \varepsilon_{J^*}/2$. This shows that the rate in this case is $\varepsilon^{*2}$.

Next, suppose that $\varepsilon^{*}$ defined by $\sup \{\varepsilon: \varepsilon^2/\sigma^2 \leq \log M^{\operatorname{loc}}(\varepsilon)\}$ satisfies $\varepsilon^{*2}/\sigma^2 \leq 16\log 2$. For $2\varepsilon^*$, we have $64 \log 2 > 4\varepsilon^{*2}/\sigma^2 \geq \log M^{\operatorname{loc}}(2 \varepsilon^*)$. If $c$ in the definition of local packing is large enough, we could put points in the diameter of the ball with radius $2 \varepsilon^*$ such that the packing set has more than $\exp(64\log 2)$ many points. But that implies that the set $K$ is entirely inside a ball of radius $\sqrt{(64\log 2)} \sigma$ (as $\varepsilon^{*2} \leq 16\log 2 \sigma^2 $). In such a case, for the lower bound, we could pick $\varepsilon$ to be proportional to the diameter of the set (with a small proportionality constant). That will ensure that $\varepsilon/\sigma$ is upper bounded by some constant (as $2\sqrt{(64 \log 2)}\sigma$ is bigger than the diameter), and at the same time $\log M^{\operatorname{loc}}(\varepsilon)$ can be made bigger than a constant (provided that $c$ in the definition of a local packing is large enough) -- by taking $\theta$ (where $\theta$ is the center of the localized set $B(\theta, \varepsilon) \cap K$) to be the midpoint of a diameter of the set $K$ and then placing equispaced points on the diameter. Hence the diameter of the set is a lower bound (up to constant factors) in this case, which is of course always an upper bound too (up to constant factors). So we conclude that either for $\varepsilon^{*}$ defined by $\sup \{\varepsilon: \varepsilon^2/\sigma^2 \leq \log M^{\operatorname{loc}}(\varepsilon)\}$ satisfies $\varepsilon^{*2}/\sigma^2 > 16\log 2$ or the lower and upper bounds are of the order of the diameter of the set. In summary the rate is given by the $\varepsilon^{*2} \wedge d^2$. This is true since in the second case, $4\varepsilon^*$ is bigger than the diameter of the set.
\end{proof}

In practice it may be challenging to calculate $\varepsilon^*$ precisely, but the following lemma can be useful. 

\begin{lemma} Suppose that $\varepsilon$ and $\varepsilon'$ are such that $\varepsilon^2/\sigma^2 > \log M^{\operatorname{loc}}(\varepsilon)$ and $\varepsilon^{'2}/\sigma^2 < \log M^{\operatorname{loc}}(\varepsilon')$ and $\varepsilon \asymp \varepsilon'$. Then the rate is given by $\varepsilon^2 \wedge d^2$.
\end{lemma}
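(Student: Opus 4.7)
The plan is to show that the sandwich hypothesis on $\varepsilon$ and $\varepsilon'$ pins down $\varepsilon^*$ (as defined in Theorem \ref{minimax:rate:thm}) up to constant factors, at which point the conclusion is immediate from that theorem.

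First I would observe that the defining inequality $\varepsilon'^2/\sigma^2 < \log M^{\operatorname{loc}}(\varepsilon')$ says that $\varepsilon'$ itself is admissible in the supremum defining $\varepsilon^*$, so trivially $\varepsilon^* \geq \varepsilon'$. Next I would use Lemma \ref{simple:lemma:monotone} (monotonicity of $\varepsilon \mapsto M^{\operatorname{loc}}(\varepsilon)$) to get an upper bound on $\varepsilon^*$: for every $x \geq \varepsilon$ we have
\begin{align*}
    \frac{x^2}{\sigma^2} \;\geq\; \frac{\varepsilon^2}{\sigma^2} \;>\; \log M^{\operatorname{loc}}(\varepsilon) \;\geq\; \log M^{\operatorname{loc}}(x),
\end{align*}
so no such $x$ participates in the supremum, i.e., $\varepsilon^* \leq \varepsilon$.

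Combining the two bounds with the assumption $\varepsilon \asymp \varepsilon'$ gives $\varepsilon^* \asymp \varepsilon \asymp \varepsilon'$, and therefore $\varepsilon^{*2} \wedge d^2 \asymp \varepsilon^2 \wedge d^2$. Invoking Theorem \ref{minimax:rate:thm}, which identifies the minimax rate with $\varepsilon^{*2} \wedge d^2$ up to absolute constants, finishes the argument.

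There is essentially no hard step; the only thing worth double-checking is that the monotonicity lemma is stated in the right direction so that it can be applied at the point $x = \varepsilon$ to conclude $\log M^{\operatorname{loc}}(x) \leq \log M^{\operatorname{loc}}(\varepsilon)$ for $x \geq \varepsilon$, which is precisely the content of Lemma \ref{simple:lemma:monotone}. A minor edge case to address in one line is when $\varepsilon^{*2}/\sigma^2 \leq 16 \log 2$, in which case Theorem \ref{minimax:rate:thm} delivers the diameter bound $d^2$; but since the final rate is expressed as a minimum with $d^2$, the conclusion $\varepsilon^2 \wedge d^2$ remains correct either way.
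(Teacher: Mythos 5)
Your proposal is correct and follows essentially the same route as the paper, which simply notes that $\varepsilon \geq \varepsilon^* \geq \varepsilon'$ follows from the definition of $\varepsilon^*$ and concludes via Theorem \ref{minimax:rate:thm}. The only difference is that you make explicit the appeal to Lemma \ref{simple:lemma:monotone} needed to justify $\varepsilon^* \leq \varepsilon$, which the paper leaves implicit.
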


\begin{proof} It is clear from the definition of $\varepsilon^*$ that $\varepsilon \geq \varepsilon^*$ while $\varepsilon' \leq \varepsilon^*$. Since $\varepsilon \asymp \varepsilon'$ it follows that $\varepsilon \asymp \varepsilon^*$ which grants the result.
\end{proof}


\begin{remark}
It should be clear that $M^{\operatorname{loc}}(\varepsilon)$ can be bounded using Sudakov minoration to yield an upper bound on the minimax rate. We give details in this remark as follows. Suppose that $\frac{\varepsilon^2}{\sigma^2} \geq 4 c^{-2} \log M^{\operatorname{loc}}(\varepsilon)$. Clearly upon rescaling such an $\varepsilon$ (by $c/2$) we can obtain $\varepsilon' = \frac{\varepsilon c}{2}$ (which is of the same order) and is $\geq \varepsilon^*$. The latter follows by the fact that $\frac{(\varepsilon c)^2}{4\sigma^2} \geq \log M^{\operatorname{loc}}(\varepsilon) \geq \log M^{\operatorname{loc}}(\frac{\varepsilon c}{2})$ since $c$ is sufficiently large. By Sudakov minoration we have $\log M^{\operatorname{loc}}(\varepsilon) \leq sup_{\theta \in K} \frac{w(B(\theta,\varepsilon) \cap K)^2}{\varepsilon^2/c^2}$, where $w$ denotes the Gaussian width \citep[see Section 5]{wainwright2019high}. It follows that if there exists an $\varepsilon$ such that $\frac{\varepsilon^2}{2\sigma} \geq sup_{\theta \in K} w(B(\theta,\varepsilon) \cap K)$ the minimax rate is upper bounded by $\varepsilon^2 \wedge d^2$. An alternative way of seeing that this upper bound on the minimax rate holds, is to use Theorem 2.3. of \cite{bellec2018sharp}, which shows that the constrained LSE grants this rate. We will also see in our examples, that there exists another universal upper bound on the minimax rate in terms of Kolmogorov complexity. An alternative way of seeing that bound, will be to use the projection estimator $P Y$ where $P$ is an orthogonal projection selected in a certain way (cf. Section \ref{upper:bound:example:section} for more details). 
\end{remark}

\section{Examples} \label{examples:section}

We now consider several examples, which have been studied previously; nevertheless we find it enlightening to study them from this new perspective. Our examples are also meant to show the reader a couple of methods one can utilize to attain bounds on the local entropy of the constraint set. In addition we will consider an example of convex weak $\ell_p$ balls, and an example of bounded polytopes with $N$ vertices, both of which have not been previously studied to the best of our knowledge. The first example we consider below is concerned with hyperrectangles.

\subsection{Hyperrectangles}

Let $K = \prod_{i = 1}^n \bigg[-\frac{a_i}{2}, \frac{a_i}{2}\bigg] \subset \mathbb{R}^n$ be a hyperrectangle. Without loss of generality we will assume that $0 < a_1 \leq a_2 \leq \ldots \leq a_n$. We will show that the following result holds:
\begin{corollary}
The rate when $K$ is a hyperrectangle as above is given by $(k + 2) \sigma^2 \wedge d^2$ (for $d^2 = \sum_{i \in [n]} a_i^2$) where $k \in \{0,\ldots,n-1\}$ is such that $(k + 1) \sigma^2 \leq \sum_{i = 1}^{n-k} a_i^2$ but $(k + 2)\sigma^2 > \sum_{i = 1}^{n-k-1} a_i^2$, and in the case when $\sum_{i = 1}^n a_i^2 \leq \sigma^2$ the rate is $d^2$.
\end{corollary}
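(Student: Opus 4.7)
The plan is to invoke Theorem \ref{minimax:rate:thm} and estimate the local entropy $\log M^{\operatorname{loc}}(\varepsilon)$ of the hyperrectangle $K$ tightly enough to conclude $\varepsilon^{*2} \wedge d^2 \asymp (k+2)\sigma^2 \wedge d^2$. The key identity is the defining property of $k$ itself: $\sum_{i=1}^{n-k-1} a_i^2 < (k+2)\sigma^2 \leq \sum_{i=1}^{n-k} a_i^2$. This invites splitting the coordinates into a ``small'' set $J = \{1,\ldots,n-k-1\}$, whose total squared length is controlled by $(k+2)\sigma^2$, and a ``large'' set $I = \{n-k,\ldots,n\}$ of cardinality $k+1$.

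For the upper bound on $\varepsilon^{*2}$, I would evaluate $\log M^{\operatorname{loc}}(\varepsilon)$ at $\varepsilon^2 = 4c^2(k+2)\sigma^2$, where $c$ is the constant from Definition \ref{local:entropy:def}. Since $|x_j - y_j| \leq a_j$ for any $x,y \in K$, the $J$-coordinates of any pair in $B(\theta,\varepsilon)\cap K$ satisfy $\|x_J - y_J\|^2 \leq \sum_J a_j^2 < \varepsilon^2/(4c^2)$. Thus any pair separated by $\varepsilon/c$ in $\ell_2$ norm must differ by at least $\varepsilon\sqrt{3}/(2c)$ in its $I$-projection; since this projection lies in a $(k+1)$-dimensional Euclidean ball of radius $\varepsilon$, a volume-ratio packing argument gives $\log M^{\operatorname{loc}}(\varepsilon) \leq (k+1)\log(C_0 c)$. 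For $c$ sufficiently large this is strictly less than $\varepsilon^2/\sigma^2 = 4c^2(k+2)$, so $\varepsilon^{*2} \leq 4c^2(k+2)\sigma^2$ and the desired upper bound on the minimax risk follows.

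For the lower bound I would construct a local packing via signed vertices. Set $b_i = c_0 \min(a_i, \sigma)$ with $c_0$ a small absolute constant, so that each $v(\epsilon) = (\epsilon_i b_i)_{i=1}^n$ for $\epsilon \in \{-1,+1\}^n$ lies in $K$; an elementary algebraic check using the inequalities defining $k$ shows $\|v\|^2 = c_0^2 \sum \min(a_i^2,\sigma^2) \asymp c_0^2\big((k+2)\sigma^2 \wedge d^2\big)$, so after choosing $c_0$ small enough, every vertex lies in $B(0, \varepsilon_0)$ for $\varepsilon_0 \asymp \sqrt{(k+2)\sigma^2 \wedge d^2}$. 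A Varshamov--Gilbert-style probabilistic argument, combined with Hoeffding concentration for the Rademacher sums $\|v(\epsilon) - v(\epsilon')\|^2 = 4\sum_i Z_i b_i^2$ with independent $Z_i \sim \text{Bernoulli}(1/2)$, produces $\exp(c_1(k+2))$ sign patterns that are pairwise $\varepsilon_0/c$-separated; the needed ``effective support'' bound $\|b\|_2^4/\|b\|_4^4 \gtrsim k+2$ is immediate from $b_i \leq c_0\sigma$. Hence $\log M^{\operatorname{loc}}(\varepsilon_0) \gtrsim k+2 \gtrsim \varepsilon_0^2/\sigma^2$, and Lemma 2 (Fano) delivers the matching lower bound on the minimax risk. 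The edge case $\sum a_i^2 \leq \sigma^2$ is routine: any fixed point in $K$ achieves risk $\leq d^2$, and a Fano argument with a local packing of $K$ at scale comparable to $d$ gives the matching lower bound.

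The main obstacle is the lower bound construction: verifying that the effective support $\|b\|_2^4/\|b\|_4^4$ is indeed of order $k+2$ must be done carefully, and the corner case where a few coordinates $a_i$ dominate the geometry (so the vertex construction loses too much) has to be handled by falling back to a one-dimensional lower bound along the dominant axis. Matching $\sum \min(a_i^2,\sigma^2) \asymp (k+2)\sigma^2 \wedge d^2$ up to constants also demands some careful algebra using the inequalities defining $k$.
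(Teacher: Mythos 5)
Your proposal is correct. The upper bound is essentially the paper's argument: split the coordinates at index $n-k-1$ using the defining inequality $\sum_{i=1}^{n-k-1}a_i^2<(k+2)\sigma^2$, note that any $\varepsilon/c$-separated pair must remain well separated in its projection onto the last $k+1$ coordinates, and bound the packing of a $(k+1)$-dimensional ball; your Pythagorean version of this step is a cosmetic variant of the paper's triangle-inequality version. The lower bound, however, takes a genuinely different route. The paper greedily groups the small coordinates to build a $\lceil(k+1)/2\rceil$-dimensional hyperrectangle with all sides at least $\sigma$, inscribes a hypercube of side $\sigma$, and packs it by a volume-ratio computation with Stirling's formula. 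You instead pack the inscribed hyperrectangle $\prod_i[-b_i,b_i]$, $b_i=c_0\min(a_i,\sigma)$, by signed vertices and a Varshamov--Gilbert/Hoeffding argument keyed to the effective dimension $\|b\|_2^4/\|b\|_4^4$. Both work: your route avoids the greedy grouping and treats all coordinates uniformly, at the price of the algebra identifying $\sum_i\min(a_i^2,\sigma^2)\asymp(k+2)\sigma^2$ (which does go through --- the number $s$ of coordinates with $a_i\geq\sigma$ satisfies $s\leq 2k+2$ because coordinates $n-s+1,\ldots,n-k-1$ each contribute at least $\sigma^2$ to $\sum_{i=1}^{n-k-1}a_i^2<(k+2)\sigma^2$, so $\sum_i\min(a_i^2,\sigma^2)=\sum_{i\le n-s}a_i^2+s\sigma^2$ is squeezed between $(k+1)\sigma^2$ and roughly $(3k+4)\sigma^2$). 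Note also that your worry about a ``dominant axis'' corner case is unfounded: since $b_i\leq c_0\sigma$ one always has $\|b\|_2^4/\|b\|_4^4\geq\|b\|_2^2/(c_0^2\sigma^2)\gtrsim k+2$, so the vertex construction never degenerates; the only residual issue is the $\log 2$ threshold in Fano's inequality when $k$ is a small constant, which is handled exactly as in the proof of Theorem \ref{minimax:rate:thm} by rescaling and placing equispaced points, the same device both you and the paper use for the case $\sum_i a_i^2\leq\sigma^2$.
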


\subsubsection{Upper Bound}

For the upper bound it suffices to consider the case when $\sum_{i = 1}^n a_i^2 > \sigma^2$ (otherwise the rate is $d^2$ which can trivially be achieved).

Suppose we select $\varepsilon > c' \sqrt{k + 2} \sigma$, for $c'$ being a large constant. We need to make an $\varepsilon/c$ packing of the set $B(\theta,\varepsilon) \cap K$ for any $\theta \in K$. Suppose $M_\theta$ is the corresponding packing set. Take any two points $x,y\in M_\theta$. We have
\begin{align*}
    \varepsilon/c & \leq \|x-y\| \leq \|x_1^{n-k-1} - y_1^{n-k-1}\| + \|x_{n-k}^{n} - y_{n-k}^{n}\|\\
    & \leq \sqrt{\sum_{i = 1}^{n-k-1} a_i^2} + \|x_{n-k}^{n} - y_{n-k}^{n}\|\\
    & \leq \sqrt{k+2}\sigma + \|x_{n-k}^{n} - y_{n-k}^{n}\|,
\end{align*}
where we denoted by $x_{l}^m = (x_l, x_{l + 1}, \ldots, x_m)\T$. Hence for a large enough $c'$ we will have 
\begin{align*}
     \|x_{n-k}^{n} - y_{n-k}^{n}\| \geq \varepsilon/c'',
\end{align*}
where $c'' = (c'/c - 1)$. This means, that the packing set, also forms a $\varepsilon/c''$ packing on the last $k + 1$ coordinates. However, this set can at most be a $(k + 1)$-sphere with radius $\varepsilon$, and so such a packing number will be bounded by $(k + 1)\log (1 + 2c'') \ll (c' \sqrt{k + 2})^2$ \citep{wainwright2019high} for a large $c'$.

\subsubsection{Lower Bound}

Next for the lower bound, we will show a lemma first.
\begin{lemma}The log cardinality of a maximal packing set of a $k$-dimensional hypercube with side length $\sigma$, to a distance $\sqrt{k}\sigma/c$ for some sufficiently large $c$, is at least $\bar c k$ for some $\bar c > 0$.
\end{lemma}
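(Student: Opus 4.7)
The plan is a direct volume–comparison (packing–covering duality) argument. Write $K_\sigma = [-\sigma/2, \sigma/2]^k$, so $\operatorname{vol}(K_\sigma) = \sigma^k$. Let $\{x_1,\ldots,x_N\}$ denote a maximal $r$-packing of $K_\sigma$ where $r := \sqrt{k}\sigma/c$. By maximality, no point of $K_\sigma$ can be at distance $> r$ from every $x_i$ (otherwise it could be appended), so the $r$-balls around the packing points cover $K_\sigma$. Bounding the measure of a union by the sum of measures, and using that each ball has measure at most $\operatorname{vol}(B_k(r))$, yields
\begin{align*}
\sigma^k \;=\; \operatorname{vol}(K_\sigma) \;\le\; N \cdot \operatorname{vol}(B_k(r)) \;=\; N \cdot \frac{\pi^{k/2}}{\Gamma(k/2 + 1)}\,r^k.
\end{align*}

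Next I would substitute $r = \sqrt{k}\sigma/c$ and invoke the Stirling lower bound $\Gamma(k/2+1) \geq \sqrt{\pi k}\,(k/(2e))^{k/2}$ to eliminate $k$. After cancelling the $\sigma^k$ and $k^{k/2}$ factors this gives
\begin{align*}
N \;\ge\; \frac{\Gamma(k/2+1)}{\pi^{k/2}(\sqrt{k}/c)^k} \;\ge\; \sqrt{\pi k}\,\left(\frac{c}{\sqrt{2\pi e}}\right)^{\!k}.
\end{align*}
Taking logarithms, $\log N \ge k\log(c/\sqrt{2\pi e}) + \tfrac12\log(\pi k)$. Since the constant $c$ from Definition \ref{local:entropy:def} is assumed sufficiently large, we may in particular assume $c > \sqrt{2\pi e}$, so that $\log(c/\sqrt{2\pi e}) > 0$, giving $\log N \ge \bar c\, k$ with $\bar c := \log(c/\sqrt{2\pi e})$, which is the desired conclusion.

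There is no real obstacle here; the only thing to be careful about is the implicit constant on $c$, which is absorbed into the standing assumption that $c$ is sufficiently large. As an alternative route one could bypass the ball-volume formula and use a Varshamov–Gilbert construction restricted to the binary vertices $\{-\sigma/2,\sigma/2\}^k \subset K_\sigma$: the pairwise Euclidean distance between two such vertices equals $\sigma\sqrt{d_H}$, where $d_H$ is the Hamming distance, so a binary code of minimum Hamming distance $\lceil k/c^2\rceil$ produces the required Euclidean packing at distance $\sqrt{k}\sigma/c$, and the Varshamov–Gilbert bound guarantees $\log_2$-cardinality at least $(1 - H(1/c^2))k$, which is $\ge \bar c\, k$ for $c$ large. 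Either path delivers the linear-in-$k$ lower bound on the log packing number.
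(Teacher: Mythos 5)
Your proof is correct and follows essentially the same route as the paper: a volume comparison between the hypercube and the Euclidean ball of radius $\sqrt{k}\sigma/c$, followed by Stirling's bound on $\Gamma(k/2+1)$ to extract the linear-in-$k$ lower bound with $\bar c = \log\bigl(c/\sqrt{2\pi e}\bigr)$. The only differences are cosmetic — you derive the volume inequality from the covering property of a maximal packing rather than citing it, and you treat all $k$ uniformly via the Gamma-function form of Stirling instead of splitting into the $k=1$, even, and odd cases as the paper does; the Varshamov–Gilbert alternative you sketch is also valid but not needed.
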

\begin{proof} For $k = 1$ the assertion is obviously true, so we assume $k \geq 2$. We know that the packing number is at least the ratio between the volumes \citep{wainwright2019high}. The volume of the hypercube is $\sigma^k$. The volume of a sphere of radius $\sqrt{k}\sigma/c$ is $\frac{(\sqrt{k}\sigma/c)^k \pi^{k/2}}{\Gamma(k/2 + 1)}$. Taking the ratio we obtain
\begin{align*}
    \frac{c^k \Gamma(k/2 + 1)}{\sqrt{k}^k \pi^{k/2}}.
\end{align*}
If $k$ is even, by Stirling's approximation 
\begin{align*}
    \Gamma(k/2 + 1) = (k/2)! > \sqrt{2\pi} (k/2)^{k/2 + 1/2} \exp(-k/2)\exp(1/(6k + 1)).
\end{align*} For $c$ large enough, the log of the ratio can then be lower bounded by $k \log[ c/(\sqrt{2\pi}\exp(1/2))] + \frac{1}{2}\log(k/2) + \log(\sqrt{2\pi})- \frac{1}{6k + 1}$. On the other hand, for odd $k$, since $\Gamma$ is increasing (on the interval $[2,\infty)$), we have $\Gamma(k/2 + 1) \geq \Gamma((k-1)/2 + 1) > \sqrt{2\pi} ((k-1)/2)^{(k-1)/2 + 1/2} \exp(-(k-1)/2)\exp(1/(6(k-1) + 1))$, so that the same conclusion holds.
\end{proof}
Going back to the lower bound let us first suppose that $d^2 > \sigma^2$. We will now construct a $\lceil (k + 1)/2\rceil$-dimensional hyperrectangle with side length at least $\sigma$ out of the given points. First, assume that $s$ of the $a_i^2$ are at least $\sigma^2$. If $s \geq k$ then we can build a $k$-dimensional hyperrectangle of side lengths at least $\sigma$. In case $s < k$, we know all of the remaining $n-s$ coordinates are $< \sigma$. Hence by greedily taking coordinates until we reach $\sigma^2$ (and note that any such summation will be smaller than $2\sigma^2$) we can construct a hyperrectangle of dimension at least $\lceil (k + 1)/2\rceil$ with sides at least $\sigma$ (here we are using the fact that $(k + 1) \sigma^2 \leq \sum_{i = 1}^{n-k} a_i^2$ by assumption). If we build a sphere centered at the center of this hyperrectangle of radius $\sqrt{\lceil (k + 1)/2\rceil}\sigma$, this sphere contains a hypercube of side $\sigma$, which is fully inside the hyperrectangle. When $c$ from the definition of local packing is sufficiently large, this hypercube can be packed with at least $\exp(\bar c \lceil (k + 1)/2\rceil)$ points according to the lemma above. Hence for $\varepsilon' = \sqrt{\lceil (k + 1)/2\rceil}\sigma$ we have $\varepsilon^{'2}/\sigma^2 \lesssim \log M^{\operatorname{loc}}(\varepsilon')$. Thus by rescaling $\varepsilon'$ we can obtain $\varepsilon^{'2}/\sigma^2 <  \log M^{\operatorname{loc}}(\varepsilon')$. Hence the conclusion.

The last case is to consider $d^2 < \sigma^2$. This case can be handled by the same logic, as in the proof of Theorem \ref{minimax:rate:thm} since $d < \sigma$. This completes the proof. 

\subsection{Ellipses}

Next we consider the example of ellipses. Let $K = \{x : \sum_i \frac{x_i^2}{a_i} \leq 1\}$, where we assume $0 < a_1 \leq \ldots \leq a_n$. Define the Kolmogorov width \citep{pinkus2012n} as
\begin{align}\label{K:width}
    d_k(K) = \min_{P \in \mathcal{P}_{k}}\max_{\theta \in K} \|P \theta - \theta \|,
\end{align}
where $\mathcal{P}_k$ denotes the set of all $k$-dimensional linear projections. It is known that that $d_k(K) = \sqrt{a_{n - k}}$, where $a_0 = 0$ \cite[see, e.g.,][and references therein]{wei2020gauss}. Below we will show the following result:
\begin{corollary}
The minimax rate for ellipses is $(k + 1)\sigma^2 \wedge d^2$, where $k \in [n]$ is such that ${a_{n-k}} \leq (k + 1)\sigma^2$ but ${a_{n-k + 1}} > k\sigma^2$, or $d^2$ in the case ${a_n} \leq \sigma^2$.
\end{corollary}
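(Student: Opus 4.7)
The plan is to derive the rate directly from Theorem \ref{minimax:rate:thm} by producing matching upper and lower bounds on the local entropy $\log M^{\operatorname{loc}}(\varepsilon)$ at the scale $\varepsilon \asymp \sqrt{k+1}\sigma$. The argument will closely parallel the hyperrectangle example, but with the ellipsoidal geometry handled through the simple inequality $\sum_i (x_i - y_i)^2/a_i \leq 4$ valid for every $x,y \in K$ (immediate from $\sum x_i^2/a_i, \sum y_i^2/a_i \leq 1$ and $\|x-y\|^2 \leq 2(\|x\|^2+\|y\|^2)$ applied coordinatewise in the ellipsoidal norm).

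For the upper bound on the local entropy, I would split any two packing points $x,y$ into a bulk part on coordinates $i\leq n-k$ and a tail part on the last $k$ coordinates. Using the ellipsoidal inequality and the hypothesis $a_{n-k}\leq (k+1)\sigma^2$ gives
\begin{equation*}
\|x_1^{n-k}-y_1^{n-k}\|^2 \;\leq\; a_{n-k} \sum_{i\leq n-k}\frac{(x_i-y_i)^2}{a_i} \;\leq\; 4\,a_{n-k} \;\leq\; 4(k+1)\sigma^2.
\end{equation*}
Consequently, for $\varepsilon > c'\sqrt{k+1}\sigma$ with $c'$ large enough, any $\varepsilon/c$-packing of $B(\theta,\varepsilon)\cap K$ induces an $\Omega(\varepsilon)$-packing on the last $k$ coordinates, which lie in a Euclidean ball of radius at most $2\varepsilon$. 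Standard volumetric packing in $k$ dimensions then yields $\log M^{\operatorname{loc}}(\varepsilon)\lesssim k$, which is $\ll \varepsilon^2/\sigma^2$ once $c'$ is chosen sufficiently large. Theorem \ref{minimax:rate:thm} converts this into the upper bound $\varepsilon^{*2}\lesssim (k+1)\sigma^2$.

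For the lower bound I would exploit that $K$ contains the $k$-dimensional Euclidean ball of radius $\sqrt{a_{n-k+1}}$ supported on the last $k$ coordinates. The hypothesis $a_{n-k+1} > k\sigma^2$ makes this radius exceed $\sqrt{k}\sigma$, so with $\varepsilon=\sqrt{k}\sigma$ the set $B(0,\varepsilon)\cap K$ contains a $k$-dimensional Euclidean ball of radius $\sqrt{k}\sigma$. The usual volume ratio packs it at scale $\varepsilon/c$ with at least $c^{k}$ points once $c$ is sufficiently large, giving $\log M^{\operatorname{loc}}(\sqrt{k}\sigma)\gtrsim k$. A small rescaling of $\varepsilon$ then forces $\varepsilon^2/\sigma^2\leq \log M^{\operatorname{loc}}(\varepsilon)$, so $\varepsilon^*\gtrsim \sqrt{k}\sigma \asymp \sqrt{k+1}\sigma$ (using $k\geq 1$ in this regime). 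The remaining case $a_n\leq \sigma^2$ places the entire ellipse inside a ball of radius $\sqrt{a_n}\leq \sigma$, which falls into the diameter-dominated branch $\varepsilon^{*2}\wedge d^2 = d^2$ of Theorem \ref{minimax:rate:thm}.

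The only subtlety I anticipate is aligning the bulk/tail split exactly with the two hypotheses $a_{n-k}\leq (k+1)\sigma^2$ and $a_{n-k+1}>k\sigma^2$, so that the upper and lower bounds use the same number of "large" coordinates up to a shift of one; once the ellipsoidal triangle inequality is in place, the rest is essentially volumetric packing in $k$ dimensions and no deeper obstacle should arise.
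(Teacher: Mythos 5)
Your proposal is correct and follows essentially the same route as the paper: an upper bound on the local entropy by splitting packing points into the first $n-k$ coordinates (controlled by $a_{n-k}\leq(k+1)\sigma^2$) and the last $k$ coordinates (a $k$-dimensional volumetric packing), and a lower bound from the $k$-dimensional ball of radius $\sqrt{a_{n-k+1}}>\sqrt{k}\sigma$ contained in the ellipse, with the $a_n\leq\sigma^2$ case deferred to the diameter argument of Theorem \ref{minimax:rate:thm}. The only cosmetic difference is that you bound the bulk part directly via the inequality $\sum_i(x_i-y_i)^2/a_i\leq 4$ and monotonicity of the $a_i$, whereas the paper invokes the Kolmogorov width $d_k(K)=\sqrt{a_{n-k}}$, which for an ellipse is realized by exactly the coordinate projection you use.
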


\subsubsection{Upper Bound}

The upper bound proof is very similar to the bound for the hyperrectangles. We will only focus on the case $a_n > \sigma^2$ as otherwise the upper bound is trivial. Suppose $\varepsilon^2 > C k \sigma^2$. We need an $\varepsilon/c$ packing set. Take two points $x,y$ in that packing set and let $P$ be the projection achieving the $\min$ in \eqref{K:width}. We have
\begin{align*}
    \varepsilon/c \leq \|x- y\| \leq \|x -  Px - y + Py\| + \|Px - Py\|\leq 2 d_k(K) + \|Px - Py\|
\end{align*}
But $d^2_k(K) \leq (k + 1) \sigma^2$ so when $C$ is sufficiently large we have
\begin{align*}
     \|Px - Py\| \geq \varepsilon/c''.
\end{align*}
But this is a $k$-dimensional set, which is at most a $k$-sphere, which means that the packing set is of cardinality at most $k C''$. Hence by potentially rescaling $\varepsilon$ to some bigger value, we will obtain $\varepsilon^2/\sigma^2 > \log M^{\operatorname{loc}}(\varepsilon)$.

\subsubsection{Lower Bound}

For the lower bound, observe that the ellipse, contains a $k$-dimensional ball of radius $\sqrt{k\sigma^2}$. This can be seen by setting the first $n-k$ coefficients to $0$ and then having the set 
\begin{align*}
    \sum_{i \geq n- k + 1} \frac{x_i^2}{a_i} \leq 1,
\end{align*}
and since $a_{n - k + 1} \geq k\sigma^2$ we have the ball inside. This ball can be packed with at least $kC$ log-packing. Hence the lower bound upon rescaling $\varepsilon^2 = k\sigma^2$ down a bit. 

The only case that we have not handled is if $a_i \leq \sigma^2$ for all $i$ (which implies that the diameter is also smaller than $\sigma$). But that can be handled as in Theorem \ref{minimax:rate:thm} to yield a rate equal to the diameter of the set.

It is worth pointing out here that the LSE fails to be minimax optimal for certain ellipses. This is shown in \cite{zhang2013nearly} for instance, see their Lemma 7. For a different example of when the LSE fails refer to \cite{chatterjee2014new}.

\subsection{Compact Orthosymmetric Quadratically Convex Sets}

In this section we consider an example of sets which was first proposed and analyzed in \cite{donoho1990minimax}. The compact convex set $K$ is called orthosymmetric if for $x = (x_1, \ldots, x_n)\T \in K$ we have $(\pm x_1, \ldots, \pm x_n)\T \in K$ for all possible choices of $\pm$. The set is called quadratically convex if $K^2 := \{x^2 : x \in K\}$ is a convex set, where $x^2$ is $x$ squared entry-wise. Examples of such sets are hyperrectangles and ellipses. For even more examples refer to \cite{donoho1990minimax}. We have

\begin{corollary}
Using the definition of Kolmogorov widths the minimax rate is given by $(k + 1)\sigma^2 \wedge d_0(K)^2$ where $k$ is such that $d_k(K)^2 \leq (k + 1)\sigma^2$ but $d^2_{k-1}(K) > k \sigma^2$. If $d_0(K)^2 \leq \sigma^2$ we have that the rate is $d_0(K)^2$ which is up to constants the diameter of the set. 
\end{corollary}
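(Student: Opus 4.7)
The plan is to reuse the ellipse proof almost verbatim, replacing the explicit formula $d_k(K) = \sqrt{a_{n-k}}$ by the abstract Kolmogorov width and invoking the structural features of orthosymmetric quadratically convex sets only in the lower bound, where they are truly needed.

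For the upper bound, I would fix $\varepsilon^2 > C(k+1)\sigma^2$ for a sufficiently large absolute constant $C$ and let $P \in \mathcal{P}_k$ be an orthogonal projection realising $d_k(K)$. For any two points $x,y$ of a local $\varepsilon/c$-packing of $B(\theta,\varepsilon)\cap K$, the triangle inequality yields
\begin{align*}
\varepsilon/c \leq \|x-y\| \leq \|(I-P)(x-y)\| + \|Px - Py\| \leq 2 d_k(K) + \|Px - Py\|,
\end{align*}
and since $d_k(K)^2 \leq (k+1)\sigma^2 \leq \varepsilon^2/C$, a large $C$ forces $\|Px-Py\| \geq \varepsilon/c''$. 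Because the image of $P$ is $k$-dimensional, the projected packing lives in a Euclidean ball of radius $\leq \varepsilon$ in $\mathbb{R}^k$ and hence has cardinality at most $\exp(C'k)$ by the standard volumetric bound. After a harmless rescaling of $\varepsilon$ this produces an $\varepsilon$ satisfying $\varepsilon^2/\sigma^2 > \log M^{\operatorname{loc}}(\varepsilon)$, so $\varepsilon^{*2} \lesssim (k+1)\sigma^2$, exactly as for ellipses.

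The main obstacle is the lower bound: simply knowing $d_{k-1}(K)^2 > k\sigma^2$ is not enough, since one needs to extract from $K$ a concrete geometric object that can be packed with exponentially many points. My plan is to invoke the classical fact, due to \cite{donoho1990minimax}, that for orthosymmetric quadratically convex bodies the Kolmogorov widths are up to constants attained by \emph{coordinate} projections, so that $d_{k-1}(K)^2$ is equivalent to the largest squared Euclidean radius of $K$ restricted to any coordinate subspace of codimension $k-1$. Applied to the hypothesis $d^2_{k-1}(K) > k\sigma^2$, this yields a coordinate-aligned hyperrectangle of dimension at least $\lceil k/2\rceil$ with every side of length at least $\sigma$ sitting inside $K$, obtained by the same greedy bundling argument as in the hyperrectangle lower bound: coordinates $i$ with marginal extent $b_i := \sup\{|x_i| : x\in K\} \geq \sigma$ contribute a side of length $\sigma$ one at a time, while coordinates with $b_i < \sigma$ are grouped until their squared extents reach $\sigma^2$, with orthosymmetry placing the axis segments in $K$ and quadratic convexity ensuring that the resulting product hyperrectangle genuinely sits inside $K$. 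This hyperrectangle contains a Euclidean cube of side $\sigma$, which the hypercube packing lemma from the hyperrectangle example packs with $\exp(\bar c\lceil k/2\rceil)$ points at scale $\sqrt{\lceil k/2\rceil}\sigma/c$. Taking $\varepsilon' \asymp \sqrt{k}\sigma$ and rescaling slightly then gives $\varepsilon^{'2}/\sigma^2 < \log M^{\operatorname{loc}}(\varepsilon')$, hence $\varepsilon^{*2} \gtrsim k\sigma^2$.

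Finally, the boundary case $d_0(K)^2 \leq \sigma^2$ is handled exactly as in the closing paragraph of the proof of Theorem~\ref{minimax:rate:thm}: the diameter is so small that both matching bounds degenerate to $d_0(K)^2$. The only genuinely new ingredient beyond the hyperrectangle and ellipse proofs is the reduction of abstract Kolmogorov widths to coordinate-wise widths under orthosymmetry and quadratic convexity, and this is exactly the step I expect to be the most delicate.
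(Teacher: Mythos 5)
Your upper bound and the treatment of the boundary case $d_0(K)^2 \leq \sigma^2$ match the paper's proof, which indeed just reruns the ellipse argument with the abstract width $d_k(K)$. The gap is in the lower bound, and it sits exactly where you predicted it would. Your plan is to work with the marginal extents $b_i = \sup\{|x_i| : x \in K\}$ and to greedily bundle coordinates into a product hyperrectangle, asserting that ``orthosymmetry placing the axis segments in $K$ and quadratic convexity ensuring that the resulting product hyperrectangle genuinely sits inside $K$.'' This is false: the product of the marginal extents is the smallest box \emph{containing} $K$, not one contained in it. The ellipse is already a counterexample --- each axis segment $[-\sqrt{a_i}e_i, \sqrt{a_i}e_i]$ lies in $K$, but $\prod_i[-\sqrt{a_i},\sqrt{a_i}]$ certainly does not. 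Quadratic convexity only lets you form convex combinations in the squared domain, so from two axis points you reach points $z$ with $z_i^2 = \lambda b_i^2$, $z_j^2 = (1-\lambda)b_j^2$, i.e.\ an ellipse-like arc, never the corner $(b_i, b_j)$. What orthosymmetry plus convexity actually gives you is that $\prod_i[-|x_i|,|x_i|] \subset K$ for a \emph{single} point $x \in K$; so the whole difficulty is to produce one point $t^* \in K^2$ with $\|t^*\|_\infty \leq \sigma^2$ and $\mathbbm{1}\T t^* = k\sigma^2$, and the marginals do not hand you such a point.

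The paper's proof supplies precisely this missing step. From the hypothesis $d_{k-1}^2(K) > k\sigma^2$ and the (trivial, one-sided) comparison with coordinate projections, for every subset $S$ of size $k-1$ there is a witness $t_S \in K^2$ supported off $S$ with $\mathbbm{1}\T t_S = k\sigma^2$. These are different points for different $S$, and the paper combines them by a min-max/max-min duality (von Neumann's minimax theorem applied to the bilinear form $e\T t_\alpha$ over the two simplices), together with an order-statistics bound $e_{(n-k+1)} \leq 1/k$, to exhibit a convex combination $t^* = \sum_i \alpha_i t_i$ with $\|t^*\|_\infty \leq \sigma^2$; quadratic convexity is used exactly here to guarantee $t^* \in K^2$, after which orthosymmetry gives the inscribed hyperrectangle $\prod_i[-\sqrt{t_i^*},\sqrt{t_i^*}] \subset K$ and the argument concludes as in the hyperrectangle example. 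Your greedy bundling on marginal extents is not a substitute for this averaging argument, so as written the lower bound does not go through.
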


\subsubsection{Upper Bound} \label{upper:bound:example:section}

The upper bound is the same as in the ellipse case, and in fact this upper bound is always valid. This reflects the fact that one can always use the optimal projection $P Y$ to estimate $\mu$.

\subsubsection{Lower Bound}

For the lower bound we may assume
\begin{align*}
    \min_{P \in \mathcal{P}_{k-1}} \max_{\theta \in K} \|\theta - P\theta\|^2 \geq k \sigma^2.
\end{align*}
We can only consider projections aligned with the coordinates -- there are $n_k := {n \choose k-1}$ such projections. Then the optimization is
\begin{align*}
    \min_{P \in \mathcal{P}_{k-1}} \max_{\theta \in K} \|\theta - P\theta\|^2 \leq \min_{S} \max_{\theta \in K} \sum_i \theta^2_i - \sum_{i \in S}\theta_i^2,
\end{align*}
where the minimum over $S$ is taken with respect to all subsets of $[n]$ with exactly $k-1$ elements. Since the set is quadratically convex the above can be written as
\begin{align*}
    \min_{P \in \mathcal{P}_{k-1}} \max_\theta \|\theta - P\theta\|^2 \leq \min_S \max_{t \in K^2} \sum_i t_i - \sum_{i \in S}t_i = \min_{w \in S_k} \max_{t \in K^2} \mathbbm{1}\T t - w\T t,
\end{align*}
where $w$ ranges in the set $S_k := \{e: e \in \RR^n$ has exactly $k-1$-entries equal to $1$ and the rest are $0\}$ and $\mathbbm{1} \in \RR^n$ denotes the vector comprised of $1$'s. It follows that for each $w_i \in S_k$ there exists a $t_i$ such that $\mathbbm{1}\T t_i - w_i\T t_i \geq k \sigma^2$. Since the set $K$ is convex and orthosymmetric we may assume without loss of generality that $t_i$ has $0$ entries on the support of $w_i$ and $\mathbbm{1}\T t_i = \mathbbm{1}\T t_i - w_i\T t_i = k \sigma^2$ (the latter holds since the set contains $0$). We will now argue that there exists a convex combination $t_{\alpha} := \sum_{i \in [n_k]} \alpha_i t_i$ such that $\|t_{\alpha}\|_{\infty} \leq \sigma^2$, where, as usual, $\|t_{\alpha}\|_{\infty}$ denotes the maximum of the absolute values of the entries of the vector $t_{\alpha}$. To see this, first observe that since all $t_i$ have positive entries $\|t_{\alpha}\|_{\infty} = \max_{e: e \geq 0, \mathbbm{1}\T e = 1} e\T t_{\alpha}$. Hence it suffices to show that 
\begin{align*}
    \min_{\alpha: \alpha \geq 0, \mathbbm{1}\T \alpha = 1} \max_{e: e \geq 0, \mathbbm{1}\T e = 1} e\T t_{\alpha} \leq \sigma^2.
\end{align*}
Since both sets over which the optimization is performed are convex, and the function $ e\T t_{\alpha} = e\T \sum_{i \in [n_k]} \alpha_i t_i$ is convex-concave (indeed it is linear in both arguments) by the minimax theorem we have
\begin{align*}
    \min_{\alpha: \alpha \geq 0, \mathbbm{1}\T \alpha = 1} \max_{e: e \geq 0, \mathbbm{1}\T e = 1} e\T t_{\alpha} = \max_{e: e \geq 0, \mathbbm{1}\T e = 1} \min_{\alpha: \alpha \geq 0, \mathbbm{1}\T \alpha = 1} e\T t_{\alpha} = \max_{e: e \geq 0, \mathbbm{1}\T e = 1} \min_{i \in [n_k]} e\T t_{i}.
\end{align*}
Observe that $\min_{i \in [n_k]} e\T t_{i} \leq e\T t_{e}$, where $t_e$ is selected such that it has $0$ entries corresponding to the top $k-1$ entries of $e$. Thus $e\T t_e = \sum_{i = 1}^{n - k + 1} e_{(i)} t_{e,(i)} \leq e_{(n - k + 1)} \sum t_{e, (i)} = e_{(n - k + 1)} k \sigma^2$, where $e_{(i)}$ denote the order statistics for the entries of the vector $e$, i.e. $e_{(1)} \leq e_{(2)} \leq \ldots \leq e_{(n - k + 1)}$ and $t_{e,(i)}$ are the concomitant values from the entries of $t_e$. Finally observe that $e_{(n - k + 1)} \leq \frac{1 - \sum_{i = 1}^{n-k} e_{(i)}}{k} \leq \frac{1}{k}$. Hence we conclude that there exists $t^* = t_{\alpha^*}$ such that
\begin{align*}
    \|t^*\|_{\infty} \leq \sigma^2.
\end{align*}
In addition since $t^*$ is a convex combination of vectors $t_i$ we must have $\mathbbm{1}\T t^* = k\sigma^2$, and $t^* \in K^2$.


Since the set is orthosymmetric we have the hyperrectangle $\prod_{i \in [n]} [-\sqrt{t_i^*}, \sqrt{t_i^*}] \subset K$. Hence the logic is the same as in the hyperrectangular case --- we know that all entries of $t^*$ are smaller than $\sigma^2$ and they sum up to $k\sigma^2$. Hence we can create a large ($\lceil k/2 \rceil$-dimensional) hyperrectangle of side lengths at least $\sigma$, and the proof can continue as in the hyperrectangle case. The final case to consider is when $d_0(K)^2 \leq \sigma^2$, but that can be handled as in Theorem \ref{minimax:rate:thm}.

\subsection{$\ell_1$ ball}\label{yang:barron:section:exmaple}

In this section we will replicate a result of \cite{donoho1994minimax}. Suppose the set $K = \{\theta: \|\theta\|_1 \leq 1\}$. We will use the fact that
\begin{align}\label{yang:barron:bound}
    \log M(\varepsilon/c) \geq \log M^{\operatorname{loc}}(\varepsilon) \geq \log M(\varepsilon/c) - \log M(\varepsilon),
\end{align} 
where we denoted with $\log M(\varepsilon)$ the log cardinality of the maximal packing set of $K$ at a distance $\varepsilon$. The bounds \eqref{yang:barron:bound} follow from \cite{yang1999information}; actually \cite{yang1999information} only prove the bounds for the special case $c = 2$, but their results apply more generally.

Using the fact that the log cardinality of a maximal $\varepsilon$-packing set of the $\ell_1$ ball is given by $\log(\varepsilon^2 n)/\varepsilon^2$ for $\varepsilon \gtrsim 1/\sqrt{n}$, (otherwise it is $n$ if $\varepsilon \asymp 1/\sqrt{n}$ and $n\log\frac{1}{\varepsilon^2n}$ when $\varepsilon\lesssim 1/\sqrt{n}$ \cite{guedon2000euclidean, schutt1984entropy}), for $c$ large enough we have that 
\begin{align*}
    \log M(\varepsilon/c) - \log M(\varepsilon) \asymp \frac{\log (\varepsilon^2 n)}{\varepsilon^2} \asymp  \log M(\varepsilon/c).
\end{align*}

Hence, for $\varepsilon \gtrsim 1/\sqrt{n}$, the equation $\varepsilon^2/\sigma^2 \asymp \frac{\log (\varepsilon^2 n)}{\varepsilon^2}$ determines the minimax rate. Suppose that $\sigma$ is such that $\log((\sigma^2\log n)^{1/2}n) \asymp  \log n$, and $ (\sigma^2\log n)^{1/4} \gtrsim 1/\sqrt{n}$. Then setting $\varepsilon \asymp (\sigma^2\log n)^{1/4}$ solves the equation up to constant factors. This matches the example after Theorem 3 of \cite{donoho1994minimax} for $\sigma = 1/\sqrt{n}$. We conclude that
\begin{corollary}
The minimax rate for the $\ell_1$ ball is $(\sigma^2 \log n)^{1/2} \wedge 4$ for values of $\sigma$ such that $\log((\sigma^2\log n)^{1/2}n) \asymp  \log n$ and $ (\sigma^2\log n)^{1/4} \gtrsim 1/\sqrt{n}$.
\end{corollary}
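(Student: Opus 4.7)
My plan is to apply Theorem \ref{minimax:rate:thm} and reduce the problem to computing $\varepsilon^*$. For the unit $\ell_1$ ball one has $d = \diam(K) = 2$ (attained at $\pm e_i$), so $d^2 = 4$ and the claimed rate will be $\varepsilon^{*2} \wedge 4$. It therefore suffices to estimate $\log M^{\operatorname{loc}}(\varepsilon)$ accurately enough to read off $\varepsilon^*$ from its defining equation.

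To estimate the local entropy I would use the Yang--Barron sandwich \eqref{yang:barron:bound} together with the sharp global packing estimates for the $\ell_1$ ball from \cite{guedon2000euclidean, schutt1984entropy}: $\log M(\varepsilon) \asymp \varepsilon^{-2} \log(\varepsilon^2 n)$ whenever $\varepsilon \gtrsim 1/\sqrt{n}$. Substituting $\varepsilon/c$ gives $\log M(\varepsilon/c) \asymp c^2 \varepsilon^{-2} \log(\varepsilon^2 n/c^2)$, which for $c$ a sufficiently large absolute constant exceeds $\log M(\varepsilon)$ by a multiplicative constant factor. Hence the difference $\log M(\varepsilon/c) - \log M(\varepsilon) \asymp \log M(\varepsilon/c) \asymp \varepsilon^{-2} \log(\varepsilon^2 n)$, and by the sandwich $\log M^{\operatorname{loc}}(\varepsilon) \asymp \varepsilon^{-2} \log(\varepsilon^2 n)$ in this regime.

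Next I would solve the transcendental equation $\varepsilon^2/\sigma^2 \asymp \varepsilon^{-2} \log(\varepsilon^2 n)$, or equivalently $\varepsilon^4 \asymp \sigma^2 \log(\varepsilon^2 n)$. The natural ansatz is $\varepsilon \asymp (\sigma^2 \log n)^{1/4}$; then $\varepsilon^2 \asymp (\sigma^2 \log n)^{1/2}$, so by the first hypothesis $\log(\varepsilon^2 n) = \log\bigl((\sigma^2 \log n)^{1/2} n\bigr) \asymp \log n$, and plugging back in gives $\varepsilon^4 \asymp \sigma^2 \log n \asymp \sigma^2 \log(\varepsilon^2 n)$, confirming the ansatz up to constants. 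The second hypothesis $(\sigma^2 \log n)^{1/4} \gtrsim 1/\sqrt{n}$ is precisely what is needed for this $\varepsilon$ to lie in the regime where the global packing estimates apply. The preceding sandwich lemma (two values of $\varepsilon$ of the same order, one above and one below, flanking $\varepsilon^*$) then yields $\varepsilon^{*2} \asymp (\sigma^2 \log n)^{1/2}$, and Theorem \ref{minimax:rate:thm} gives the minimax rate $(\sigma^2 \log n)^{1/2} \wedge 4$.

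The main obstacle is the comparability step $\log M(\varepsilon/c) - \log M(\varepsilon) \asymp \log M(\varepsilon/c)$: the Yang--Barron sandwich alone only upper bounds the local entropy, and squeezing out a matching lower bound requires $\varepsilon \mapsto \log M(\varepsilon)$ to accelerate fast enough as $\varepsilon$ shrinks. For the $\ell_1$ ball the factor $c^2$ coming from the $\varepsilon^{-2}$ scaling makes this work, but one must check that the logarithmic factor $\log(\varepsilon^2 n/c^2)$ is not eaten up by $\log c$ at the chosen $\varepsilon$, which is again ensured by the two standing hypotheses. Everything else is routine substitution.
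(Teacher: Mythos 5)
Your proposal is correct and follows essentially the same route as the paper: the Yang--Barron sandwich \eqref{yang:barron:bound} combined with the global packing estimate $\log M(\varepsilon)\asymp \varepsilon^{-2}\log(\varepsilon^2 n)$ for $\varepsilon\gtrsim 1/\sqrt n$, the observation that for $c$ large the difference $\log M(\varepsilon/c)-\log M(\varepsilon)$ is comparable to $\log M(\varepsilon/c)$, and then solving $\varepsilon^2/\sigma^2\asymp \varepsilon^{-2}\log(\varepsilon^2 n)$ under the two standing hypotheses. The only additions are details the paper leaves implicit (the diameter computation $d^2=4$ and the explicit appeal to the flanking lemma after Theorem \ref{minimax:rate:thm}), which are fine.
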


It is worth pointing out that the orthogonal projection estimator, which works at a minimax rate in all of the aforementioned examples, fails to attain the rate for the $\ell_1$ ball \citep[see][e.g.]{zhang2013nearly}. On the other hand as we argue below the LSE works optimally for the $\ell_1$ ball. For an example of when both LSE and the projection estimator fail refer to Example 8 of \cite{zhang2013nearly}.

\subsection{Convex weak $\ell_p$ balls for $1 < p < 2$}

In this section we consider an example inspired by weak $\ell_p$ balls. Consider the quasi-norm $\|x\|_{p, \infty} = \max_{i \in [n]} i^{1/p} x_i^*$ on $\RR^n$ where $x_i^*$ denotes a decreasing rearrangement of $|x_1|, \ldots, |x_n|$, where $1 < p < 2$. Unfortunately $\|x\|_{p, \infty}$ is not a norm (so that its unit ball is not convex), but it admits an equivalent norm as follows. Consider 
\begin{align*}
    \|x\|_{p, \infty, *} = \max_{i \in [n]} i^{1/p} x_i^{**},
\end{align*}
where $x_i^{**} = i^{-1}\sum_{j = 1}^i x_j^*$. In this section we derive the minimax rate of the Gaussian sequence model for the convex set $K = \{x \in \RR^n : \|x\|_{p, \infty, *} \leq 1\}$. We will refer to $K$ as the convex weak $\ell_p$ ball. Using Theorem 2 of \cite{edmunds1998entropy} it is not too hard to see that the log cardinality of a maximal $\varepsilon$-packing set of $K$ (in Euclidean norm) is given by $\asymp \varepsilon^{-\frac{2p}{2-p}}\log(n \varepsilon^{\frac{2p}{2 - p}})$ for values of $\varepsilon \gtrsim n^{1/2 - 1/p}$. Observe that these bounds actually match the known bounds for $\ell_p$ balls \citep[see][e.g.]{schutt1984entropy}. Hence we can apply the same logic as in our $\ell_1$ example above, in that we can claim that for large enough $c$
\begin{align*}
    \log M(\varepsilon/c) - \log M(\varepsilon) \asymp \varepsilon^{-\frac{2p}{2-p}}\log(n \varepsilon^{\frac{2p}{2 - p}}) \asymp  \log M(\varepsilon/c),
\end{align*}
for $\varepsilon \gtrsim n^{1/2 - 1/p}$. Solving the equation $\frac{\varepsilon^2}{\sigma^2} \asymp \varepsilon^{-\frac{2p}{2-p}}\log(n \varepsilon^{\frac{2p}{2 - p}})$ gives, $\varepsilon \asymp \sigma^{\frac{4 - 2p}{4}}(\log n)^{\frac{2-p}{4}}$ given that $\sigma$ satisfies $\log(n \sigma^p (\log n)^{p/2}) \asymp \log n$. We conclude that 
\begin{corollary}
The minimax rate for the set $K$ as above is $\sigma^{2-p}(\log n)^{\frac{2-p}{2}} \wedge \diam(K)^2$ for values of $\sigma$ such that $\log(n \sigma^p (\log n)^{p/2}) \asymp \log n$ and $ \sigma^{\frac{4 - 2p}{4}}(\log n)^{\frac{2-p}{4}} \gtrsim n^{1/2 - 1/p}$.
\end{corollary}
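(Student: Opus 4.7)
The plan is to mirror the $\ell_1$ ball derivation in Section \ref{yang:barron:section:exmaple}, by (i) locating a sharp global packing entropy estimate for $K$ from the Banach space literature, (ii) using the Yang--Barron sandwich bound \eqref{yang:barron:bound} to promote it to a local entropy estimate, and then (iii) solving the fixed-point equation dictated by Theorem \ref{minimax:rate:thm}. The starting point is the observation that, because $\|\cdot\|_{p,\infty,*}$ and $\|\cdot\|_{p,\infty}$ are equivalent norms (up to constants depending only on $p$), $K$ is comparable to the (convexified) unit ball of weak $\ell_p$, and Theorem 2 of \cite{edmunds1998entropy} yields $\log M(\varepsilon) \asymp \varepsilon^{-\frac{2p}{2-p}}\log(n\varepsilon^{\frac{2p}{2-p}})$ for $\varepsilon \gtrsim n^{1/2-1/p}$, matching the familiar $\ell_p$-ball rate \citep{schutt1984entropy}.

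Next I would exploit the algebraic form of this estimate to get a local entropy bound. Under the replacement $\varepsilon \mapsto \varepsilon/c$ the polynomial factor is multiplied by $c^{\frac{2p}{2-p}}$, while the logarithm only changes by an additive $O(\log c)$ term. Therefore, for $c$ large enough,
\begin{align*}
\log M(\varepsilon/c) - \log M(\varepsilon) \asymp \log M(\varepsilon/c) \asymp \varepsilon^{-\frac{2p}{2-p}} \log\bigl(n\varepsilon^{\frac{2p}{2-p}}\bigr),
\end{align*}
so that the sandwich bound \eqref{yang:barron:bound} delivers $\log M^{\operatorname{loc}}(\varepsilon) \asymp \varepsilon^{-\frac{2p}{2-p}}\log(n\varepsilon^{\frac{2p}{2-p}})$ throughout the range $\varepsilon \gtrsim n^{1/2-1/p}$.

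Finally I would solve the characterizing relation $\varepsilon^{*2}/\sigma^2 \asymp \log M^{\operatorname{loc}}(\varepsilon^*)$ supplied by Theorem \ref{minimax:rate:thm}. Substituting the local entropy estimate and using the hypothesis $\log(n\sigma^p(\log n)^{p/2}) \asymp \log n$ (which is exactly what is needed to replace $\log(n\varepsilon^{\frac{2p}{2-p}})$ by $\log n$ at the candidate solution) yields $\varepsilon^{*\frac{4}{2-p}} \asymp \sigma^2 \log n$, i.e.\ $\varepsilon^* \asymp \sigma^{(2-p)/2}(\log n)^{(2-p)/4}$. Squaring gives the claimed rate $\sigma^{2-p}(\log n)^{(2-p)/2}$, and the condition $\sigma^{(4-2p)/4}(\log n)^{(2-p)/4} \gtrsim n^{1/2-1/p}$ is exactly the check that $\varepsilon^*$ falls inside the regime where our entropy estimate holds. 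Taking the minimum with $\diam(K)^2$ absorbs the very-small-$\sigma$ case in the same manner as at the end of Theorem \ref{minimax:rate:thm}.

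The main obstacle is the first step: the classical references \cite{edmunds1998entropy, schutt1984entropy} state entropy numbers for infinite-dimensional Lorentz-type embeddings rather than finite-dimensional packing numbers for the specific convex set $K$. I would need to verify that truncation to $\mathbb{R}^n$, passage from entropy numbers to packing numbers, and replacement of $\|\cdot\|_{p,\infty}$ by the equivalent norm $\|\cdot\|_{p,\infty,*}$ all preserve the stated two-sided bound with absolute constants; once this is in hand the remaining computations are essentially those of Section \ref{yang:barron:section:exmaple}.
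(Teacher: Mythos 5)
Your proposal is correct and follows essentially the same route as the paper: the global packing entropy estimate from Theorem 2 of \cite{edmunds1998entropy}, the Yang--Barron sandwich \eqref{yang:barron:bound} with the observation that rescaling by $c$ inflates the polynomial factor while only perturbing the logarithm additively, and the fixed-point equation $\varepsilon^{4/(2-p)} \asymp \sigma^2 \log n$ under the stated conditions on $\sigma$. The obstacle you flag at the end (translating the infinite-dimensional entropy-number statements into finite-dimensional packing bounds for $K$ itself) is precisely the step the paper also leaves as ``not too hard to see,'' so you are not missing anything the paper supplies.
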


\begin{remark}
Finally, let us remark that the same rate is valid for $\ell_p$ balls for $1 < p < 2$. This was first established in \cite{donoho1994minimax} (see their Theorem 3 for $\sigma = n^{-1/2}$). However, we would like to point out that the convex weak $\ell_p$ ball above is a larger set than the $\ell_p$ ball. This can be seen by the elementary inequality $\frac{\sum_{k = 1}^l |a_k|}{l} \leq \bigg(\frac{\sum_{k = 1}^l |a_k|^p}{l}\bigg)^{1/p}$ for any real numbers $\{a_k\}_{k = 1}^l$ and $p > 1$.
\end{remark}

\subsection{Bounds for a Bounded Convex Polytope with $N$ Vertices}

In this subsection we derive an upper bound on the minimax rate in the case when the set $K\subset \RR^p$ is a bounded convex polytope with $N$ vertices. Without loss of generality suppose $K$ is a polytope of diameter smaller than $1$, and it has exactly $N$ vertices. 

\subsubsection{Upper Bound}

By Maurey's empirical method, one can establish that $\log M(\varepsilon) \leq (C + 4C\varepsilon^2 N)^{\lceil 4/\varepsilon^2 \rceil}$ for some absolute constant $C$ (see Corollary 0.0.4 and Exercise 0.0.6 of \cite{vershynin2018high} and use the fact that the cardinality of a packing set of radius $2\varepsilon$ is smaller than the cardinality of a covering set of radius $\varepsilon$, see \eqref{packing:covering:bound} below). By \eqref{yang:barron:bound} we have $\log M^{\operatorname{loc}}(\varepsilon) \leq \log M(\varepsilon/c) \leq \lceil 4c^2/\varepsilon^2 \rceil \log (C + 4C\varepsilon^2 N/c^2)$. Thus an upper bound on the minimax rate is given by $\tilde \varepsilon^2 \wedge \diam(K)$ where $\tilde \varepsilon := \sup \bigg\{ \varepsilon : \frac{\varepsilon^2}{\sigma^2} \leq \lceil 4c^2/\varepsilon^2 \rceil \log (C + 4C\varepsilon^2 N/c^2)\bigg \}$. As illustrated in Section \ref{yang:barron:section:exmaple} this rate is in fact achieved for the ($\frac{1}{2}$-scaled) $\ell_1$ ball at least for a regime of $\sigma$ values. It is worth pointing out that since the upper bound based on Maurey's argument is nearly the same as that given by Sudakov minoration (see Corollary 7.4.4 in \cite{vershynin2018high}), it follows that the LSE will achieve (nearly) the same upper bound on the rate.

\subsubsection{Lower Bound}

In addition, we can show a matching lower bound for some convex polytopes as follows. Suppose there are $R \gtrsim N$ points $v_i \in K$ for $i \in [R]$ satisfying the following condition 
\begin{align}\label{raskutti:requirement}
    \|\sum_{i \in [R]} v_i \theta_i\| \geq \kappa_c \|\theta\| - f(n,p,R),
\end{align}
for any $\theta$ in the $2 \times \ell_1$ ball of $\RR^R$ for some small non-negative function $f(n,p,R)$ and for some positive constant $\kappa_c > 0$. By a sparse Varshamov-Gilbert lemma (see Lemma 10.12 \cite{foucart2013invitation}) one can find $L \geq \exp(c_1 k \log R/4 k)$ vectors $\{w_i\}_{i \in [L]}$ in the set $\{w \in \{0,1\}^R : \rho_H(w) = k\}$ where $\rho_H$ is the Hamming distance, such that $\rho_H(w_i, w_l) \geq c_2 k$. Now set $x_i = \sum_{j \in [R]} v_j w_{ij}/k$, and observe that $\|x_i - x_l\| = \|\sum_{j \in [R]} v_j (w_{ij} - w_{lj})/k\| \geq \|w_i -w_l\|/k - f(n,p,R) \geq \frac{\sqrt{c_2}}{\sqrt{k}} - f(n,p,R)$. It follows that for $k = \frac{c_2}{(\varepsilon + f(n,p,R))^2}$ the set $\{x_i\}_{i \in [L]}$ is an $\varepsilon$ packing set. Thus $\log M(\varepsilon) \geq c_1 \frac{c_2}{(\varepsilon + f(n,p,R))^2} \log \frac{R(\varepsilon + f(n,p,R))^2}{4 c_2}$. To simplify the calculations suppose $\varepsilon \gtrsim f(n,p,R)$, to obtain that $\log M(\varepsilon) \geq  \frac{c_1'}{\varepsilon^2} \log \frac{R \varepsilon^2}{c'_2}$. Now one can use \eqref{yang:barron:bound} coupled with the upper bound on $\log M(\varepsilon)$ via Maurey's argument above, to claim that for a sufficiently large $c$ the $\log M^{\operatorname{loc}}(\varepsilon) \gtrsim \frac{c_1'}{\varepsilon^2} \log \frac{R \varepsilon^2}{c'_2}$ for $\varepsilon \gtrsim f(n,p,R)$. It follows that if the solution $\varepsilon^*$ to the equation $\frac{\varepsilon^2}{\sigma^2} \asymp \frac{\log \frac{R \varepsilon^2}{c'_2}}{\varepsilon^2}$ is $\gtrsim f(n,p,N)$ $\varepsilon^{*2}\wedge \diam(K)$ is a lower bound on the rate. Further since $R \gtrsim N$ then the lower and upper bounds would match provided that $\varepsilon^* \gtrsim f(n,p,R)$.

One instance when such a scenario can appear in practice is when $K = X \beta$ for $\beta \in \ell^p_1(1)$, where we denoted the unit $\ell_1$ ball in $\RR^p$ with $\ell^p_1(1)$. Assuming that $\max_{i \in [p]}\|X_i\| \leq 1$, it follows that $K$ is a symmetric polytope with at most $N \leq 2p$ vertices. In this case one can see that the calculations above recover the bounds given in Theorems 3 and 4 in \cite{raskutti2011minimax} for the $\ell_1$ ball in the case when $\sigma \asymp \frac{1}{\sqrt{n}}$. Here the quantity $f(n,p,N)$ can be taken as $f(n,p,N) \lesssim \sqrt{\frac{\log p}{n}}$. One example of a matrix $X$ that satisfies condition \eqref{raskutti:requirement} with high probability is if the rows of $X$ consist of i.i.d. $ N(0,\mathbb{I}_p)/\sqrt{C'n}$ for a sufficiently large $C'$ variables. Then with high probability it can be shown the columns of $X$ are bounded in $\ell_2$ norm (see Appendix I \cite{raskutti2011minimax}), and also by Proposition 1 of \cite{raskutti2011minimax}  \eqref{raskutti:requirement} is satisfied by $R = p \gtrsim N$ points.

\subsection{Cartesian Product of Sets}

In this section we consider the example when $K = K_1 \times K_2$ is a Cartesian product of two closed bounded convex sets. Intuitively it should be clear that if one has a minimiax rate optimal estimator on $K_1$ and a minimax rate optimal estimator on $K_2$ by running them separately one will obtain at most twice the maximum of the two rates. On the other hand, for the lower bound it is clear that either of the two minimax rates are lower bounds on the minimax rate over $K$. Below we make this intuition precise by using local packing entropy calculations.

\subsubsection{Upper Bound}

We begin by reminding the reader that 
\begin{align}\label{packing:covering:bound}
    M(2\delta, S) \leq N(\delta, S) \leq M(\delta, S),
\end{align} 
where $M$ and $N$ denote the maximal packing and minimum covering numbers of the (totally) bounded set $S \subset \RR^n$ in Euclidean norm, and the $\delta$ (or $2\delta$) indicates at what distance we are packing or covering (see \cite[Lemma 5.5][e.g.]{wainwright2019high}).

Consider now a fixed point $(x^\circ, y^\circ) \in K$ such that $x^\circ \in K_1$ and $y^\circ \in K_2$ are arbitrary points. Let $N_1$ be a minimal covering of the set $B(x^\circ, \varepsilon) \cap K_1$ and $N_2$ be a minimal covering of the set $B(y^\circ, \varepsilon) \cap K_2$ at a distance $\varepsilon/4c$. Put $\tilde N = N_1 \times N_2$. Consider $N' = \Pi_{B((x^\circ, y^\circ), \varepsilon) \cap K} \tilde N$ which is the projection of $\tilde N$ onto the closed convex set $B((x^\circ, y^\circ), \varepsilon) \cap K$. We will show that $N'$ is a covering of $B((x^\circ, y^\circ), \varepsilon) \cap K$. First let us verify that for a point $(x,y) \in N'$ we have $\|(x,y) - (x^\circ, y^\circ)\|\leq \varepsilon$. This is so simply by the fact that we projected $\tilde N$ on the set $B((x^\circ, y^\circ), \varepsilon) \cap K$. Now for an arbitrary point $(\bar x,\bar y) \in B((x^\circ, y^\circ), \varepsilon) \cap K$ let us find $(x', y') \in N'$ such that $\|(\bar x,\bar y) - (x', y')\|$ is small. Let $\tilde x$ be the point closest to $\bar x$ from $N_1$ and similarly let $\tilde y$ be the point closest to $\bar y$ from $N_2$. Define $ (x', y') =  \Pi_{B((x^\circ, y^\circ), \varepsilon) \cap K}(\tilde x, \tilde y) \in  N'$. We have
\begin{align*}
    \|(\bar x,\bar y) - (x', y')\| \leq \|(\bar x,\bar y) - (\tilde x, \tilde y)\| \leq \|\bar x - \tilde x\| + \|\bar y - \tilde y\| \leq \frac{\varepsilon}{2c},
\end{align*}
where in the above the first inequality follows by the fact that $(\bar x,\bar y) \in B((x^\circ, y^\circ), \varepsilon) \cap K$ and the projection does not increase the distance between the point and any point in the set $B((x^\circ, y^\circ), \varepsilon) \cap K$, and the last inequality is true because $\|\bar x - x^\circ\| \leq \varepsilon$ and similarly $\|\bar y -  y^\circ\| \leq \varepsilon$ and the definitions of $N_1$ and $N_2$. Now using \eqref{packing:covering:bound}, we conclude that $\log M^{\operatorname{loc}}_{K}(\varepsilon) \leq 2 (\log M^{\operatorname{loc}}_{K_1}(\varepsilon, \varepsilon/4c)\vee  \log M^{\operatorname{loc}}_{K_2}(\varepsilon, \varepsilon/4c))$,
where we denoted with $M^{\operatorname{loc}}_{K_1}(\varepsilon, \varepsilon/4c)$ the local packing entropy of $K_1$ of radius $\varepsilon$ at a distance $\varepsilon/4c$ (instead of $\varepsilon/c$) and similarly for the term $M^{\operatorname{loc}}_{K_2}(\varepsilon, \varepsilon/4c)$.

\subsubsection{Lower Bound}

In this section we establish an lower bound on the rate. Let $(x^\circ, y^\circ) \in K$ be a point where $x^\circ \in K_1$ and $y^\circ \in K_2$ are arbitrary points. Consider two maximal packing sets $M_1$ and $M_2$ of $B(x^\circ, \varepsilon/2) \cap K_1$ and $B(y^\circ, \varepsilon/2) \cap K_2$ at a distance $\sqrt{2}\varepsilon/c$. Let $M$ be a maximal packing set of $B((x^\circ, y^\circ), \varepsilon) \cap K$ at a distance $\varepsilon/c$. We claim that 
\begin{align}\label{log:cardiniality:ineq}
    \log |M| \geq \log |M_1| + \log |M_2|.
\end{align} This is so since the set $M' = M_1 \times M_2$ forms a packing set of $B((x^\circ, y^\circ), \varepsilon) \cap K$. To see this we first verify that for all $(x,y) \in M'$ we have $\|(x,y) - (x^\circ,y^\circ)\| \leq \varepsilon$. This is true since $\|(x,y) - (x^\circ,y^\circ)\| \leq \|x- x^\circ\| + \|y- y^\circ\| $, and the requirements for the points in $M_1$ and $M_2$. Next for any two distinct points in $(x,y), (x',y') \in M'$ (i.e., $x\neq x'$ and/or $y \neq y'$) we have $\|(x,y) - (x',y')\|\geq \frac{\|x - x'\| + \|y - y'\|}{\sqrt{2}} \geq \varepsilon/c$. This finishes the proof. Next, \eqref{log:cardiniality:ineq} implies that 
\begin{align*}
    \log M^{\operatorname{loc}}_{K}(\varepsilon) & \geq \log M^{\operatorname{loc}}_{K_1}(\varepsilon/2, \sqrt{2}\varepsilon/c) \vee \log M^{\operatorname{loc}}_{K_2}(\varepsilon/2, \sqrt{2}\varepsilon/c)\\
    & \geq \log M^{\operatorname{loc}}_{K_1}(\varepsilon, 2\sqrt{2}\varepsilon/c) \vee \log M^{\operatorname{loc}}_{K_2}(\varepsilon, 2\sqrt{2}\varepsilon/c),
\end{align*} where as in the upper bound we denoted with $M^{\operatorname{loc}}_{K_1}(\varepsilon/2, \sqrt{2}\varepsilon/c)$ the local packing entropy of $K_1$ of radius $\varepsilon/2$ (instead of $\varepsilon$) at a distance $\sqrt{2}\varepsilon/c$ and similarly for the term $M^{\operatorname{loc}}_{K_2}(\varepsilon/2, \sqrt{2}\varepsilon/c)$, and in the last inequality we used Lemma \ref{simple:lemma:monotone}.

Combining the results from the previous two subsections, and the fact our results are robust to changes in $c$, i.e., to selecting $c$ to be slightly bigger or smaller sufficiently large constant we conclude that:
\begin{corollary}
The minimax rate up to constant factors is given by $\varepsilon^{*2} \wedge \diam(K)^2$ where \begin{align}\label{cartesian:product:of:sets:rate}
    \varepsilon^* = \sup\{\varepsilon : \varepsilon^2/\sigma^2\leq \log M^{\operatorname{loc}}_{K_1}(\varepsilon)\vee \log M^{\operatorname{loc}}_{K_2}(\varepsilon)\}.
\end{align}
\end{corollary}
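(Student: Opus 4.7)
The plan is to invoke Theorem~\ref{minimax:rate:thm} directly on the product set $K = K_1 \times K_2$, obtaining that the minimax rate is $\varepsilon^{*2}_K \wedge \diam(K)^2$ up to absolute constant factors, where $\varepsilon^*_K := \sup\{\varepsilon : \varepsilon^2/\sigma^2 \leq \log M^{\operatorname{loc}}_K(\varepsilon)\}$. The corollary will then follow once (i) $\varepsilon^{*2}_K \asymp \varepsilon^{*2}$ (with $\varepsilon^*$ as defined in the corollary) and (ii) $\diam(K)^2 = \diam(K_1)^2 + \diam(K_2)^2$ (which is immediate by Pythagoras since the two factors live in orthogonal coordinate subspaces).

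First I would chain the two sandwich bounds established in the preceding subsections into
\begin{align*}
\log M^{\operatorname{loc}}_{K_1}(\varepsilon, \tfrac{2\sqrt{2}\varepsilon}{c}) \vee \log M^{\operatorname{loc}}_{K_2}(\varepsilon, \tfrac{2\sqrt{2}\varepsilon}{c}) \leq \log M^{\operatorname{loc}}_K(\varepsilon) \leq 2\bigl[\log M^{\operatorname{loc}}_{K_1}(\varepsilon, \tfrac{\varepsilon}{4c}) \vee \log M^{\operatorname{loc}}_{K_2}(\varepsilon, \tfrac{\varepsilon}{4c})\bigr].
\end{align*}
The packing radii appearing on the outer sides, namely $2\sqrt{2}\varepsilon/c$ and $\varepsilon/(4c)$, are of the form $\varepsilon/c'$ where $c'$ is a constant-factor multiple of $c$. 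By the robustness of Theorem~\ref{minimax:rate:thm} with respect to the absolute constant $c$ in the definition of local entropy (explicitly noted by the author in the paragraph preceding the corollary), each such packing radius can be replaced by $\varepsilon/c$ at the expense of only a constant factor in the resulting $\varepsilon^*$; indeed, the entire proof of Theorem~\ref{minimax:rate:thm} never exploits a specific value of $c$ but only that it is sufficiently large.

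To absorb the remaining factor of $2$ in the upper bound, I would invoke Lemma~\ref{simple:lemma:monotone}. Setting $h(\varepsilon) := \log M^{\operatorname{loc}}_{K_1}(\varepsilon) \vee \log M^{\operatorname{loc}}_{K_2}(\varepsilon)$, which is non-increasing, observe that if $\varepsilon^2/\sigma^2 \leq 2h(\varepsilon)$, then $\tilde\varepsilon := \varepsilon/\sqrt{2}$ obeys $\tilde\varepsilon^2/\sigma^2 = \varepsilon^2/(2\sigma^2) \leq h(\varepsilon) \leq h(\tilde\varepsilon)$, so $\tilde\varepsilon$ satisfies the factor-free condition. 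Hence the supremum defined with the factor $2$ exceeds the corollary's $\varepsilon^*$ by at most $\sqrt{2}$, which is absorbed into the $\asymp$. Putting these pieces together yields $\varepsilon^{*2}_K \asymp \varepsilon^{*2}$, and combined with $\diam(K)^2 = \diam(K_1)^2 + \diam(K_2)^2$ we conclude $\varepsilon^{*2}_K \wedge \diam(K)^2 \asymp \varepsilon^{*2} \wedge \diam(K)^2$, proving the corollary.

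The main technical subtlety is the robustness-to-$c$ step: replacing $c$ by a constant-factor multiple changes the entire functional form of $\varepsilon \mapsto \log M^{\operatorname{loc}}(\varepsilon)$ rather than simply rescaling it. The justification is that the proof of Theorem~\ref{minimax:rate:thm} was carried out for a generic sufficiently large constant (as the constants $c$ and $\underline{C}$ in that proof can be traced through and adjusted), so that the resulting $\varepsilon^{*2}$ at different admissible values of $c$ are all comparable up to absolute constants.
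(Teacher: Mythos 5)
Your proposal is correct and follows essentially the same route as the paper: the paper's own justification is precisely to combine the two sandwich bounds from the preceding subsections with the remark that the characterization is robust to replacing $c$ by a comparable sufficiently large constant, and then apply Theorem \ref{minimax:rate:thm}. You merely spell out the $\sqrt{2}$-rescaling via Lemma \ref{simple:lemma:monotone} to absorb the factor of $2$, which the paper leaves implicit.
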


\begin{remark}
Let us remark that the corollary above can give rise to many examples where the minimax rate can be quantified with more interpretable quantities than the local entropies, for instance when $K_1$ and $K_2$ are an ellipse and a hyperrectangle. Of course this bound also extends to the case when $K = \prod_{j = 1}^k K_j$ as long as the number of sets $k$ remains fixed, i.e., it does not scale with $n$ (or $\sigma$). Finally we remark that the same logic shows that if one has a set $K$ which is a direct sum $K = K_1 \oplus K_2$, where $K_1 \perp K_2$ are orthogonal bounded and closed convex sets the minimax rate on the sum would be given by $\varepsilon^{*2} \wedge \diam(K)^2$ where $\varepsilon^*$ is determined via equation \eqref{cartesian:product:of:sets:rate}. This is so since for any two points $z = x + y, z' = x' + y' \in K$ where $x,x' \in K_1$ and $y,y' \in K_2$ we have
\begin{align*}
(\|x - x'\| + \|y - y'\|)^2 & \geq \|x + y - (x' + y')\|^2 \\
& = \|x - x'\|^2 + \|y - y'\|^2 \\
&\geq  (\|x - x'\| + \|y - y'\|)^2/2,
\end{align*}
so that the same proof as above will apply. 

\end{remark}

\section{Adaptivity and Admissibility up to a Universal Constant} \label{adaptivity:sec}

In this section we argue that the estimator constructed in Algorithm \ref{test} is adaptive to the true point. It will be beneficial to define local entropy in a slightly different manner than before.

\begin{definition}\label{def:local:entropy:at:a:point} Let $\theta \in K$ be a point. Consider the set $B(\theta, \varepsilon) \cap K$. For $\theta \in K$ let $M(\theta, \varepsilon, c) := M(\varepsilon/c, B(\theta, \varepsilon) \cap K)$ denote the largest cardinality of an $\varepsilon/c$ packing set in $B(\theta, \varepsilon) \cap K$. 
\end{definition}
\begin{remark} We would like to underscore the fact that Definition \ref{def:local:entropy:at:a:point} does not take a supremum over all points in the set $K$. This small but key difference is what enables us to formalize the adaptive result below.

\end{remark}
We first prove the following lemma.
\begin{lemma} Suppose $\nu$ and $\mu$ are two points in $K$ such that $\|\nu - \mu\| < \delta$. Then $M(\nu, \varepsilon, c) \leq M(\mu, 2\varepsilon, 2c)$ for any $\varepsilon > \delta$.
\end{lemma}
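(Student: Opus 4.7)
The plan is to reduce the claim to a simple ball-inclusion observation, followed by the trivial monotonicity of packing numbers under set inclusion, using the fact that the packing radii $\varepsilon/c$ and $2\varepsilon/(2c)$ coincide.

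First I would observe that the two packing radii are the same: on the left the radius is $\varepsilon/c$, and on the right it is $(2\varepsilon)/(2c)=\varepsilon/c$. So the only thing that changes between the two quantities is the \emph{ambient set} being packed: $B(\nu,\varepsilon)\cap K$ versus $B(\mu,2\varepsilon)\cap K$.

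Next I would prove the inclusion $B(\nu,\varepsilon)\cap K \subseteq B(\mu,2\varepsilon)\cap K$. For any $x\in B(\nu,\varepsilon)$, the triangle inequality gives
\begin{align*}
\|x-\mu\|\;\le\;\|x-\nu\|+\|\nu-\mu\|\;<\;\varepsilon+\delta\;<\;2\varepsilon,
\end{align*}
where the last strict inequality uses the hypothesis $\varepsilon>\delta$. Intersecting with $K$ preserves the inclusion.

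Finally, I would conclude by the standard monotonicity of packing numbers: if $P$ is any $\varepsilon/c$-separated subset of $B(\nu,\varepsilon)\cap K$, then by the inclusion above it is also an $\varepsilon/c$-separated subset of $B(\mu,2\varepsilon)\cap K$, hence $|P|\le M(\mu,2\varepsilon,2c)$. Taking the supremum over all such $P$ yields $M(\nu,\varepsilon,c)\le M(\mu,2\varepsilon,2c)$.

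There is no genuine obstacle here — the lemma is essentially a bookkeeping statement whose only subtlety is noticing that doubling both $\varepsilon$ and $c$ leaves the packing radius invariant, which is precisely what allows the ball enlargement (needed to absorb the offset $\nu-\mu$) to be done for free.
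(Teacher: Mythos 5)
Your proposal is correct and follows essentially the same route as the paper: establish $B(\nu,\varepsilon)\subset B(\mu,2\varepsilon)$ by the triangle inequality and then invoke monotonicity of packing numbers under set inclusion. You make explicit the observation that the packing radius $2\varepsilon/(2c)=\varepsilon/c$ is unchanged, which the paper leaves implicit; otherwise the arguments are identical.
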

\begin{proof} It suffices to show that $B(\nu, \varepsilon) \cap K \subset B(\mu, 2\varepsilon) \cap K$. We will show directly that $B(\nu, \varepsilon) \subset B(\mu, 2\varepsilon)$. Take any point $x \in B(\nu, \varepsilon) $. By the triangle inequality $\|x-\mu\| \leq \|x - \nu\| + \delta \leq 2\varepsilon$ since we are assuming $\delta < \varepsilon$. This completes the proof. 
\end{proof}

Using the above lemma, one can modify the proof of Theorem \ref{upper:bound:rate} to arrive at the following adaptive version of the result.

\begin{theorem} The estimator from Algorithm \ref{test} returns a vector $\nu^*$ which satisfies the following property
\begin{align*}
    \mathbb{E} \|\mu - \nu^*\|^2 \leq \bar C \varepsilon^{*2},
\end{align*}
for some universal constant $\bar C$, where $\varepsilon^* = \varepsilon_{J^*}$ and $J^*$ is the maximal $J \geq 1$ such that $\varepsilon_J := \frac{d (c/2 - 3)}{2^{J -2}c}$ satisfies
\begin{align*}
    \frac{\varepsilon_J^2}{\sigma^2} > 16\log M\bigg(\mu, 2\varepsilon_J \frac{c}{(c/2-3)}, 2c\bigg) \vee 16\log 2,
\end{align*}
of $J^* = 1$ if no such $J$ exists.
\end{theorem}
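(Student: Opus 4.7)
The plan is to re-run the proof of Theorem \ref{upper:bound:rate} almost verbatim, making a single substitution enabled by the lemma immediately preceding the theorem. Recall that the backbone of that argument was the one-step conditional bound from Lemma \ref{most:importnant:lemma},
\[
\mathbb{P}\bigl(\|\mu - \Upsilon_j\| > d/2^{j-1} \,\big|\, \|\mu - \Upsilon_{j-1}\| \leq d/2^{j-2}, \Upsilon_{j-1}\bigr) \leq |M_{j-1}| \exp\!\bigl(-(C-2)^2 d^2/(2^{2(j-1)}(C+1)^2 \cdot 8\sigma^2)\bigr),
\]
where $|M_{j-1}| = M(\Upsilon_{j-1}, d/2^{j-2}, c)$ is the cardinality of the packing set built around the previous iterate. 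In the non-adaptive proof this cardinality was loosened, using the definition of $M^{\operatorname{loc}}$, to the worst-case quantity $M^{\operatorname{loc}}(d/2^{J-2})$. The whole point of the adaptive version is to loosen it instead to an entropy localized at the true point $\mu$.

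That is exactly what the preceding lemma supplies: on the conditioning event we have $\|\Upsilon_{j-1} - \mu\| \leq d/2^{j-2}$, and the lemma's proof really only needs $\|\nu - \mu\| \leq \varepsilon$ (it reduces to the inclusion $B(\nu, \varepsilon) \subset B(\mu, 2\varepsilon)$), so applying it with $\nu = \Upsilon_{j-1}$ and $\varepsilon = d/2^{j-2}$ yields $M(\Upsilon_{j-1}, d/2^{j-2}, c) \leq M(\mu, d/2^{j-3}, 2c)$. The monotonicity argument of Lemma \ref{simple:lemma:monotone} is pointwise in the center, so $\varepsilon \mapsto M(\mu, \varepsilon, 2c)$ is also non-increasing; for $j \leq J$ this provides the uniform-in-$j$ bound $|M_{j-1}| \leq M(\mu, d/2^{J-3}, 2c) = M\!\bigl(\mu, 2\varepsilon_J c/(c/2-3), 2c\bigr)$, which is the entropy featured in the statement.

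From here the remainder of the proof is a direct transcription of Theorem \ref{upper:bound:rate}: telescope the conditional bound across $j = 2, \ldots, J$, collapse the resulting geometric series via the standard $a/(1-a)$ estimate, and define $J^*$ to be the largest $J$ for which the modified sufficient condition $\varepsilon_J^2/\sigma^2 > 16\log M(\mu, 2\varepsilon_J c/(c/2-3), 2c) \vee 16\log 2$ holds. Coupled with the triangle inequality and $\|\nu^* - \Upsilon_J\| \leq d/2^{J-2}$, this produces a tail bound of the form $\mathbb{P}(\|\mu - \nu^*\| > \kappa\varepsilon_J) \leq \underline{C}\exp(-C'\varepsilon_J^2/\sigma^2)$ for all $J \leq J^*$, with the same absolute constants as in the original proof. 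Integrating the tail then yields $\mathbb{E}\|\mu - \nu^*\|^2 \leq \bar C \varepsilon^{*2}$.

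I do not expect a substantive obstacle here: the hard work was already done in Theorem \ref{upper:bound:rate}, and the only novelty is replacing the supremum over centers by an appeal to the preceding lemma. The one minor point worth stating explicitly in the write-up is that the lemma remains valid at the boundary case $\|\Upsilon_{j-1} - \mu\| = d/2^{j-2}$, since its proof uses only the closed inclusion of balls, so no strict inequality is actually needed.
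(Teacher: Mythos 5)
Your proposal is correct and takes essentially the same approach as the paper, whose own ``proof'' consists only of the remark that the local entropy in the telescoping bound \eqref{important:telescoping:bound:which:is:true} must be replaced and that the details are omitted. You have filled in those details accurately: the substitution $|M_{j-1}| = M(\Upsilon_{j-1}, d/2^{j-2}, c) \leq M(\mu, d/2^{j-3}, 2c) \leq M\bigl(\mu, 2\varepsilon_J c/(c/2-3), 2c\bigr)$ on the conditioning event is exactly the intended modification, and your two side observations (that the comparison lemma holds at the boundary $\|\Upsilon_{j-1}-\mu\| = d/2^{j-2}$, and that the monotonicity of Lemma \ref{simple:lemma:monotone} is pointwise in the center) are the right points to make explicit.
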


The main thing that needs to be modified is the local entropy in the bound \eqref{important:telescoping:bound:which:is:true}. We omit the details.

The final remark of this section is to observe that due to the minimaxity of the estimator in Algorithm \ref{test}, we have that it is admissible up to a universal constant. This is a trivial observation. For any estimator $\hat \nu(Y)$, there exists a point $\theta \in K$ such that 
\begin{align*}
    \mathbb{E} \|\hat \nu(Y) - \theta\|^2 \geq \bar c \varepsilon^{*2} \wedge d^2,
\end{align*}
where $\bar c$ is a universal constant. On the other hand we know that $\mathbb{E} \|\nu^*(Y) - \theta\|^2 \leq 
\bar C \varepsilon^{*2} \wedge d^2$ where $\bar C$ is another universal constant. Hence the conclusion. 




\section{Unbounded Sets with Known $\sigma^2$} \label{unbounded:sets:section}


In this section we generalize the results of Section \ref{main:results:section} to the unbounded case with known $\sigma^2$. A new algorithm is needed which runs multiple bounded algorithms and ``aggregates'' them in a way similar to how we constructed the bounded case algorithm. The only place where knowledge of $\sigma^2$ is used is to ``split'' the sample into two independent samples.

\subsection{Lower Bound}

Note that for unbounded convex sets, the lower bound remains valid. Namely, as long as, $\log M^{\operatorname{loc}}(\varepsilon)\allowbreak > 4 \varepsilon^2/\sigma^2 \vee 4 \log 2$ the minimax risk is at least $\varepsilon^2/8 c^2$. Observe also, that for a sufficiently large $c$ the term $4 \log 2$ does not have effect on the lower bound. This is so since any unbounded convex set in $\mathbb{R}^n$ contains a ray \cite[see Lemma 1 Section 2.5][e.g.]{grunbaum2013convex}, and therefore, one can position a ball of radius $\varepsilon$ on that ray so that part of the ray with length $2\varepsilon$ is fully in the ball. Then one can put $\exp(4\log 2)$ balls of radius $\varepsilon/c$ on that ray centered at equispaced points, which will ensure that $\log M^{\operatorname{loc}}(\varepsilon) > 4 \log 2$ for any $\varepsilon$.


\subsection{Upper Bound} \label{lower:boubd:section:no:constnat}

In this section we describe an algorithm for unbounded convex sets, and show it achieves the minimax rate. We start with a simple lemma. For simplicity we will assume that the given set $K$ is closed, but we remark how to fix our argument for sets that are not necessarily closed in Remark \ref{not:closed:extension}.
\begin{lemma}
For two convex sets $S,S'$ satisfying $S' \subset S$, we have that $M^{\operatorname{loc}}_{S'}(\varepsilon) \leq M^{\operatorname{loc}}_S(\varepsilon)$ for any $\varepsilon > 0$.
\end{lemma}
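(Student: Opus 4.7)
The plan is to unfold the definition of $M^{\operatorname{loc}}$ and verify the inequality pointwise in $\theta$, then take a supremum. By Definition \ref{local:entropy:def}, $M^{\operatorname{loc}}_{S'}(\varepsilon) = \sup_{\theta \in S'} M(\varepsilon/c, B(\theta,\varepsilon)\cap S')$ and similarly for $S$. Since $S' \subset S$, the supremum on the left is taken over a subset of the points over which the supremum on the right is taken, so it suffices to prove the pointwise inequality
\begin{align*}
    M(\varepsilon/c, B(\theta,\varepsilon)\cap S') \leq M(\varepsilon/c, B(\theta,\varepsilon)\cap S), \qquad \theta \in S'.
\end{align*}

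For this pointwise inequality, I would simply observe that $B(\theta,\varepsilon)\cap S' \subset B(\theta,\varepsilon)\cap S$, so that any $\varepsilon/c$-packing set of $B(\theta,\varepsilon)\cap S'$ is automatically a collection of points in $B(\theta,\varepsilon)\cap S$ whose pairwise Euclidean distances remain at least $\varepsilon/c$; hence it is also an $\varepsilon/c$-packing set of $B(\theta,\varepsilon)\cap S$. Taking the maximal cardinality on each side yields the desired pointwise bound. Combining this with $\sup_{\theta \in S'} \leq \sup_{\theta \in S}$ finishes the argument.

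There is essentially no obstacle here: the proof is a direct consequence of the monotonicity of packing numbers under set inclusion, and the sup-over-a-subset principle. The convexity hypothesis plays no active role in this particular statement (in contrast to Lemma \ref{simple:lemma:monotone}, where convexity was essential for the rescaling argument), and is presumably included only to ensure the quantities $M^{\operatorname{loc}}_{S}(\varepsilon)$ and $M^{\operatorname{loc}}_{S'}(\varepsilon)$ are being interpreted in the same framework as in the rest of the paper.
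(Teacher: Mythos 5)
Your proof is correct and follows exactly the same route as the paper's (which simply notes that $B(\theta,\varepsilon)\cap S' \subset B(\theta,\varepsilon)\cap S$ for $\theta \in S'$ and stops there); you have merely spelled out the monotonicity of packing numbers under inclusion and the sup-over-a-subset step that the paper leaves implicit. Your side remark that convexity is not actually used here is also accurate.
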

\begin{proof}
Since for any $\theta \in S'$ we have $B(\theta, \varepsilon) \cap S' \subset B(\theta, \varepsilon) \cap S$ the proof is complete.
\end{proof}

We first use the knowledge of $\sigma^2$ to ``split'' the sample. To this end let us draw $\eta \sim N(0,\mathbb{I}\sigma^2)$ independently from the observed data $Y$. Consider the variables $\tilde Y^1 = Y + \eta$ and $\tilde Y^2 = Y - \eta$. These variables are independent. Take any fixed point $\nu \in K$. We consider balls centered at $\nu$ with different radiuses $B(\nu,1)\cap K$,$B(\nu,2)\cap K$, $\ldots$, $B(\nu,2^m)\cap K$, $\ldots$ and every time compute the estimator from Algorithm \ref{test} using $\tilde Y^1$ as the ``$Y$ value''. Denote these estimators with $\{\nu_m\}_{m = 1}^{\infty}$. Note that since $K$ is closed all of these estimators are proper (i.e. they output values in $K$). The intuition for constructing these, is that for large enough $m$ these estimators will have good properties as $\mu$ will belong to the set $B(\nu,2^m)\cap K$. 
We have the following lemma regarding the sequence of estimators $\nu_m$. 

\begin{lemma}\label{compact:set:estimators} All estimators $\nu_m$ lie in a compact set.
\end{lemma}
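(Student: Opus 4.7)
The goal is to show that for every realization of $\tilde Y^1$, the sequence $\{\nu_m\}_{m=1}^\infty$ is contained in a bounded subset of $\mathbb{R}^n$. The plan is to prove the stronger bound $\|\nu_m - \tilde Y^1\| \le A \|\tilde Y^1 - \nu\|$ for some absolute constant $A$, uniformly in $m$; since $\|\tilde Y^1 - \nu\|$ is almost surely finite, this places every $\nu_m$ inside the compact ball $B(\tilde Y^1, A \|\tilde Y^1 - \nu\|)$. Introduce the notation $S_m := B(\nu, 2^m) \cap K$, $d_m := \diam(S_m) \le 2^{m+1}$, $\pi_m := \pi_{S_m}(\tilde Y^1)$ for the Euclidean projection onto the closed convex set $S_m$, and $D_m := \|\tilde Y^1 - \pi_m\|$; since $\nu \in S_m$, $D_m \le \|\tilde Y^1 - \nu\|$. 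Let $\Upsilon_j^{(m)}$ denote the iterates of Algorithm \ref{test} on $S_m$ with start $\nu$ and data $\tilde Y^1$, where $R_j = d_m/2^{j-1}$, $\rho_j = R_j/(2(C+1))$, and the maximal packing $M_j$ is in particular a $\rho_j$-cover of $B(\Upsilon_j^{(m)}, R_j) \cap S_m$.

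The core recursion is on $a_j := \|\Upsilon_j^{(m)} - \pi_m\|$. Whenever $\pi_m \in B(\Upsilon_j^{(m)}, R_j)$, the covering property gives $q \in M_j$ with $\|q - \pi_m\| \le \rho_j$, hence $\|\Upsilon_{j+1}^{(m)} - \tilde Y^1\| \le \|q - \tilde Y^1\| \le \rho_j + D_m$; combined with the standard projection inequality $\|x - \pi_m\|^2 \le \|x - \tilde Y^1\|^2 - D_m^2$ for $x \in S_m$, this yields
\[
a_{j+1}^2 \le (\rho_j + D_m)^2 - D_m^2 = \rho_j^2 + 2\rho_j D_m.
\]
The premise $a_j \le R_j$ holds trivially at $j=1$ (since $a_1 \le d_m = R_1$) and, via the recursion, propagates to step $j+1$ exactly when $\sqrt{\rho_j^2 + 2\rho_j D_m} \le R_{j+1}$, i.e., $\rho_j \ge 2 D_m / (C(C+2))$. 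Since $\rho_j$ halves at each step while $D_m$ is fixed, this defines a largest index $J$ at which the premise still holds, with $J = \infty$ iff $D_m = 0$ (in which case $a_j \to 0$ and $\nu_m = \pi_m = \tilde Y^1$).

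For $J < \infty$, the recursion at step $J$ yields $\|\Upsilon_{J+1}^{(m)} - \tilde Y^1\| \le \rho_J + D_m \lesssim D_m$, using $\rho_J < 2 D_m /(C(C+2))$ by definition of $J$. Beyond step $J$ the premise fails and the recursion stops being useful, but the Cauchy-sequence estimate in footnote 2 of Algorithm \ref{test} gives $\|\nu_m - \Upsilon_{J+1}^{(m)}\| \le 2 R_{J+1} = 2(C+1)\rho_J \lesssim D_m$. The triangle inequality then produces $\|\nu_m - \tilde Y^1\| \lesssim D_m \le \|\tilde Y^1 - \nu\|$, uniformly in $m$, completing the proof. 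The main obstacle is pinpointing the correct cutoff $J$: the recursion only delivers a quadratic-in-$\rho_j$ improvement with an additive error $2 \rho_j D_m$, so once $\rho_j$ drops below $\sim D_m$ the error term dominates and further iterations do not shrink $a_j$; one must switch to the Cauchy bound at precisely that moment for the final estimate to match $D_m$ rather than grow with $m$.
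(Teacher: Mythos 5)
Your proof is correct and follows essentially the same route as the paper's: the obtuse-angle (projection) inequality for a closed convex set, the covering property of the maximal packing to find a point within $\rho_j$ of the projection, an induction showing the projection stays inside the shrinking balls until the radius drops to the order of the distance from $\tilde Y^1$ to the projection, and the Cauchy-sequence bound for the remaining drift. The only (cosmetic) difference is that you project onto $S_m = B(\nu,2^m)\cap K$ directly, which makes the bound uniform in $m$ from the outset, whereas the paper projects onto $K$ and first waits for a threshold $N$ with $P_K\tilde Y^1 \in B(\nu,2^N)$, handling $m<N$ trivially.
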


\begin{remark}
We would like to remark that this compact set depends on $\tilde Y^1$ and the true point $\mu$. This is not an issue for our analysis since the two samples $\tilde Y^1$ and $\tilde Y^2$ are independent by construction, hence we may consider the first sample as ``frozen''.
\end{remark}
\begin{proof} For brevity throughout the proof we denote $\tilde Y^1$ with $Y$. Let $P_{K} Y $ denote the projection of $Y$ onto the set $K$ (this is a well defined operator since $K$ is assumed to be closed). At some point the radius $2^N$ will be so big that $P_{K} Y$ will be in the set $B(\nu,2^N) \cap K$. From there on, i.e. $m \geq N$, we will argue that the estimators $\nu_m$ will be close to the point $P_{K} Y$. The first packing set is at distance $\frac{d}{2(C+ 1)}$ where $d \leq 2^{m + 1}$ and $C$ is the constant from Algorithm \ref{test} (such that $2 (C + 1) = c$). Let $x = \|Y - P_{K} Y\|$. For any point $\nu \in K$ we have $\sqrt{x^2 + \|\nu - P_{K} Y\|^2} \leq \|\nu - Y\| \leq x + \|\nu - P_{K} Y\|$, where the first inequality follows by the cosine theorem, and the second one from the triangle inequality. On the other hand the closest point $\bar \nu$ from the packing set to $P_{K} Y$ satisfies $\|\bar \nu - P_{K} Y\| \leq \frac{d}{2 (C+1)}$, and therefore 
\begin{align*}
    \|\bar \nu - Y\| \leq x + \|\bar \nu - P_{K} Y\| \leq x + \frac{d}{2 (C+1)}.
\end{align*}

Take $\hat \nu$ to be the closest point to $Y$. We then have 
\begin{align*}
    \sqrt{x^2 + \|\hat \nu - P_{K} Y\|^2} \leq \|\hat \nu - Y\|  \leq \|\bar \nu - Y\| \leq x +  \frac{d}{2 (C+1)}.
\end{align*}
It follows that 
\begin{align*}
    \|\hat \nu - P_{K} Y\|^2 \leq 2 x \frac{d}{2 (C+1)} +  \bigg(\frac{d}{2 (C+1)}\bigg)^2 \leq 3 \bigg (\frac{d}{2 (C+1)}\bigg)^2,
\end{align*}
assuming that $x \leq \frac{d}{2 (C+1)}$. Since $C \geq 2$ this implies that $ \|\hat \nu - P_{K} Y\| \leq \frac{d}{2}$, and thus the point $P_{K} Y$ will be in the chosen ball for the second step. We can continue this logic until, $x \geq \frac{d}{2^k (C+1)}$. At this point we know that the estimator will be within distance $\frac{d}{2^{k-2}}$ of the central point, which is at distance at most $\frac{d}{2^{k-1}}$ from $P_{K} Y$, so that the final estimator will be at distance at most $\frac{3 d}{2^{k-1}} \leq 6(C+1)x$ from $P_{K} Y$. This completes the proof that all estimators will be on a compact set since the initial ones fall into a ball of radius $2^N$ and are also in a compact set. 
\end{proof}
\begin{remark}
The lemma above extends to the case where $K$ is not closed. The only thing that needs to be modified in the proof is that $P_K Y$ should be interpreted as $P_{\overline{K}} Y$ where as usual $\overline{K}$ is the closure of $K$.
\end{remark}

Define $\tilde C = \frac{c}{4} - 1$, where $c$ is the local packing constant from Definition \ref{local:entropy:def}. Once we have established Lemma \ref{compact:set:estimators}, we can proceed to propose Algorithm \ref{test:v2}. As we mentioned previously, this algorithm runs multiple bounded algorithms and ``aggregates'' them in a way similar to how Algorithm \ref{test} works. 

\begin{algorithm}
\SetKwComment{Comment}{/* }{ */}
\caption{Upper Bound Algorithm (Unbounded Case)}\label{test:v2}
\KwInput{A sequence of estimators $\mathcal{E} := \{\nu_m\}_{m \in \mathbb{N}} \subset K$; $d$ the diameter of $\mathcal{E}$ which is bounded by Lemma \ref{compact:set:estimators}; $\nu^* \in \mathcal{E}$ an arbitrary point.}
$k \gets 1$\;
$\Upsilon \gets [\nu^*]$\;
\While{TRUE}{
    Take a $\frac{d}{2^{k + 1}(\tilde C+1)}$ maximal\footnotemark[4] packing set $M_k$ of the set $B\big(\nu^*, \frac{d}{2^{k-1}}\big) \cap \mathcal{E}$ \Comment*[r]{The packing sets should be constructed in a special way as described in the proof of Theorem \ref{measurability:theorem:unbounded:case} to ensure measurability}
    $\nu^* \gets \argmin_{\nu \in M_k} \|\tilde Y^2 - \nu\| $ \Comment*[r]{Break ties by taking the point with smallest index in $\mathcal{E}$}
    $\Upsilon$.append$(\nu^*)$\;
    $k \gets k + 1$\;
}
\Return{$\nu^*$}  \Comment*[r]{Observe that by definition $\Upsilon$ forms a Cauchy sequence, so $\nu^*$ can be understood as the limiting point of that sequence.}
\end{algorithm}
\footnotetext[4]{It is not important for the packing set to be maximal as long as it is a covering set. See Theorem \ref{measurability:theorem:unbounded:case} for a specification of how to construct these sets to ensure measurability.}

Before we proceed with the proof of why Algorithm \ref{test:v2} works, we will show that the estimator produced by it is measurable. We have

\begin{theorem}\label{measurability:theorem:unbounded:case} We have that $\nu^* : \RR^n \times \RR^n \mapsto \RR^n$ is a measurable function (with respect to the Borel $\sigma$-field). As a consequence $\nu^*(Y, \eta)$ is a random variable.
\end{theorem}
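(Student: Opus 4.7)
The plan is to build $\nu^*$ as a countable limit of measurable mappings, by first showing that the whole sequence $\mathcal{E} = \{\nu_m\}_{m \in \mathbb{N}}$, the diameter $d$, the packing sets $M_k$, and the iterates $\Upsilon_j$ are all jointly Borel measurable in $(Y,\eta)$, and then invoking the Cauchy property established earlier.

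First I would verify that each $\nu_m : \mathbb{R}^n \mapsto \mathbb{R}^n$ is measurable as a function of $\tilde Y^1 = Y + \eta$. Each $\nu_m$ is the output of Algorithm \ref{test} run on the bounded closed convex set $B(\nu, 2^m) \cap K$ with data $\tilde Y^1$, so Theorem \ref{measurability:theorem} applies directly and yields Borel measurability of $\nu_m(\tilde Y^1)$. Because $\tilde Y^1$ is a continuous (hence Borel) function of $(Y, \eta)$, each $\nu_m$ is Borel measurable in $(Y, \eta)$. The diameter $d = \sup_{m, m' \in \mathbb{N}} \|\nu_m - \nu_{m'}\|$ of $\mathcal{E}$ is a countable supremum of measurable functions and hence measurable as well.

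Next, I would specify the packing sets $M_k$ in a deterministic, measurable way that still produces a covering. Given $\Upsilon_{k-1}$ and $d$, scan the points $\nu_1, \nu_2, \ldots$ of $\mathcal{E}$ in index order and greedily include $\nu_m$ in $M_k$ whenever
\[
\|\nu_m - \Upsilon_{k-1}\| \leq \frac{d}{2^{k-1}} \quad \text{and} \quad \|\nu_m - \nu_{m'}\| \geq \frac{d}{2^{k+1}(\tilde C + 1)} \ \text{for every $\nu_{m'}$ already in $M_k$.}
\]
By induction on the scanning index, the event $\{\nu_m \in M_k\}$ is built from countably many measurable events in $\tilde Y^1$, $\tilde Y^2$ (through $\Upsilon_{k-1}$) and $d$, so it is Borel. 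This construction produces a $\frac{d}{2^{k+1}(\tilde C + 1)}$-packing of $B(\Upsilon_{k-1}, d/2^{k-1}) \cap \mathcal{E}$ that is also a covering, which is what the algorithm actually needs.

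Now I would prove by induction on $j$ that $\Upsilon_j$ is measurable in $(Y, \eta)$. The base case $\Upsilon_1 = \nu_1$ is covered by the first step. For the inductive step, since $M_j \subset \mathcal{E}$ is at most countable and $\mathcal{E}$ itself is indexed by $\mathbb{N}$, one can decompose
\[
\{\Upsilon_j = \nu_m\} = \{\nu_m \in M_j\} \cap \bigcap_{m' < m} \big(\{\nu_{m'} \notin M_j\} \cup \{\|\tilde Y^2 - \nu_m\| < \|\tilde Y^2 - \nu_{m'}\|\}\big) \cap \bigcap_{m' > m} \big(\{\nu_{m'} \notin M_j\} \cup \{\|\tilde Y^2 - \nu_m\| \leq \|\tilde Y^2 - \nu_{m'}\|\}\big),
\]
which is a countable intersection of measurable sets and hence Borel. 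Writing $\Upsilon_j = \sum_{m \in \mathbb{N}} \nu_m \mathbbm{1}\{\Upsilon_j = \nu_m\}$ then exhibits $\Upsilon_j$ as a measurable function. Finally, since $\Upsilon_j$ is a Cauchy sequence in $\mathbb{R}^n$ (as verified after Algorithm \ref{test}), $\nu^* = \lim_j \Upsilon_j$ exists pointwise, and the same $\liminf/\limsup$ argument used in the proof of Theorem \ref{measurability:theorem} shows that the coordinate maps $\nu^{*,i}(Y, \eta) = \inf_{i \geq 1} \sup_{k \geq i} \Upsilon_k^i$ are Borel, completing the proof.

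The main obstacle is the fact that, in contrast to Algorithm \ref{test}, here both the candidate set $\mathcal{E}$ and the radii of the packings depend on the data (through $\tilde Y^1$), so packing sets cannot be preconstructed. The fix is the deterministic, index-ordered greedy construction described above, which couples the packing sets to $(\tilde Y^1, \tilde Y^2)$ in a Borel-measurable fashion while preserving the covering property needed for the rate analysis.
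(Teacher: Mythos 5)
Your proposal is correct and follows essentially the same route as the paper: an index-ordered greedy construction of the packing sets (so that membership events are Borel), a decomposition of $\{\Upsilon_j = \nu_m\}$ into countable Boolean combinations of measurable distance-comparison events with tie-breaking by index, and the $\liminf/\limsup$ limit argument from Theorem \ref{measurability:theorem}. The only difference is bookkeeping — the paper enumerates candidate index sets $S$ and writes out the event ``$S$ is the index set'' explicitly, while you fold packing membership into per-index events $\{\nu_{m'} \in M_j\}$ — which is an equivalent (and arguably tidier) organization of the same argument.
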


\begin{proof} 
We will show that each element in the sequence $\Upsilon_j$ is measurable. Since they form a Cauchy sequence their limit will also be measurable by an argument similar to the one in Theorem \ref{measurability:theorem}. Throughout the proof, so as to not overburden notation, for the most part we will suppress the dependence of the estimators $\nu_m$ on $\tilde y^1 = y + \eta$ and will simply write $\nu_m$. We will also suppress the dependence of $\Upsilon_j$ on $y$ and $\eta$.

We will select a packing set greedily starting with the minimum index that belongs to the ball on the $k$-th step, then carving a ball out centered at that minimum index, and next considering the minimum index that is in the bigger ball but is out of the carved out ball and so on. We will first show that $\Upsilon_1$ is measurable. For $\Upsilon_1$ the big ball on the $1$-st step contains all estimators $\nu_m$ hence we start from $\nu_1$. We will show that the event $\Upsilon_1 = \nu_j$ is a measurable event, and since as we know from before each $\nu_j$ is measurable, and the identity $(y, \eta: \Upsilon_1 \in B) = \cup_j (y, \eta : \Upsilon_1 = \nu_j) \cap (y, \eta : \nu_j(y + \eta) \in B)$ for any hyperrectangle $B$ we will have that $\Upsilon_1$ is measurable. We will now give a little details about the measurability of the event $(y, \eta : \nu_j(y + \eta) \in B)$. For $(y, \eta : \nu_j(y + \eta) \in B) = (y,\eta : y + \eta \in B')$ for some Borel set $B'$ by the measurability of $\nu_j$. This is a Borel set since the function $(y,\eta) \mapsto y + \eta$ is continuous and hence measurable.

Let us call the index set of the chosen packing (according to the strategy described above), ``the index set''. We then have the identity:
\begin{align*}
    \{y, \eta: \Upsilon_1 = \nu_j\} &= \cup_{S: j \in S, |S| \leq M^{\operatorname{loc}}(r)} \bigg(\{y,\eta : S \mbox{ is the index set}\} \cap \\
    &\cap_{i \in S}\{y, \eta: \|\nu_j - \tilde y^2\| \leq \|\nu_i - \tilde y^2\|\} \cap_{i \in S, i \leq j} \{y, \eta: \|\nu_i - \tilde y^2\| \neq \|\nu_j - \tilde y^2\|\}\bigg),
\end{align*}
where we put for brevity $r = d/(4(\tilde C + 1))$ and $\tilde y^2 = y - \eta$. Let $S = (s_1,s_2,\ldots, s_m)$ (note that $s_1 = 1$ always has to belong in $S$). The above events in the latter two intersections are measururable since for two measurable functions $X$ and $Y$ the events $X \leq Y$ and $X \neq Y$ are measurable, the function $\|\cdot\|$ is continuous hence measurable, the sum (difference) of two measurable functions is measurable, and the maps $\nu_j(y + \eta)$ and $y - \eta$ are measurable (as we argued earlier and by continuity). Now, the event that $S$ is the index set is
\begin{align*}
    \{y, \eta: S \mbox{ is the index set}\} & = \cap_{k = 2}^{s_2 - 1}\{y, \eta: \|\nu_1 - \nu_k\| \leq r \} \cap \{y, \eta: \|\nu_1 - \nu_{s_2}\| > r\} \cap \\
    & \cap_{k = s_2 + 1}^{s_3-1} \{y, \eta: \|\nu_1 - \nu_k\| \leq r\} \cup \{\omega: \|\nu_{s_2} - \nu_k\| \leq r\} \\
    & \cap \{y, \eta:\|\nu_1 - \nu_{s_3}\| > r\} \cap \{y, \eta: \|\nu_{s_2} - \nu_{s_3}\| > r\} \cap\\
    &\ldots\\
    &\cap_{k \geq s_m + 1} (\{y, \eta: \|\nu_1 - \nu_k\| \leq r\} \cup \{y, \eta: \|\nu_{s_2} - \nu_k\| \leq r\} \cup \\
    & \ldots \cup\{y, \eta: \|\nu_{s_m} - \nu_k\| \leq r\}),
\end{align*}
which is clearly measurable (by continuity of $\|\cdot\|$, and the fact that the difference of measurable functions is measurable). This completes the proof that $\Upsilon_1$ is measurable. We will now argue that $\Upsilon_2$ is also measurable using the same trick. Observe that the identity:
\begin{align*}
    \{y, \eta: \Upsilon_2 = \nu_j\} &= \cup_{S: j \in S, |S| \leq M^{\operatorname{loc}}(r)} \bigg(\{y, \eta: S \mbox{ is the index set}\} \cap \\
    &\cap_{i \in S}\{y, \eta: \|\nu_j - \tilde y^2\| \leq \|\nu_i - \tilde y^2\|\} \cap_{i \in S, i \leq j} \{y, \eta: \|\nu_i - \tilde y^2\| \neq \|\nu_j - \tilde y^2\|\}\bigg),
\end{align*}
continues to hold for $\Upsilon_2$ with the only difference that $r = d/(8(\tilde C + 1))$. We will now show that the event $\{y, \eta: S \mbox{ is the index set}\}$ continues to be measurable for $\Upsilon_2$. We have
\begin{align*}
     \{y, \eta: S \mbox{ is the index set}\} & = \cap_{k = 1}^{s_1 - 1}\{ y, \eta: \|\Upsilon_1 - \nu_k\| > d/2\} \cap \{y, \eta: \|\Upsilon_1 - \nu_{s_1}\| \leq d/2\}\\
     & \cap_{k = s_1 + 1}^{s_2 - 1} (\{ y, \eta: \|\Upsilon_1 - \nu_k\| > d/2\} \cup \{\omega: \|\nu_{s_1} - \nu_k\| \leq r \}) \\
     &\cap (\{ y, \eta: \|\Upsilon_1 - \nu_{s_2}\| \leq d/2\} \cap \{y, \eta: \|\nu_{s_1} - \nu_{s_2}\| > r\}) \cap \\
    &\ldots\\
    &\cap_{k \geq s_m + 1} (\{ y, \eta: \|\Upsilon_1 - \nu_k\| > d/2\} \cup \{y, \eta: \|\nu_1 - \nu_k\| \leq r\} \\
    & \cup \{y, \eta: \|\nu_{s_2} - \nu_k\| \leq r\} \cup \ldots \cup\{y, \eta: \|\nu_{s_m} - \nu_k\| \leq r\}),
\end{align*}

Clearly, all of the above are measurable events, and therefore $\Upsilon_2$ is measurable. Proving that all subsequent $\Upsilon_j$ are measurable is the same as proving that $\Upsilon_2$ is measurable which completes the proof.

\end{proof}

Next we prove a modification of Lemma \ref{most:importnant:lemma}. The setting is as follows. We are given $M$ points $\nu_1, \ldots, \nu_M \in K$ such that $\min \|\nu_i - \mu\| \leq \rho$.

\begin{lemma}\label{most:importnant:lemma:v2}Let $i^* = \argmin_i \|\tilde Y^2 - \nu_i\|$. We will show that the closest point to $\tilde Y^2$, $\nu_{i^*}$ satisfies
\begin{align*}
    \mathbb{P}(\|\nu_{i^*} - \mu\| > (C + 1)\rho) \leq M \exp(-(C - 2)^2 \rho^2/(16\sigma^2)),
\end{align*}
for any fixed $C > 2$.
\end{lemma}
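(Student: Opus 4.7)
The plan is to mirror the proof of Lemma \ref{most:importnant:lemma} almost verbatim, with a single adjustment: the relevant driving noise for the test based on $\tilde Y^2$ is no longer $N(0,\sigma^2\mathbb{I})$ but $N(0,2\sigma^2\mathbb{I})$. Indeed, $\tilde Y^2 = Y - \eta = \mu + (\xi - \eta)$, and since $\xi$ and $\eta$ are independent $N(0,\sigma^2\mathbb{I})$ random vectors, we have $\xi - \eta \sim N(0,2\sigma^2\mathbb{I})$. Conditioning on $\tilde Y^1$ (which is independent of $\tilde Y^2$ by construction), the packing $\{\nu_1,\ldots,\nu_M\}$ is fixed, so we may treat it as deterministic in the analysis that follows.

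With this set up, I would proceed as follows. First, pick an index $i_0 \in [M]$ with $\|\mu - \nu_{i_0}\| \leq \rho$ (which exists by hypothesis). Define the intermediate quantity
\begin{align*}
    T_{i_0} = \begin{cases} \max_{j \in [M]} \|\nu_{i_0} - \nu_j\|, & \text{s.t. } \|\tilde Y^2 - \nu_{i_0}\| \geq \|\tilde Y^2 - \nu_j\| \text{ and } \|\nu_{i_0} - \nu_j\| > C\rho, \\ 0, & \text{if no such } j \text{ exists}. \end{cases}
\end{align*}
By the triangle inequality, if $\|\nu_{i^*} - \mu\| > (C+1)\rho$ then $\|\nu_{i^*} - \nu_{i_0}\| > C\rho$, and since $i^*$ minimizes $\|\tilde Y^2 - \nu_j\|$, this forces $T_{i_0} \geq \|\nu_{i_0} - \nu_{i^*}\| > 0$. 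Hence $\mathbb{P}(\|\nu_{i^*}-\mu\| > (C+1)\rho) \leq \mathbb{P}(T_{i_0} > 0)$, and a union bound gives $\mathbb{P}(T_{i_0} > 0) \leq \sum_{j:\|\nu_j - \nu_{i_0}\| > C\rho} \mathbb{P}(\|\tilde Y^2 - \nu_{i_0}\| \geq \|\tilde Y^2 - \nu_j\|)$.

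Each summand is controlled by a direct re-run of the computation in Lemma \ref{important:lemma} with the noise variance doubled: the quantity $\|\tilde Y^2 - \nu_{i_0}\|^2 - \|\tilde Y^2 - \nu_j\|^2$ is a Gaussian with mean at most $-(1 - 2/C)\|\nu_{i_0}-\nu_j\|^2 < 0$ and variance $4 \cdot 2\sigma^2 \|\nu_{i_0}-\nu_j\|^2 = 8\sigma^2\|\nu_{i_0}-\nu_j\|^2$, so the standard normal tail bound yields probability at most $\exp(-(C-2)^2 \rho^2/(16\sigma^2))$. Summing over the at most $M$ indices $j$ gives the claim. There is no real obstacle here; the only point to watch is that the independence of $\tilde Y^1$ and $\tilde Y^2$ must be invoked before applying the pairwise testing bound, because the candidate set $\{\nu_1,\ldots,\nu_M\}$ is measurable with respect to $\tilde Y^1$, and this independence is precisely what converts the construction into a pure Gaussian testing problem with noise $N(0,2\sigma^2\mathbb{I})$.
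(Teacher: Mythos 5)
Your proposal is correct and follows essentially the same route as the paper's own proof: the same intermediate variable $T_{i_0}$, the same triangle-inequality reduction to the event $T_{i_0}>0$, and the same union bound over pairwise tests with the noise variance doubled to $2\sigma^2$ because $\tilde Y^2 = \mu + (\xi-\eta)$. The only cosmetic difference is that you re-derive the pairwise testing bound in place rather than citing Lemma \ref{important:lemma} directly; your explicit remark about conditioning on $\tilde Y^1$ to freeze the candidate set is a point the paper makes elsewhere and is a welcome clarification rather than a deviation.
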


\begin{proof}
Define the intermediate random variable
\begin{align*}
    T_i  = \begin{cases}
         \max_{j \in [M]}  \|\nu_i - \nu_j\|, \mbox{ s.t. } \|\tilde Y^2 - \nu_i\| - \|\tilde Y^2 - \nu_j\| \geq 0,\|\nu_i - \nu_j\| > C\rho\\
         0, \mbox{if no such $j$ exists},
    \end{cases}
\end{align*}
Without loss of generality assume that $\|\mu - \nu_i\| \leq \rho$. Next, we have that
\begin{align*}
    \mathbb{P}(\|\nu_{i^*} - \mu\| > \rho + C\rho) & \leq \mathbb{P}(i^* \in \{j: \|\nu_j - \nu_i\| > C\rho\})\\
    & \leq P(T_i > 0),
\end{align*}
where the first inequality follows by the triangle inequality and the second because if $i^* \in \{j: \|\nu_j - \nu_i\| \geq C\rho\}$ we have $T_{i} \geq \|\nu_i - \nu_{i^*}\| > C\rho$. But
\begin{align*}
    \mathbb{P}(T_i > 0) & = \mathbb{P}(\exists j: \|\nu_j - \nu_i\| > C\rho \mbox{ and } \|\tilde Y^2 - \nu_i\| - \|\tilde Y^2 - \nu_j\| \geq 0)
    \\& \leq M \exp(-(C-2)^2 \rho^{2}/(16\sigma^2)),
\end{align*}
by Lemma \ref{important:lemma} (here we used the fact that $\xi_i - \eta_i \sim N(0,2\sigma^2)$). This is what we wanted to show.
\end{proof}

\begin{theorem}\label{unbounded:thm} The estimator from Algorithm \ref{test:v2} returns a vector $\nu^*$ which satisfies the following property
\begin{align*}
    \mathbb{E} \|\mu - \nu^*\|^2 \leq \bar C \varepsilon^{*2},
\end{align*}
for some universal constant $\bar C$, where $\varepsilon^*$ is the smallest solution to
\begin{align}\label{inequality:unbounded:case}
    \frac{\varepsilon^2}{\sigma^2} > 32\log M^{\operatorname{loc}}\bigg(\varepsilon \frac{c}{c/2-3}\bigg) \vee 32\log 2.
\end{align}
We remind the reader that $c$ is the constant from the definition of local entropy, which is assumed to be sufficiently  large.
\end{theorem}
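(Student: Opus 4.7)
Exploiting the independence of $\tilde Y^1$ and $\tilde Y^2$, I reduce the problem to two pieces: first, locate an ``anchor'' $\tilde\mu = \nu_{m_0} \in \mathcal{E}$ close to $\mu$ (built from $\tilde Y^1$); second, show that Algorithm \ref{test:v2} finds $\tilde\mu$ using the independent $\tilde Y^2$, via essentially the same telescoping packing argument as in Theorem \ref{upper:bound:rate}.

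For the anchor, I condition on $\tilde Y^1$ and pick any $m_0$ with $\mu \in B(\nu, 2^{m_0}) \cap K$. Since $\tilde Y^1 = \mu + \xi + \eta$ has coordinate variance $2\sigma^2$, and $\nu_{m_0}$ is produced by Algorithm \ref{test} on the bounded convex set $B(\nu, 2^{m_0}) \cap K$, Theorem \ref{upper:bound:rate} applied with $\sigma^2$ replaced by $2\sigma^2$ (which turns the $16$ in \eqref{upper:bound:suff:cond} into the $32$ of \eqref{inequality:unbounded:case}), together with the subset monotonicity $\log M^{\operatorname{loc}}_{B(\nu, 2^{m_0}) \cap K}(\cdot) \leq \log M^{\operatorname{loc}}_K(\cdot)$, yields a sub-Gaussian tail $\mathbb{P}(\|\tilde\mu - \mu\| \geq C x) \leq C' \exp(-C'' x^2/\sigma^2)$ for every $x \geq \varepsilon^*$. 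In particular $\delta := \|\tilde\mu - \mu\| \lesssim \varepsilon^*$ with high probability over $\tilde Y^1$.

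For the second piece, I mimic the telescoping in the proof of Theorem \ref{upper:bound:rate} applied to the countable set $\mathcal{E} \subset K$. Writing $R_k$ and $r_k$ for the ball radius and packing distance at step $k$ of Algorithm \ref{test:v2} (which satisfy $R_k = c\, r_k$): if $\|\Upsilon_{k-1} - \mu\| \leq R_{k-1}/2$ and $\delta \leq R_{k-1}/2$, then $\tilde\mu \in B(\Upsilon_{k-1}, R_{k-1}) \cap \mathcal{E}$, so by the covering property of the maximal packing some $\nu \in M_{k-1}$ satisfies $\|\nu - \mu\| \leq r_{k-1} + \delta$. Lemma \ref{most:importnant:lemma:v2} (whose $16\sigma^2$ in the exponent absorbs the doubled variance of $\tilde Y^2$) then bounds $\|\Upsilon_k - \mu\| \leq (\tilde C+1)(r_{k-1}+\delta)$ with failure probability at most $|M_{k-1}| \exp(-(\tilde C-2)^2(r_{k-1}+\delta)^2/(16\sigma^2))$; and $|M_{k-1}| \leq M^{\operatorname{loc}}_K(R_{k-1}) \leq M^{\operatorname{loc}}_K(R_{J^*})$ uniformly for $k \leq J^*$, by the inclusion $\mathcal{E} \subset K$ and Lemma \ref{simple:lemma:monotone}. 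Telescoping as in \eqref{important:telescoping:bound:which:is:true} down to the smallest $J^*$ with $r_{J^*} \asymp \varepsilon^*$ gives $\|\Upsilon_{J^*} - \mu\| \lesssim \varepsilon^*$ with high probability, and the remaining iteration tail is controlled by the Cauchy property $\|\nu^* - \Upsilon_{J^*}\| \leq \sum_{k \geq J^*} R_k \lesssim R_{J^*} \lesssim \varepsilon^*$. Combining the two tail bounds by a union bound and integrating as in \eqref{important:prob:bound} yields $\mathbb{E}\|\nu^* - \mu\|^2 \lesssim \varepsilon^{*2}$.

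The main technical obstacle is handling scales $r_k \lesssim \delta$, where the packing can no longer resolve $\mu$ tighter than $O(\delta)$: rather than trying to continue the telescoping past this threshold, I truncate the analysis at $J^*$ and control the subsequent iterations via the geometric Cauchy tail, which contributes only $O(\varepsilon^*)$ to the final displacement. A secondary subtlety is that the diameter $d$ of $\mathcal{E}$ is $\tilde Y^1$-measurable and hence random; however, all failure probabilities depend only on the ratios $r_k/\sigma$ and on entropies of $K$ (which are $\tilde Y^1$-independent), so this random scale has no effect on the final rate.
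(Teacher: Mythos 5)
Your proposal is correct and follows essentially the same route as the paper's proof: split the sample, use the bounded-case Theorem \ref{upper:bound:rate} (with variance $2\sigma^2$, whence the constant $32$) to get a sub-Gaussian tail on the distance from $\mu$ to the nearest anchor in $\mathcal{E}$, then rerun the telescoping argument of the bounded case on $\mathcal{E}$ via Lemma \ref{most:importnant:lemma:v2}, bounding each $|M_k|$ by the local entropy of $K$ and cutting off the recursion at the scale where the packing distance drops to the order of the anchor error. The paper's only cosmetic differences are that it anchors at the infimum-achieving limit point $\bar\nu$ rather than a fixed $\nu_{m_0}$ and carries the event $\{\rho > r_J\}$ as an explicit indicator in the tail bound rather than truncating deterministically at $J^*$.
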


\begin{remark}\label{remark:no:constnant:upper:bound}
For $c$ large enough inequality \eqref{inequality:unbounded:case} is equivalent to simply
\begin{align*}
    \frac{\varepsilon^2}{\sigma^2} > 32\log M^{\operatorname{loc}}\bigg(\varepsilon \frac{c}{c/2-3}\bigg),
\end{align*}
since one can always take the center of the ball lying on an infinite ray (which exists \cite[see Lemma 1 Section 2.5][e.g.]{grunbaum2013convex}), and then there will exist at least $\exp(\log 2)$ equispaced points on that ray.
\end{remark}

\begin{remark}
Note that the expected value in \eqref{inequality:unbounded:case} is taken with respect to both $\xi$ and $\eta$. It is clear by Jensen's inequality, that the estimator $\EE_{\eta} \nu^*(Y,\eta)$ satisfies
\begin{align*}
    \EE_{\xi} \|\mu - \EE_{\eta} \nu^*(Y,\eta)\|^2 \leq \mathbb{E} \|\mu - \nu^*\|^2 \leq \bar C \varepsilon^{*2}.
\end{align*}
Note that since $\EE_{\eta} \nu^*(Y,\eta) = \EE [\nu^*(Y,\eta) | Y]$ it is a measurable function of the data $Y$, and therefore achieves the minimax rate as shown in Proposition \ref{proposition:minimax:rate:unbounde:set}.
\end{remark}


\begin{proof}
Let $\rho = \inf_{j} \|\mu - \nu_j\|$, and let $\bar \nu$ be a limiting point of $\nu_j$ such that $\rho = \|\mu - \bar \nu\|$. Note that $\rho$ is fixed given $\tilde Y^1$. We know that for the $N$-th estimator where $N$ is such that $2^N \geq \|\mu - \nu\|$ we have that the conditions of Theorem \ref{upper:bound:rate} are fulfilled and by \eqref{important:prob:bound} therefore
\begin{align}\label{to:modify:thm:unbounded}
    \mathbb{P}(\rho > 2 \kappa x) \leq \mathbb{P}(\|\mu - \nu_N\|  > 2 \kappa x) \leq \underline C \exp(-C'x^2/\sigma^2) \mathbbm{1}(J^* > 1),
\end{align}
which holds true for $x \geq \varepsilon^*$, where $\varepsilon^* = \varepsilon_{J^*} = \frac{(C-2)\operatorname{diam}(B(\nu, 2^N) \cap K)}{(C+1) 2^{J^* - 1}}$, and where $J^*$ is the maximum $J$ selected so that $\frac{\varepsilon_J^{2}}{2\sigma^2} > 16\log M_{B(\nu, 2^{N})\cap K}^{\operatorname{loc}}\bigg(\varepsilon_J \frac{2 (C+1)}{(C-2)}\bigg) \vee 16\log 2$ of $J^* = 1$ if such $J$ does not exist. Here we have $2\sigma^2$ in the denominator since $\xi_i + \eta_i \sim N(0, 2\sigma^2)$.

For any $J$ such that $\frac{d}{2^{J + 1} (\tilde C + 1)} \geq \rho$ by Lemma \ref{most:importnant:lemma:v2} we have the following bound (recall that $c = 4(\tilde C+1)$ where $c$ is the constant from the definition of local packing entropy):
\begin{align*}
    \MoveEqLeft \mathbb{P}\bigg(\|\bar \nu - \Upsilon_J\| > \frac{d}{2^{J-1}} \bigg | \|\bar \nu - \Upsilon_{J-1}\|\leq \frac{d}{2^{J-2}}, \tilde Y^1, \Upsilon_{J-1}\bigg)
    \\&\leq\mathbb{P}\bigg(\|\bar \nu - \Upsilon_J\| > \rho + (\tilde C+1)(\frac{d}{2^{J}(\tilde C + 1)} + \rho) \bigg | \|\bar \nu - \Upsilon_{J-1}\|\leq \frac{d}{2^{J-2}}, \tilde Y^1, \Upsilon_{J-1}\bigg)\\
    &\leq \mathbb{P}\bigg(\|\mu - \Upsilon_J\| >  (\tilde C+1)(\frac{d}{2^{J}(\tilde C + 1)} + \rho) \bigg | \|\bar \nu - \Upsilon_{J-1}\|\leq \frac{d}{2^{J-2}}, \tilde Y^1, \Upsilon_{J-1}\bigg)\\
    & \leq  |M_{J-1}|\exp(-(\tilde C-2)^2(d/(2^J(\tilde C+1)) + \rho)^2/(16\sigma^2)))\\
    & \leq M^{\operatorname{loc}}\bigg(\frac{d}{2^{J - 2}}\bigg)\exp(-(\tilde C-2)^2(d/(2^J(\tilde C+1)) + \rho)^2/(16\sigma^2))).
\end{align*}
Since the bound doesn't depend on the value of $\Upsilon_{J-1}$, we can drop it from the conditioning.
Telescoping this bound by the union bound gives us that
\begin{align*}
    \mathbb{P}(\|\mu - \Upsilon_{J}\| > \rho + \frac{d}{2^{J-1}} | \tilde Y^1) &
    \leq M^{\operatorname{loc}}\bigg(\frac{d}{2^{J - 2}}\bigg) \sum_{j = 2}^{J} \exp(-(\tilde C-2)^2(d/(2^j(\tilde C+1)) + \rho)^2/(16\sigma^2)))\nonumber\\
    & \leq M^{\operatorname{loc}}\bigg(\frac{d}{2^{J - 2}}\bigg) \sum_{j = 2}^{J} \exp(-(\tilde C-2)^2(d/(2^{j}(\tilde C+1))^2/(16\sigma^2)))\\
    & \leq M^{\operatorname{loc}}\bigg(\frac{d}{2^{J - 2}}\bigg) a (1 + a^{4-1} + a^{16-1} + \ldots)\mathbbm{1}(J > 1)\nonumber\\
    & \leq M^{\operatorname{loc}}\bigg(\frac{d}{2^{J - 2}}\bigg) \frac{a}{1-a} \mathbbm{1}(J > 1)
\end{align*}
where for brevity we put $a =  \exp\bigg(\frac{-(\tilde C-2)^2d^2}{(2^{2 J}(\tilde C+1)^2)(16\sigma^2)}\bigg)$, and we are assuming that $ a < 1$. 

So if one sets $\varepsilon_J = \frac{(\tilde C-2)d}{2^{J}(\tilde C+1)}$, we have that if $ \varepsilon_J^2/(16\sigma^2) > 2 \log M^{\operatorname{loc}}\bigg(\varepsilon_J \frac{4 (\tilde C+1)}{(\tilde C-2)}\bigg)$ and \\$\exp(-\varepsilon_J^2/(16\sigma^2)) < 1/2$, the above probability will be bounded from above by $2\exp(-\varepsilon_J^2/(32\sigma^2))$. Since 
\begin{align*}
    2 \log M^{\operatorname{loc}}\bigg(\varepsilon_J \frac{4 (\tilde C+1)}{(\tilde C-2)}\bigg) \leq 2 \bigg(\log 2 \vee \log M^{\operatorname{loc}}\bigg(\varepsilon_J \frac{4 (\tilde C+1)}{(\tilde C-2)}\bigg)\bigg),
\end{align*} 
this condition is implied when $\frac{\varepsilon_J^2}{\sigma^2} > 32\log M^{\operatorname{loc}}\bigg(\varepsilon_J \frac{4 (\tilde C+1)}{(\tilde C-2)}\bigg) \vee 32\log 2$.

Below constants can change values from line to line. By the triangle inequality we have that $\|\nu^* - \mu\| \leq \|\nu^* - \Upsilon_J\| + \|\Upsilon_J - \mu\| \leq \rho + 6\varepsilon_J\frac{\tilde C+1}{\tilde C-2} \leq 7\varepsilon_J\frac{\tilde C+1}{\tilde C-2}$ with probability at least $1 - 2\exp(-\varepsilon_J^2/(32\sigma^2))$. 
Let $J^{**}$ be selected as the maximum $J$ such that $\frac{\varepsilon_J^2}{\sigma^2} > 32\log M^{\operatorname{loc}}\bigg(\varepsilon_J \frac{4 (\tilde C+1)}{(\tilde C-2)}\bigg) \vee 32\log 2$ otherwise if such $J$ does not exist $J^{**} = 1$.
We have shown that for all $J \leq J^{**}$ we have
\begin{align*}
    \MoveEqLeft \mathbb{P}(\|\mu - \nu^*\| > \frac{7}{2} \frac{d}{2^{J-1}}) \leq \underline{\underline C} \exp(-C' (d/2^{J-1})^2/\sigma^2)\mathbbm{1}(J^{**} > 1) \\
    &+ \mathbbm{1}\bigg(\frac{d}{2^{J + 1} (\tilde C + 1)} \leq 2\kappa \varepsilon^*\bigg) + C'' \exp(-C'''(d/2^{J-1})^2/\sigma^2)\mathbbm{1}(J^* > 1),
\end{align*}
where the last two summands, come from controlling the probability of the event $\frac{d}{2^{J + 1}(\tilde C + 1)} < \rho$. Hence for any $x \geq \varepsilon^{**} > 0$ (since if $\epsilon^{**} = 0$ then necessarily $\sigma = 0$ in which case the algorithm will return the point $\tilde Y^1 = \tilde Y^2 = \mu$) we have
\begin{align*}
    \MoveEqLeft \mathbb{P}(\|\mu - \nu^*\| \geq 8 x) \leq \mathbb{P}(\|\mu - \nu^*\| > 7x) \leq \underline{\underline C} \exp(-C' x^2/\sigma^2)\mathbbm{1}(J^{**} > 1) \\
    &+ \mathbbm{1}\bigg(\frac{x}{4 (\tilde C + 1)} \leq 2\kappa\varepsilon^*\bigg) + C'' \exp(-C'''x^2/\sigma^2)\mathbbm{1}(J^* > 1),
\end{align*}
where $\varepsilon^{**} = \varepsilon_{J^{**}}$. 



Integrating the tail bound as before we have
\begin{align*}
    \mathbb{E} \|\mu - \nu^*\|^2 
    & \leq C''' \varepsilon^{**2} + C^{''''} \sigma^2\exp(-C''\varepsilon^{**2}/\sigma^2)\mathbbm{1}(J^{**} > 1) \\
    & + C'''''\varepsilon^{*2} +C^{''''''} \sigma^2\exp(-\underline{\underline{\underline{C}}}\varepsilon^{*2}/\sigma^2)\mathbbm{1}(J^{*} > 1).
\end{align*}
Now $\varepsilon^{**2}/\sigma^2$ is bigger than a constant ($32 \log 2$) otherwise $J^{**} = 1$, and similarly for $\varepsilon^*$ and $J^*$. Hence the above is smaller than $\tilde C \max(\varepsilon^{*2}, \varepsilon^{**2})$ for some absolute constant $\tilde C$. Finally observe that $\varepsilon^{*}$ is smaller than $2\varepsilon^{***}$ which is defined as the infimum $\varepsilon$ such that
\begin{align*}
    \frac{\varepsilon^2}{\sigma^2} > 32\log M^{\operatorname{loc}}\bigg(\varepsilon \frac{2 (C+1)}{( C-2)}\bigg) \vee 32\log 2,
\end{align*}
since $M^{\operatorname{loc}}(x) \geq M_{B(\nu, 2^N)\cap K}^{\operatorname{loc}}(x)$ for any $x$. In addition, since $M^{\operatorname{loc}}\bigg(\varepsilon \frac{2 ( C+1)}{( C-2)}\bigg) \geq M^{\operatorname{loc}}\bigg(\varepsilon \frac{4 ( \tilde C+1)}{( \tilde C-2)}\bigg)$ (which follows since we have $\varepsilon \frac{2(\tilde C + 1)}{\tilde C -2} > \varepsilon \frac{C + 1}{ C - 2}$ and $c = 4(\tilde C+ 1) = 2(C + 1)$) we conclude that $2\varepsilon^{***} \geq \varepsilon^{**}$ . This completes the proof.
\end{proof}

\begin{remark}\label{not:closed:extension}
In this remark we explain how to fix the above proof for the case when the set $K$ is not necessarily closed. The issue lies in that in this case the estimators $\nu_m$ may not belong to the set $K$, and therefore we might not have a bound on the entropies localized at these points. The fix is simple. Since each estimator $\nu_m \in \overline K$ (where $\overline K$ is the closure of $K$), we can consider a sequence of points $\{\nu_{mi}\}_{i \in \NN}$ which has $\nu_m$ as its limiting point and each point $\nu_{mi} \in K$. For instance select each $\nu_{mi} = \alpha_i \nu + (1-\alpha_i)\nu_m$ for some appropriately chosen $\alpha_i$ which converges to $0$ (e.g. $\alpha_i = 1/i$). Note that this preserves measurability, and the selected $\nu_{mi}$ still belong to a compact set, yet are now points in the set $K$. 
Next instead of $\cE = \{\nu_m\}_{m \in \NN}$ in Algorithm \ref{test:v2} consider the countable set $\cE' = \{\nu_{mi}\}_{(m,i) \in \cI}$ where \begin{align*}
    \cI = \{(1,1), (1,2), (2,1), (3,1), (2,2), (1,3), (1,4), (2,3), (3,2), (4,1), (5,1), (4,2),\ldots,\}
\end{align*} (i.e. this sequence is usually used to prove that the rational numbers are countable). Note that inequality \eqref{to:modify:thm:unbounded} continues to hold since now $\nu_N$ is a limiting point of $\cE'$. Hence all arguments of the proof will remain valid. 

\end{remark}


\begin{proposition}\label{proposition:minimax:rate:unbounde:set}
Define $\varepsilon^*$ as $\sup \{\varepsilon: \varepsilon^2/\sigma^2 \leq \log M^{\operatorname{loc}}(\varepsilon)\}$, where $c$ in the definition of local entropy is a  sufficiently large absolute constant. Then the minimax rate is given by $\varepsilon^{*2}$ up to absolute constant factors.
\end{proposition}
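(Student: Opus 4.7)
The plan is to combine the lower bound from Section~\ref{lower:boubd:section:no:constnat} with Theorem~\ref{unbounded:thm}, and to show that the two characterizations of $\varepsilon^*$ (the one here and the one in Theorem~\ref{unbounded:thm}) agree up to absolute constant factors. The argument will mirror the structure of Theorem~\ref{minimax:rate:thm} for the bounded case, but simplified because the $\diam(K)^2$ truncation is absent and the $\vee\, \log 2$ terms are automatically satisfied.

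For the lower bound, I would take $\delta = \varepsilon^*/4$ and verify the hypothesis of Fano's inequality stated at the top of Section~\ref{main:results:section}. By the definition of $\varepsilon^*$ together with Lemma~\ref{simple:lemma:monotone},
\[
\log M^{\operatorname{loc}}(\delta)\ \geq\ \log M^{\operatorname{loc}}(\varepsilon^*)\ \geq\ \frac{\varepsilon^{*2}}{\sigma^2}\ =\ 16\,\frac{\delta^2}{\sigma^2}\ >\ 4\cdot \frac{\delta^2}{2\sigma^2}.
\]
The remaining requirement $\log M^{\operatorname{loc}}(\delta) > 4\log 2$ follows from the observation made at the start of Section~\ref{lower:boubd:section:no:constnat}: because $K$ is unbounded, it contains a ray, so for any $\delta>0$ we can place $\exp(4\log 2)$ equispaced points along a diameter of $B(\theta,\delta)\cap K$ for $\theta$ on the ray (for $c$ sufficiently large). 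Fano's inequality then yields a lower bound of order $\delta^2 \asymp \varepsilon^{*2}$.

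For the upper bound, let $\varepsilon^{**}$ denote the smallest solution of~\eqref{inequality:unbounded:case}; Theorem~\ref{unbounded:thm} gives a minimax upper bound of order $\varepsilon^{**2}$, so it suffices to prove $\varepsilon^{**} \lesssim \varepsilon^*$. Set $\delta = 2\sqrt{32}\,\varepsilon^*$. Since $2\varepsilon^* > \varepsilon^*$ lies outside the set defining $\varepsilon^*$, we have $(2\varepsilon^*)^2/\sigma^2 \geq \log M^{\operatorname{loc}}(2\varepsilon^*)$, hence by Lemma~\ref{simple:lemma:monotone}
\[
\frac{\delta^2}{\sigma^2}\ =\ 32 \cdot \frac{(2\varepsilon^*)^2}{\sigma^2}\ \geq\ 32\,\log M^{\operatorname{loc}}(2\varepsilon^*)\ \geq\ 32\,\log M^{\operatorname{loc}}\!\left(\delta\,\frac{c}{c/2-3}\right),
\]
where the last step uses that $2\sqrt{32}\cdot c/(c/2-3) \geq 2$ for large $c$. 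This verifies the first half of~\eqref{inequality:unbounded:case}; the $32\log 2$ term is handled by Remark~\ref{remark:no:constnant:upper:bound}, or directly by noting $\varepsilon^{*2}/\sigma^2 \geq 4\log 2$ (again because $K$ contains a ray, so any small $\varepsilon$ belongs to $\{\varepsilon:\varepsilon^2/\sigma^2\leq \log M^{\operatorname{loc}}(\varepsilon)\}$, forcing $\varepsilon^* \geq 2\sigma\sqrt{\log 2}$). Therefore $\delta$ satisfies~\eqref{inequality:unbounded:case} and consequently $\varepsilon^{**}\leq \delta \lesssim \varepsilon^*$.

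The only real subtlety, and what I would flag as the main point requiring care, is keeping the two different constants $c$ straight — the one appearing in $\varepsilon^*$ here, and the one threaded through Algorithm~\ref{test:v2} and Theorem~\ref{unbounded:thm} (which involves the ratio $c/(c/2-3)$). As in the bounded case treatment in Theorem~\ref{minimax:rate:thm}, the argument is robust to mild rescalings of $c$ provided $c$ is chosen sufficiently large, which is exactly the regime Definition~\ref{local:entropy:def} permits; no additional packing-set construction is needed beyond what has already been done.
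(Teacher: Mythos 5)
Your proposal is correct and follows essentially the same route as the paper's own proof: the lower bound via $\delta=\varepsilon^*/4$ and the monotonicity of $\log M^{\operatorname{loc}}$, with the $\log 2$ term absorbed by the ray argument, and the upper bound via $\delta = 2\sqrt{32}\,\varepsilon^*$ (the paper's $C=32$) combined with Remark~\ref{remark:no:constnant:upper:bound}. No substantive differences.
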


\begin{proof}
For $\delta^* := \varepsilon^*/4$ we have $\log M^{\operatorname{loc}}(\delta^*) \geq\log M^{\operatorname{loc}}(\varepsilon^*) \geq  \varepsilon^{*2}/\sigma^2 = 16\delta^{*2}/\sigma^2$ and so this implies the sufficient condition for the lower bound (note that here we don't have a constant $4 \log 2$ per the comment in Section \ref{lower:boubd:section:no:constnat}). 

On the other hand we know that for a constant $C > 1$: 
\begin{align*}
    4 C \varepsilon^{*2}/\sigma^2 \geq C \log M^{\operatorname{loc}}(2\varepsilon^*) \geq C \log M^{\operatorname{loc}}(2\varepsilon^*\sqrt{C}) \geq C \log M^{\operatorname{loc}}\bigg(2\varepsilon^*\sqrt{C}\frac{c}{c/2-3}\bigg),  
\end{align*}
and so setting $\delta = 2 \varepsilon^* \sqrt{C}$ we obtain that 
\begin{align*}
    \delta^2/\sigma^2 \geq C \log M^{\operatorname{loc}}\bigg(\delta\frac{c}{c/2-3}\bigg).
\end{align*}
Plugging in $C = 32$ grants the requirement of Remark \ref{remark:no:constnant:upper:bound}, which completes the proof.

\end{proof}

\section{Discussion} \label{discussion:section}

In this paper we studied the minimax rate of the Gaussian sequence model under convex constraints. We proposed a method which is minimax optimal up to constant factors for any bounded convex set $K$, and an extension of the method which is minimax optimal for unbounded sets provided that $\sigma^2$ is known. Unfortunately, our algorithm is not computationally tractable. A natural open question is whether there exist computationally feasible general schemes which achieve the minimax rate for any set $K$. In addition, it is clear that the algorithm we proposed in this paper has something in common with the constrained LSE, as at each step it is looking for points which are closest to the observed point $Y$. It will be interesting if this connection is studied more closely --- in particular if there exist sufficient conditions for $K$ under which the two estimators are sufficiently close. Furthermore, throughout the paper we assumed that the model is well-specified, i.e., that $\mu \in K$. In future work we would like to see whether the techniques proposed here can capture the misspecified case. Another interesting open question is whether one can borrow ideas from this analysis to study the minimax risk under different loss functions, such as $\ell_p$ norms e.g. The biggest roadblock in terms of the upper bound that we currently see is extending Lemma \ref{important:lemma} to this more general setting. Finally an exciting question that remains is whether knowledge of $\sigma^2$ is necessary for the unbounded sets case. Our conjecture is that this is not the case, but at the moment we can only guarantee minimaxity by aggregating bounded estimators for which the knowledge of $\sigma^2$ seems to be required.

\section{Acknowledgements}

The author is grateful to Siva Balakrishnan for helpful discussions and for pointing him to the relevant papers by Li Zhang, to Ramon van Handel for enlightening discussions on entropy numbers, and to Larry Wasserman for encouragements. Thanks are also due to Shamindra Shrotriya who helped with plotting Figure \ref{fig:diagram}. Furthermore, the author would like to thank an AE and three anonymous referees for their insightful suggestions which greatly improved the presentation of this manuscript. The author was partially supported by grant NSF DMS-2113684.

\newpage

\appendix

\section{Finite Step Algorithm in the Presence of a Lower Bound of $\sigma$}\label{appendix:A}

The notation in this section is identical to the one used in Section \ref{upper:bound:section}.

\begin{algorithm}
\SetKwComment{Comment}{/* }{ */}
\caption{Upper Bound Algorithm with Finite Steps Given a Lower Bound on $\sigma$}\label{test:finite:version}
\KwInput{A point $\nu^* \in K$, $\overline J$ specified in Theorem \ref{upper:bound:rate:fininte}}
$k \gets 1$\;
$\Upsilon \gets [\nu^*]$ \Comment*[r]{This array is needed solely in the proof and is not used by the estimator}
\For{$k \leq \overline J$} {
    Take a $\frac{d}{2^{k}(C+1)}$ maximal packing set $M_k$ of the set $B\big(\nu^*, \frac{d}{2^{k-1}}\big) \cap K$ \Comment*[r]{The packing sets should be constructed prior to seeing the data}
    $\nu^* \gets \argmin_{\nu \in M_k} \|Y - \nu\| $ \Comment*[r]{Break ties by taking the point with the least lexicographic ordering}
    $\Upsilon$.append$(\nu^*)$\;
    $k \gets k + 1$\;
}
\Return{$\nu^*$}  
\end{algorithm}

\begin{theorem} \label{upper:bound:rate:fininte} Suppose $\underline \sigma$ is a known lower bound on $\sigma$. Let $\overline J$, be defined as the maximum integer $J$ such that
\begin{align}\label{upper:bound:suff:cond:fininte}
    \frac{\varepsilon_J^2}{\underline \sigma^2} > 16\log M^{\operatorname{loc}}\bigg(\varepsilon_J \frac{c}{(c/2-3)}\bigg) \vee 16\log 2,
\end{align}
where $\varepsilon_J := \frac{d (c/2 - 3)}{2^{J -2}c}$, and let $\overline J = 1$ if no such integer exists. Then estimator from Algorithm \ref{test:finite:version} returns a vector $\nu^*$ which satisfies the following property
\begin{align*}
    \mathbb{E} \|\mu - \nu^*\|^2 \leq \bar C \varepsilon^{*2},
\end{align*}
for some universal constant $\bar C$. Here $\varepsilon^*$ is the same as the one defined in equation \eqref{upper:bound:suff:cond} in Theorem \ref{upper:bound:rate}.
\end{theorem}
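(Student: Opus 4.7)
The plan is to reduce Theorem~\ref{upper:bound:rate:fininte} to Theorem~\ref{upper:bound:rate} via a single monotonicity observation, since Algorithm~\ref{test:finite:version} differs from Algorithm~\ref{test} only by truncation. The key remark is that $\underline{\sigma}\leq\sigma$ implies $\varepsilon_J^2/\underline{\sigma}^2\geq \varepsilon_J^2/\sigma^2$ for every $J$, so every $J$ satisfying \eqref{upper:bound:suff:cond} also satisfies \eqref{upper:bound:suff:cond:fininte}. Taking maxima on both sides yields $\overline{J}\geq J^*$, where $J^*$ is as in Theorem~\ref{upper:bound:rate}.

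Because the packing sets are constructed prior to seeing the data and depend only on $K$ and the past iterates (never on $\sigma$), the array $\Upsilon_1,\Upsilon_2,\ldots,\Upsilon_{\overline{J}+1}$ produced by Algorithm~\ref{test:finite:version} is exactly the initial segment of the array produced by Algorithm~\ref{test}. In particular, the telescoping probability bound \eqref{important:telescoping:bound:which:is:true} from the proof of Theorem~\ref{upper:bound:rate} applies verbatim at level $J=J^*$, giving for $J^*\geq 2$
\begin{align*}
    \mathbb{P}\bigg(\|\mu-\Upsilon_{J^*}\|>\frac{d}{2^{J^*-1}}\bigg)\leq 2\exp\bigg(-\frac{\varepsilon^{*2}}{16\sigma^2}\bigg).
\end{align*}
Since $J^*\leq \overline{J}$, the iterate $\Upsilon_{J^*}$ is actually computed by Algorithm~\ref{test:finite:version}, and the same Cauchy-sequence estimate used in the footnote of Algorithm~\ref{test} gives $\|\Upsilon_{\overline{J}+1}-\Upsilon_{J^*}\|\leq d/2^{J^*-2}$ deterministically. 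Triangle inequality then yields, on the high-probability event above,
\begin{align*}
    \|\mu-\Upsilon_{\overline{J}+1}\|\leq \frac{d}{2^{J^*-1}}+\frac{d}{2^{J^*-2}}=\frac{3d}{2^{J^*-1}}=3\varepsilon^{*}\cdot\frac{C+1}{C-2},
\end{align*}
which is precisely the deviation bound derived in Theorem~\ref{upper:bound:rate} for the infinite-iteration estimator. The corner case $J^*=1$ is handled by the same diameter-based argument used in Theorem~\ref{upper:bound:rate}.

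From this point onward the derivation copies the final portion of Theorem~\ref{upper:bound:rate} word for word: one extends the tail bound to arbitrary $x\geq \varepsilon^*$ using the geometric progression of $\{\varepsilon_J\}$ together with monotonicity of $x\mapsto \exp(-C'x^2/\sigma^2)$, and then integrates the tail $\mathbb{E}\|\mu-\hat{\nu}\|^2=\int_0^\infty 2x\,\mathbb{P}(\|\mu-\hat{\nu}\|\geq x)\,dx$ to produce the announced $O(\varepsilon^{*2})$ bound, using that $\varepsilon^{*2}/\sigma^2$ exceeds an absolute constant whenever $J^*>1$. No step is expected to be delicate: the entire argument is a bookkeeping exercise leveraging the single monotonicity fact $\overline{J}\geq J^*$, so the main (minor) obstacle is merely to verify that the indexing line up, i.e.\ that $\Upsilon_{J^*}$ really is reached before the algorithm terminates, which is guaranteed exactly by $\overline{J}\geq J^*$.
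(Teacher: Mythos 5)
Your proposal is correct and follows essentially the same route as the paper's own proof: the paper likewise observes that $\underline{\sigma}\leq\sigma$ forces $J^*\leq\overline{J}$, applies the identical telescoping bound to the (shared) initial segment of iterates, and replaces $\|\nu^*-\Upsilon_J\|$ by $\|\Upsilon_{\overline{J}}-\Upsilon_J\|\leq d/2^{J-2}$ via the Cauchy-sequence estimate before integrating the tail exactly as in Theorem \ref{upper:bound:rate}. The only discrepancy is a harmless off-by-one in which array entry you call the output, which does not affect the bound.
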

\begin{proof}
Combining the results of Lemma \ref{most:importnant:lemma} (with $c = 2(C+1)$ where $c$ is the constant from the definition of local packing entropy) and Lemma \ref{simple:lemma:monotone} we can conclude that 
\begin{align*}
    \mathbb{P}(\|\mu - \Upsilon_{J}\| > \frac{d}{2^{J-1}}) 
    &\leq M^{\operatorname{loc}}\bigg(\frac{d}{2^{J - 2}}\bigg) \sum_{j = 1}^{J-1}\exp\bigg(-\frac{(C-2)^2d^2}{(2^{2j}(C+1)^2)8\sigma^2}\bigg)\\
    & \leq M^{\operatorname{loc}}\bigg(\frac{d}{2^{J - 2}}\bigg) a (1 + a^{4-1} + a^{16-1} + \ldots)\mathbbm{1}(J > 1)\\
    & \leq M^{\operatorname{loc}}\bigg(\frac{d}{2^{J - 2}}\bigg) \frac{a}{1-a} \mathbbm{1}(J > 1),
\end{align*}
where for brevity we put \begin{align*}a =  \exp\bigg(\frac{-(C-2)^2d^2}{(2^{2(J - 1)}(C+1)^2)(8\sigma^2)}\bigg),
\end{align*}
and we are assuming that $ a < 1$. So if one sets $\varepsilon_J = \frac{(C-2)d}{2^{J - 1}(C+1)}$, we have that if $\varepsilon_J^2/(8\sigma^2) > 2 \log M^{\operatorname{loc}}\bigg(\varepsilon_J \frac{2 (C+1)}{(C-2)}\bigg)$ and $a = \exp(-\varepsilon_J^2/(8\sigma^2)) < 1/2$, the above probability will be bounded from above by $2\exp(-\varepsilon_J^2/(16\sigma^2))$. Since $2 \log M^{\operatorname{loc}}\bigg(\varepsilon_J \frac{2 (C+1)}{(C-2)}\bigg) < 2 \bigg(\log 2 \vee \log M^{\operatorname{loc}}\bigg(\varepsilon_J \frac{2 (C+1)}{(C-2)}\bigg)\bigg)$ this condition is implied when 
\begin{align}\label{suff:condition:epsJ:finite}
    \frac{\varepsilon_J^2}{\sigma^2} > 16\log M^{\operatorname{loc}}\bigg(\varepsilon_J \frac{2 (C+1)}{(C-2)}\bigg) \vee 16\log 2.
\end{align} 

By the triangle inequality we have that 
\begin{align}\label{mu:upislon:ineq:finite}
    \|\nu^* - \mu\| = \|\Upsilon_{\overline J} - \mu\| \leq \|\Upsilon_{\overline J} - \Upsilon_J\| + \|\Upsilon_J - \mu\| \leq 3\varepsilon_J\frac{C+1}{C-2},
\end{align} with probability at least $1 - 2\exp(-\varepsilon_J^2/(16\sigma^2))$ which holds for all $J$ satisfying \eqref{suff:condition:epsJ:finite} which include $\overline J$. Here we want to clarify that the last inequality in \eqref{mu:upislon:ineq:finite} follows from the fact that $\|\Upsilon_{\overline J} - \Upsilon_J\| \leq d/2^{J-2}$ when $\overline J \geq J$, as seen when we verified that $\Upsilon$ forms a Cauchy sequence.
Let $J^*$ be selected as the maximum $J$ such that \eqref{suff:condition:epsJ:finite} holds, or otherwise if such $J$ does not exist $J^* = 1$. Observe that the so defined $J^* \leq \overline J$, since $\underline \sigma \leq \sigma$ (which also holds in the case when $\overline J = 1$, because this implies $J^* = 1$). Let $\kappa = 3 \frac{C+1}{C-2}$, $\underline{C} = 2$ and $C' = \frac{1}{16}$. We have established that the following bound holds:
\begin{align*}
    \mathbb{P}(\|\mu - \nu^*\| > \kappa \varepsilon_J) \leq \underline C \exp(-C'\varepsilon_J^2/\sigma^2) \mathbbm{1}(J > 1) \leq \underline C \exp(-C'\varepsilon_J^2/\sigma^2) \mathbbm{1}(J^* > 1),
\end{align*}
for all $1 \leq J \leq J^*$, where this bound also holds in the case when $J^* = 1$ by exception. Observe that we can extend this bound to all $J \in \mathbb{Z}$ and $J \leq J^*$, since for $J < 1$ we have $\kappa \varepsilon_J \geq 6 d$ and so 
\begin{align*}
    \mathbb{P}(\|\mu - \nu^*\| > \kappa \varepsilon_J) \leq 0 \leq \underline C \exp(-C'\varepsilon_J^2/\sigma^2) \mathbbm{1}(J^* > 1).
\end{align*}
Now for any $\varepsilon_{J-1} > x \geq \varepsilon_{J}$ for $J \leq J^*$ we have that 
\begin{align*}
    \mathbb{P}(\|\mu - \nu^*\| > 2 \kappa x) & \leq \mathbb{P}(\|\mu - \nu^*\| \geq \kappa \varepsilon_{J-1}) \leq \underline C \exp(-C'\varepsilon_{J-1}^2/\sigma^2) \mathbbm{1}(J^* > 1)\\
    & \leq \underline C \exp(-C'x^2/\sigma^2)\mathbbm{1}(J^* > 1),
\end{align*}
where the last inequality follows due to the fact that the map $x \mapsto \underline C \exp(-C'x^2/\sigma^2)$ is monotonically decreasing for positive reals. We will now integrate the tail bound:
\begin{align*}
    \mathbb{P}(\|\mu - \nu^*\| \geq 3 \kappa x) \leq \mathbb{P}(\|\mu - \nu^*\| > 2 \kappa x) \leq \underline C \exp(-C'x^2/\sigma^2) \mathbbm{1}(J^* > 1),
\end{align*}
which holds true for $x \geq \varepsilon^*$ (for $\varepsilon^* > 0$; if $\varepsilon^* = 0$ we know $\sigma = 0$ and therefore $\underline{\sigma} = 0$ so we need to run the algorithm ad infinity (or simply output $Y$ in that case)), where $\varepsilon^* = \varepsilon_{J^*} = \frac{(C-2)d}{(C+1) 2^{J^* - 1}}$, always (since even if $J^* = 1$ by exception, this bound is still valid).

We have
\begin{align*}
    \mathbb{E} \|\mu - \nu^*\|^2 & = \int_{0}^{\infty} 2 x \mathbb{P}(\|\mu - \nu^*\| \geq x) dx \\
    & \leq C''' \varepsilon^{*2} + \int_{3\kappa\varepsilon^*}^{\infty} 2 x \underline C \exp(-C''x^2/\sigma^2) \mathbbm{1}(J^* > 1) dx \\
    & = C''' \varepsilon^{*2} + C^{''''} \sigma^2\exp(-C'''''\varepsilon^{*2}/\sigma^2)\mathbbm{1}(J^* > 1).
\end{align*}
Now $\varepsilon^{*2}/\sigma^2$ is bigger than a constant ($16 \log 2$) otherwise $J^* = 1$. Hence the above is smaller than $\bar C \varepsilon^{*2}$ for some absolute constant $\bar C$.
\end{proof}

\bibliographystyle{abbrvnat}
\bibliography{ref}

\end{document}